\documentclass[reqno, 11pt,letter]{amsart}
\allowdisplaybreaks
\usepackage{graphicx,  enumitem, amsmath, amssymb,amsthm,hyperref,amscd,xcolor}
\usepackage{mathabx,manfnt,braket,mathrsfs, comment,mathtools,bm}
\usepackage{tikz}
\usepackage{cancel}
\usetikzlibrary{arrows, matrix,decorations.pathmorphing}
\usepackage[utf8]{inputenc}
\makeatletter

\makeatother



\newtheorem{thm}{Theorem}[section]
\newtheorem{prop}{Proposition}[section]

\newtheorem{lem}[prop]{Lemma}
\newtheorem{cor}[prop]{Corollary}
\theoremstyle{definition}

\newtheorem*{rem}{Remark}

\numberwithin{equation}{section}

\oddsidemargin = 0.5cm \evensidemargin= 0.5cm \textwidth=15cm
\topmargin=0.5cm \textheight=21.5cm

\pagestyle{myheadings}

\newcommand{\rot}{\mathrm{R}}
\newcommand{\tra}{\mathrm{M}}
\newcommand{\all}[1]{\left(#1\right)}
\newcommand{\norm}[1]{\left\Vert#1\right\Vert}
\newcommand{\abs}[1]{\left\vert#1\right\vert}
\newcommand{\cx}{{\mathbb{C}}}
\newcommand{\C}{{\mathbb{C}}}
\newcommand{\rl}{{\mathbb{R}}}
\newcommand{\T}{\mathbb{T}}
\newcommand{\D}{\mathbb{D}}
\newcommand{\dd}{\bm{D}}
\newcommand{\db}{\bm{\overline{D}}}
\newcommand{\DD}{\mathscr{D}}
\newcommand{\OO}{\mathscr{O}}

\newcommand{\Sl}{\sigma_{\lambda}}

\newcommand{\Slalpha}{\sigma_{\lambda_{\alpha}}}
\newcommand{\ppi}{\boldsymbol{\pi}}
\newcommand{\pialphasigma}{\ppi_{\alpha}^{\sigma}}
\newcommand{\pialpha}{\ppi_{\alpha}}
\newcommand{\om}{\mathscr{O}_M}

\newcommand{\N}{\mathbb{N}}
\newcommand{\Z}{\mathbb{Z}}

\newcommand{\ol}{\overline}

\newcommand{\dbar}{\ol\partial}

\newcommand{\wt}{\widetilde}
\newcommand{\wh}{\widehat}


\newcommand\ipr[1]{\left\langle #1 \right\rangle}


\setcounter{tocdepth}{1}

\title{Power series as Fourier Series}
\author{Debraj Chakrabarti}
\address{Department of Mathematics, Central Michigan University, Mount Pleasant,  MI 48859,  USA}
\email{chakr2d@cmich.edu}

\author{Anirban Dawn}
\address{Department of Mathematics, The University of Tampa, Tampa,  FL 33606,  USA}
\email{adawn@ut.edu}

\thanks{Debraj Chakrabarti was partially supported by  Simons Foundation Collaboration Grant number 706445.}
\subjclass[2010]{32A05,22D12}
\begin{document}
	\begin{abstract} An abstract theory of Fourier series in locally convex topological vector spaces is developed.  An analog of  Fej\'{e}r's theorem is proved for these series. The theory 
		is applied to distributional solutions of Cauchy-Riemann equations to recover basic results of complex analysis. Some classical results of function theory are also shown to be consequences
		of the series expansion. 
\end{abstract}

\maketitle

\section{Introduction}

\subsection{Motivation} Two types of series expansion ubiquitous in mathematics  are the power series  of an  analytic function, and the Fourier series of an integrable function on 
the circle. In the complex domain, well-known theorems of 
elementary complex analysis guarantee  the existence of locally uniformly convergent
power series expansions of holomorphic functions in domains with ample symmetry such as disks and annuli,
where ``holomorphic"  can be taken in the sense of Goursat,  i.e.,  the function is complex-differentiable at each point. On the other hand, the 
convergence of a  Fourier series is a subtle matter, and its study has led to many developments 
in  analysis (see \cite{zygmund}). The close connection between these two types of outwardly different series expansions is a recurring theme in many areas of 
classical analysis, e.g., in the theory of Hardy spaces. 

It is however not difficult to see  that  at a certain level of abstraction, Fourier series and power series are in fact two examples of the same 
phenomenon,   the representation theory of the circle group. The aim of this  article is to take this idea seriously, and use it  to recapture some basic results of complex analysis. The
take-home message is that many properties of holomorphic functions, such as the almost-supernatural regularity phenomena are profitably thought of as expressions of 
symmetry, more precisely the invariance of certain locally convex spaces under the Reinhardt action of the torus group on $\cx^n$.
It is hoped that the  point of view taken here has pedagogical as well as conceptual value, and will be of interest to students  of complex analysis.

\subsection{Abstract Fourier series} 

We begin in Section~\ref{sec-abstract} with an account of Fourier series associated to a continuous representation of the
$n$-dimensional torus $\T^n$ on  locally convex topological vector spaces, using some ideas of   \cite{johnson}.
This provides the  unifying language of sufficient generality  to encompass both classical Fourier expansions and the power series representations of complex analysis. Even at this very general and ``soft" level,
one can establish a version of 
 Fej\'{e}r's theorem on the summability in the Ces\`{a}ro sense  (Theorem ~\ref{thm-fejer}), which can be thought of as a ``completeness" statement for the holomorphic monomials.

\subsection{ Analyticity of holomorphic distributions} Using  the abstract framework of Section~\ref{sec-abstract}, we recapture in Section~\ref{sec-distn}, in a novel way,
some of  the basic classical facts about holomorphic functions.
We show that  on a Reinhardt domain,  a distribution which satisfies the Cauchy-Riemann equations  (which we call a \emph{holomorphic distribution})  has a complex power series representation that converges uniformly along with 
derivatives of all orders
on compact subsets of  a ``relatively complete" log-convex Reinhardt domain (see Theorem~\ref{thm-distn} below), and thus a holomorphic distribution
 is a function, in fact an analytic function.   Traditionally, to prove 
such an assertion, one would start by showing that a  holomorphic distribution  is actually a (smooth) function. This can be done either by ad hoc arguments for the Laplacian going back to Weyl (see \cite{weyl1940}), or in a more general way, by noticing  that fundamental solution $\frac{1}{\pi z}$ of the Cauchy-Riemann operator $\frac{\partial}{\partial \ol{z}}$
 is smooth, in fact 
real-analytic, away from the singularity at the origin.  It then follows by standard arguments about convolutions (see \cite[Theorem 4.4.3]{horvol1}), that   any distributional solution of $\frac{\partial}{\partial \ol{z}} u=0$ is real-analytic. 
Therefore, a holomorphic  distribution is  a 
holomorphic function in the sense of Goursat, and  classical results of elementary complex analysis give the power series expansion 
via the Cauchy integral formula. The extension of the domain of convergence to the envelope of holomorphy 
 can be obtained by convexity arguments
(see \cite{range}).

While this classical argument  has the admirable advantage of placing holomorphic functions in the context of solutions of hypoelliptic 
equations, 
it also has the shortcoming that  the crucial property of \emph{complex-analyticity} (and the associated Hartogs phenomenon in several variables) of holomorphic distributions
is proved not from an analysis of the action of a differential operator on distributions, but by falling back on the Cauchy integral formula.
After using the real-analytic hypoellipticity of $\frac{\partial}{\partial \ol{z}}$ to conclude that holomorphic distributions are real-analytic,
we discard all of this information, except that holomorphic functions are $\mathcal{C}^1$ and satisfy the Cauchy-Riemann equations in the classical sense.
 In our approach here, however, complex analyticity 
of holomorphic distributions is proved directly in a conceptually straightforward way, by expanding a holomorphic distribution on a Reinhardt domain 
in a Fourier series, and then showing that the resulting series (the Laurent series of the holomorphic function)  converges  in the
$\mathcal{C}^\infty$-topology, not only on the original set where the distribution was defined, but possibly on a larger domain,
thus underlining the fact that Hartogs phenomenon can be thought of as a regularity property of the solutions of the same nature 
as smoothness. Our proof also clearly locates the origin of the remarkable regularity of  holomorphic distributions in
\begin{itemize}
    \item[(a)] the invariance properties of the space of holomorphic distributions under Reinhardt rotations and translations,
    \item[(b)] symmetry and convexity 
properties of the Laurent monomial functions $z\mapsto z_1^{\alpha_1}\dots z_n^{\alpha_n},$\\
$ \alpha_j\in \Z$ and
\item[(c)] the fact that radially symmetric
holomorphic distributions are constants.
\end{itemize}
It is also interesting that  the fact that $\frac{1}{\pi z}$ is 
a fundamental solution of the Cauchy-Riemann operator, which is  key to many results of complex analysis including the Cauchy integral formula,
does not play any role in  our approach.  

The method of proving analyticity via Fourier expansion can be used in other contexts. For example, replacing the  representation theory of $\T^n$ by that of the special orthogonal group $SO(n)$,
the method can be used to show that a harmonic distribution in a ball of $\rl^n$ is  in fact a real analytic function and 
admits an expansion in solid harmonics (see, e.g., \cite[pp. 316-317]{cour-hilb}), which converges uniformly along with all derivatives on compact subsets of the ball.  Similarly, one 
can obtain the  Taylor/Laurent expansion of a monogenic function of a Clifford-algebra variable in  ``spherical monogenics", the analogs for functions of a Clifford-algebra variable of 
the monomial functions $z\mapsto z^n, n\in \Z$ (see \cite{brakx}).

\subsection{Analyticity of continuous holomorphic functions} While the theory of generalized functions forms the natural context for solution of linear partial differential equations such as the Cauchy-Riemann equations,
for aesthetic and pedagogical reasons   it is natural to ask whether it is possible to 
develop the Fourier approach to power series, as outlined in Section~\ref{sec-distn}, 
using only classical notions of functions as continuous mappings, and  derivatives as 
limits of difference quotients, and without anachronistically invoking distributions.  We accomplish this in the last Section~\ref{sec-morera} of this paper. We start from 
the assumption that a continuous function on the complex complex plane satisfies the hypothesis of the Morera theorem, and show directly that it is complex-analytic \emph{without any recourse to 
the Cauchy integral representation formula} (which, being a case of Stoke's theorem,  requires the differentiability of the function, at least at each point). Not unexpectedly, one of the steps of the proof uses the classical triangle-division used 
in the standard proof of the Cauchy theorem for triangles.

\subsection{Acknowledgements} The first author would like to thank Luke Edholm and Jeff McNeal for many interesting conversations about the topic of power series. He would also like 
to thank the students of MTH 636 and MTH 637 at Central Michigan University  over the years for their many questions, which led him to think about the true significance of power series expansions.

Sections~\ref{sec-abstract} and \ref{sec-missing} of this paper are based on part of the Ph.D. thesis of the second author under the supervision of the first author. The second author would also like to thank his other committee members, Dmitry Zakharov and Sonmez Sahutoglu for their support. Other results from the thesis have appeared in the paper \cite{dawn}. 
\section{Fourier Series in Locally Convex spaces}\label{sec-abstract}
\subsection{Nets, series  and integrals in LCTVS} We begin by recalling some notions and facts about functional analysis in topological vector spaces. See the textbooks \cite{treves,rudin,bourbaki} for more details on these matters.

 Let $X$ be a locally convex Hausdorff topological vector space (we use the standard abbreviation \emph{LCTVS}). Recall that the topology of  $X$ can  also be described by prescribing the family of \emph{continuous seminorms} on $X$:
a net $\{x_j\}$ in $X$ converges to $x$, if and only if for each continuous seminorm $p$ on $X$, we have $p(x-x_j)\to 0$ as a net of real numbers.
In practice, we describe the topology of an LCTVS by  specifying a \emph{generating family} of seminorms (analogous to describing a topology by a subbasis):  a collection of continuous seminorms $\{p_k : k \in K\}$ on $X$ is said to \emph{generate the topology} of $X$ if for every continuous seminorm $q$ on $X$, there exists a finite subset $\{k_1,\dots, k_n\} \subset K$ and a $C > 0$ such that 
\begin{equation}\label{eq:1}
q(x) \leq C \cdot \max\{p_{k_1}(x), \dots, p_{k_n}(x)\}\quad \textrm{for all $x \in X$},
\end{equation}
and further, for every nonzero $x \in X$ there exists at least one $k\in K$ such that $p_k(x) \neq 0$ (this \emph{separating property} ensures that the topology of $X$ is Hausdorff).  Then clearly a net $\{x_j\}$ converges in $X$ if and only if  $p_k(x_{j} - x) \rightarrow 0$
for each $k\in K$.

Let $\Gamma$ be a directed set with order $\geq$. Recall that a net $\{x_{\alpha}\}_{\alpha \in \Gamma}$  in $X$ is said to be \emph{Cauchy } if for every $\epsilon > 0$ and every continuous seminorm $p$ on $X$, there exists $\gamma \in \Gamma$ such that whenever $\alpha, \beta \in \Gamma$ and $\alpha, \beta \geq \gamma$, we have  $p(x_{\alpha} - x_{\beta}) < \epsilon$.  The space $X$ is said to be \emph{complete} if every Cauchy net in $X$ converges.
Observe that  in the above definition we can use a generating family of seminorms rather than all continuous seminorms.

If $\{S_k\}_{k\in \N}$ is a sequence in a vector space, we can define the sequence of the corresponding \emph{Ces\`{a}ro means}
 $\{C_n\}_{n \in \N}$ by  $$	\displaystyle{	C_n= \frac{1}{n+1} \sum \limits_{k=0}^{n} S_k  }. $$The following is the analog for sequences 
 in LCTVS of an elementary fact well-known for sequences of numbers:
\begin{prop}\label{prop-cesaro}
	Let $\{S_k\}_{k \in \N}$ be a convergent sequence in an LCTVS $X$. Then  the sequence of Ces\`{a}ro means $\{C_n\}_{n \in \N}$ is 
	also convergent, and has the same limit.
\end{prop}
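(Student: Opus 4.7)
The plan is to mimic the classical numerical proof of Cesàro summability, but replace the absolute value by an arbitrary continuous seminorm. Since convergence in an LCTVS is characterized by $p(x_j - x) \to 0$ for every continuous seminorm $p$ (or equivalently for every seminorm in a generating family), it suffices to show that if $S_k \to S$ in $X$, then for each continuous seminorm $p$ we have $p(C_n - S) \to 0$.

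First I would use the linearity of the Cesàro averaging and the identity $S = \frac{1}{n+1}\sum_{k=0}^n S$ to write
\[
C_n - S = \frac{1}{n+1}\sum_{k=0}^{n}(S_k - S).
\]
Apply the seminorm $p$ and use subadditivity plus absolute homogeneity to obtain
\[
p(C_n - S) \leq \frac{1}{n+1}\sum_{k=0}^{n} p(S_k - S).
\]
This reduces the problem to the standard numerical statement for the nonnegative real sequence $a_k := p(S_k - S)$, which tends to $0$ by hypothesis.

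Next, given $\varepsilon > 0$, I would choose $N$ so that $a_k < \varepsilon/2$ for all $k \geq N$, and split the sum as
\[
\frac{1}{n+1}\sum_{k=0}^{N-1} a_k \;+\; \frac{1}{n+1}\sum_{k=N}^{n} a_k.
\]
The first (tail-free) piece is a fixed finite quantity divided by $n+1$ and therefore can be made smaller than $\varepsilon/2$ by choosing $n$ large enough; the second piece is bounded by $\frac{n-N+1}{n+1}\cdot \frac{\varepsilon}{2} \leq \varepsilon/2$. Combining, $p(C_n - S) < \varepsilon$ for all sufficiently large $n$, so $p(C_n - S) \to 0$, which is exactly what we needed.

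There is no real obstacle here; the only mild subtlety is noting that $p(S_k - S) \to 0$ follows automatically from $S_k \to S$ in $X$, so the reduction to the numerical Cesàro lemma is legitimate. The argument uses only seminorm subadditivity and homogeneity, not completeness of $X$, and the Hausdorff hypothesis is irrelevant to the convergence step — it would only enter if one wanted to assert uniqueness of the limit, which is already built into the statement via the assumed limit $S$ of $\{S_k\}$.
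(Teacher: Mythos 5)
Your proof is correct and follows essentially the same route as the paper: apply the seminorm to $C_n - S = \frac{1}{n+1}\sum_{k=0}^{n}(S_k - S)$, use subadditivity to reduce to the numerical Ces\`{a}ro lemma for $a_k = p(S_k - S)$, and split the sum into a fixed head (killed by the $\frac{1}{n+1}$ factor) and a small tail. Your closing remark that neither completeness nor the Hausdorff property is needed for the convergence step is a correct observation, consistent with the paper's statement and proof.
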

\begin{proof} Let $S= \lim\limits_{k\to\infty}S_k$.  If $p$ is a continuous seminorm on $X$ and $\epsilon > 0$, there is $N_1$ such that $p(S_k - S) < \epsilon/2$ for $k \geq N_1$. Set $s = \sum_{k=0}^{N_1} p(S_k - S)$. Then for $n> N_1$, we have
	
\begin{align*}
	p(C_n-S)= p\left(\frac{1}{n+1}\sum \limits_{k=0}^{n} S_k  - S\right)\leq \frac{1}{n+1}\sum\limits_{k=0}^np(S_k-S)< \frac{s}{n+1}+ \frac{n-N_1}{n+1}\cdot\frac{\epsilon}{2}.
\end{align*}
	Therefore, if we choose $N>N_1$ so large that $\frac{s}{n+1}< \frac{\epsilon}{2}$, then for $n\geq N$ we have $p(C_n-S)<\epsilon$, 
	so $ C_n\to S$ in $X$.
\end{proof}

For a  formal series $\displaystyle{\sum_{j=0}^{\infty} x_j}$ in an LCTVS $X$, convergence is defined in the usual way, i.e. the sequence of partial sums converges in $X$.   A formal sum 
${\sum_{\alpha \in \mathfrak{A}} x_{\alpha}}$ over a countable index set $\mathfrak{A}$, where $x_\alpha$ are vectors in an LCTVS $X$,
is said to be \emph{absolutely convergent} if there exists a bijection
$
\tau: \N \rightarrow \mathfrak{A}  
$
such that for every continuous seminorm $p$ on $X$, the series of non-negative real numbers
$\displaystyle{
\sum_{j=0}^{\infty} p (x_{\tau(j)})}
$
is  convergent (see  \cite{kadets1997}).  To check that a series is absolutely convergent, we only need to check the convergence 
of  the above series for seminorms in a fixed generating family. If $X$ is a locally compact Hausdorff space, absolute convergence in the Fréchet space $\mathcal{C}(X)$ 
of  continuous complex valued functions on $X$ is what is classically called \emph{ normal convergence} (see \cite[pp. 104 ff.]{remmert}). Absolute
convergence is typical for many spaces of holomorphic functions, e.g. in the space of holomorphic functions on a Reinhardt domain smooth up to the boundary (see \cite{dawn}).

  The following result, whose proof mimics the corresponding result for numbers,  shows that absolutely convergent series in 
LCTVS behave very much like absolutely convergent series of numbers:
\begin{prop} \label{prop-absolute}Let $X$ be a \emph{complete} LCTVS, and let  ${\sum_{\alpha \in \mathfrak{A}} x_{\alpha}}$  be an absolutely convergent series of elements of $X$.
	Then  the series is \emph{unconditionally convergent}: there is an $s\in X$ such that  for \emph{every} bijection $\theta: \N \rightarrow \mathfrak{A}$, the series $\sum_{j=0}^{\infty} x_{\theta(j)}$ converges in $X$ to $s$.  
\end{prop}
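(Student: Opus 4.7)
The plan is to first establish convergence of $\sum_j x_{\tau(j)}$ for the distinguished bijection $\tau$, and then show that every rearrangement converges to the same limit by a finite-set comparison argument.

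First I would check convergence along $\tau$. For each seminorm $p$ in a generating family, the numerical series $\sum_j p(x_{\tau(j)})$ is convergent by hypothesis, so its tails are small. Hence, for the partial sums $S_n = \sum_{j=0}^n x_{\tau(j)}$, the triangle inequality for $p$ gives $p(S_n - S_m) \le \sum_{j=m+1}^n p(x_{\tau(j)})$, making $\{S_n\}$ Cauchy in $X$. Completeness of $X$ then produces the limit $s \in X$.

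Next, given an arbitrary bijection $\theta: \N \to \mathfrak{A}$, the composition $\tau^{-1}\circ \theta$ is a permutation of $\N$. Since nonnegative series of reals rearrange to the same sum, $\sum_j p(x_{\theta(j)}) = \sum_j p(x_{\tau(j)}) < \infty$ for every generating seminorm $p$, so absolute convergence is itself unconditional in the scalar sense. Repeating the Cauchy argument, $T_n = \sum_{j=0}^n x_{\theta(j)}$ converges to some $s_\theta \in X$.

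The main step—and the one I expect to be the only subtle point—is to identify $s_\theta$ with $s$. Fix a seminorm $p$ and $\eps > 0$. Choose $N_1$ so large that $\sum_{j \ge N_1} p(x_{\tau(j)}) < \eps$, and set $F = \{\tau(0),\dots,\tau(N_1-1)\}\subset\mathfrak{A}$. Because $\theta$ is a bijection, there is $N_2$ such that $F \subset \{\theta(0),\dots,\theta(N_2-1)\}$. For $n \ge N_2$, the difference $T_n - S_{N_1-1}$ is the finite sum $\sum_{\alpha \in A_n} x_\alpha$ with $A_n = \{\theta(0),\dots,\theta(n)\} \setminus F \subset \mathfrak{A}\setminus F = \{\tau(j) : j \ge N_1\}$, so
\begin{equation*}
p(T_n - S_{N_1 - 1}) \le \sum_{\alpha \in A_n} p(x_\alpha) \le \sum_{j \ge N_1} p(x_{\tau(j)}) < \eps.
\end{equation*}
Combining this with the tail estimate $p(s - S_{N_1-1}) \le \sum_{j \ge N_1} p(x_{\tau(j)}) < \eps$ (obtained by letting $n \to \infty$ in $p(S_n - S_{N_1-1})$ and using continuity of $p$) yields $p(T_n - s) < 2\eps$ for all $n \ge N_2$. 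Since this holds for every seminorm $p$ in a generating family and every $\eps > 0$, $T_n \to s$, proving unconditional convergence with common limit $s$.
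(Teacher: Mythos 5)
Your proposal is correct and follows essentially the same route as the paper: a Cauchy-criterion argument from completeness to produce the sum $s$ along the distinguished bijection, followed by a finite-set comparison (absorbing the first $N_1$ terms of $\tau$ into an initial segment of $\theta$, leaving only a tail whose seminorm sum is small) to show every rearrangement converges to $s$. The only cosmetic differences are that you compare $T_n$ against the fixed truncation $S_{N_1-1}$ and the limit $s$ rather than against $S_n$ directly, and your second paragraph (separate convergence of $T_n$ to some $s_\theta$) is rendered redundant by the third.
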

The element $s\in X$   is naturally called the \emph{sum}  of the  series, and we write 
		$s=\sum_{\alpha \in \mathfrak{A}} x_{\alpha}$. 
\begin{proof}
By definition, there exists a bijection $\sigma: \N \rightarrow \mathfrak{A}$,  such that for each continuous seminorm $p$ on $X$, the series $\sum_{j=0}^{\infty} p(x_{\sigma(j)})$ converges. Let $y_j = x_{\sigma(j)}$ and $s_k = \sum_{j=0}^k y_j$. Since $\sum_{j=0}^{\infty} p(y_j)$ converges, for $\epsilon > 0$ there exists $N_0 \in \N$ such that whenever $m, \ell \in \N$ with $m \geq \ell \geq N_0$, $\sum_{j = \ell+1}^{m} p(y_j) < \epsilon$. Therefore for $m \geq \ell \geq N_0$,
\begin{equation}\label{eq:4}
	p(s_m - s_{\ell}) = p \Big (\sum_{j=\ell+1}^{m} y_j \Big ) \leq \sum_{j=\ell +1}^{m} p(y_j) < \epsilon.   \end{equation}
Therefore $\{s_k\}$ is Cauchy sequence in the complete LCTVS $X$, and therefore converges to an $s\in X$.
 In order to complete the proof, it suffices to show that for every bijection $\tau: \N \rightarrow \N$, the series $\sum_{j=0}^{\infty} y_{\tau(j)}$ converges to the same sum $s$. Let $s_k^{\tau} = \sum_{j=0}^{k} y_{\tau(j)}$. Choose $u \in \N$ such that the set of integers $\{0, 1,2,\cdots,N_0\}$ is contained in the set $\{\tau(0), \tau(1), \cdots, \tau(u)\}$. Then, if $k > u$, the elements $y_1, \cdots, y_{N_0}$ get cancelled in the difference $s_k - s_k^{\tau}$ and we have $p(s_k - s_k^{\tau}) < \epsilon$ by \eqref{eq:4}. This proves that the sequences $\{s_k\}$ and $\{s_k^{\tau}\}$ converge to the same limit. So, $s_k^{\tau} \rightarrow s$ as $k \rightarrow \infty$.
\end{proof}

By a famous theorem of Dvoretzky and Rogers, the converse of the above result fails when $X$ is an infinite dimensional Banach space (\cite[Chapter 4]{kadets1997}).

We will also need the notion of the weak (or Pettis) integral of a LCTVS valued function which we will now recall (see  \cite[p. INT III.32-39]{bourbaki} for details).
 Let $K$ be a compact Hausdorff space and let $\mu$ be a Borel measure on $K$, and let $f$ be a continuous map from $K$ to an LCTVS $X$. 
 An element $x \in X$ is called a \emph{Pettis integral} of $f$ on $K$ with respect to $\mu$ if for all $\phi \in X'$,
\begin{equation}\label{eq-pettis}
\phi (x) =  \int_{K} (\phi \circ f)\,d\mu,
\end{equation}
where $X'$ denotes the dual space of  $X$ (the space of continuous linear functionals on $X$), and the right hand side of \eqref{eq-pettis} is an integral of a continuous function. If $X$ is  complete, one can show that there exists a unique $x \in X$ such that \eqref{eq-pettis} holds and we denote the Pettis integral of $f$ on $K$ with respect to $\mu$ by $\displaystyle{\int_K f \hspace{1mm} d\mu=x} $. In fact, the integral exists uniquely, as soon as the space $X$ is \emph{quasi-complete}, i.e. if each \emph{bounded} Cauchy net in $X$ converges, where a net $\{x_{\alpha}\}_{\alpha \in \Gamma}$
 is bounded if for each continuous seminorm $p$,  the net of real numbers  $\{p(x_\alpha)\}_{\alpha\in \Gamma}$ is bounded. While there are situations in which this more refined existence theorem for Pettis integrals is useful (e.g. when the space $X$ is a dual space with weak-* topology), in this paper, we only consider integrals in complete LCTVS. Since each Hausdorff TVS has a unique Hausdorff
 completion, we can define Pettis integrals in any LCTVS, provided we allow the integral to have a value in the completion.

 If $T:X\to Y$ is a continuous linear map of  complete LCTVSs, and $f:K\to X$ is continuous then we have
\begin{equation}\label{eq-t}
 T\left(\int_K f \, d\mu \right)= \int_K T\circ f \, d\mu,
 \end{equation}
since for any linear functional $\psi\in Y'$,
\[\psi\left( 
 T\left(\int_K f \, d\mu \right)\right) = (\psi\circ 
 T)\left(\int_K f \, d\mu \right)= 
\int_K (\psi\circ T)(f) \, d\mu  \]
using the fact that $\psi\circ T\in X'$ and the definition \eqref{eq-pettis} of the Pettis integral.

\subsection{Representation of the torus on an LCTVS $X$.}\label{action of torus}
Let $$\T^n = \{\lambda=(\lambda_1,...,\lambda_n) \in \C^n : \abs{\lambda_j} = 1 \hspace{1mm} \text{for every $1 \leq j \leq n$}\}$$
be the $n$-dimensional unit torus. With the subspace topology
inherited from $\cx^n$ and  binary operation defined as $ (\lambda , \xi ) \mapsto \lambda {\cdot} \xi = (\lambda_1 \xi_1,\cdots,\lambda_n \xi_n)$, is a compact abelian topological group. For a LCTVS $X$, and a continuous function $f:\T^n \to X$, we denote 
the Pettis  integral of $f$ with respect to the Haar measure of $\T^n$ (normalized to be a probability measure)
by
\[ \int_{T^n} f(\lambda)d\lambda.\]

 Let $X$ be an LCTVS, and let $\lambda\mapsto \sigma_\lambda$ be a continuous representation of $\T^n$ on $X$. Recall that this means that  for each $\lambda\in \T^n$, the map $\sigma_\lambda$ is an automorphism (i.e. linear self-homeomorphism) of $X$ as a topological vector space, the map  $\lambda\mapsto \sigma_\lambda$ is a group homomorphism from $\T^n$ to $\mathrm{Aut}(X)$, and the associated map
\[ \sigma: \T^n\times X\to X, \quad \sigma(\lambda, x)=\sigma_\lambda (x)\]
is continuous.


Given a  representation  $\sigma$ of the group  $\T^n$ on an LCTVS $X$,  a continuous seminorm $p$ on $X$ is said 
to be invariant (with respect to $\sigma$)  if $p(\Sl(x)) = p(x)$ for all $x \in X$ and $\lambda \in \T^n$.
 \begin{prop}\label{prop-continuity}
A representation $\sigma$ of $\T^n$ on an LCTVS $X$ is continuous if and only if the following two conditions are both satisfied:
  \begin{enumerate}[label=(\alph*)]
  	\item the topology of $X$ is generated by a family of invariant seminorms, and
  \item for each $x \in X$ the function from $\T^n$ to $X$ given by $\lambda \mapsto \Sl(x)$ is continuous at the identity element of $\T^n$.
  \end{enumerate}
 \end{prop}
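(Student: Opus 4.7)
The plan is to prove the equivalence by two separate arguments: the necessity direction uses an averaging construction to manufacture invariant seminorms, and the sufficiency direction reduces joint continuity at a general point $(\lambda_0,x_0)$ to continuity at the identity via the group-translation identity $\sigma_\lambda=\sigma_{\lambda_0}\circ\sigma_{\lambda_0^{-1}\lambda}$.

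For necessity, condition (b) is free because joint continuity of $\sigma$ trivially implies separate continuity in each argument. For (a), given any continuous seminorm $q$ on $X$, I would introduce the averaged seminorm
\[\tilde q(x):=\sup_{\lambda\in\T^n}q(\sigma_\lambda(x)).\]
Finiteness follows because $\lambda\mapsto\sigma_\lambda(x)$ is continuous on the compact group $\T^n$, so its image is compact hence bounded in $X$, and every continuous seminorm is bounded on bounded sets. The seminorm axioms and the invariance $\tilde q(\sigma_\mu(x))=\tilde q(x)$ are mechanical (via the change of variable $\lambda\mapsto\lambda\mu^{-1}$ in the supremum). Since $q(x)\leq\tilde q(x)$ by taking $\lambda=e$, the family of all such $\tilde q$ separates points and dominates any given continuous seminorm, and therefore generates the original topology.

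The key technical verification is continuity of $\tilde q$, which by the seminorm property reduces to continuity at $0$. Here joint continuity of $\sigma$ (not merely separate continuity) is essential: given $\epsilon>0$, at each $\lambda\in\T^n$ the hypothesis produces a product neighborhood $V_\lambda\times U_\lambda$ of $(\lambda,0)$ in $\T^n\times X$ on which $q(\sigma_\mu(y))<\epsilon$; I would then cover $\T^n$ by finitely many $V_{\lambda_1},\dots,V_{\lambda_N}$ and intersect the corresponding $U_{\lambda_i}$ to obtain a neighborhood $U$ of $0$ in $X$ on which $\tilde q<\epsilon$.

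For sufficiency, assuming (a) and (b), I would check continuity of $\sigma$ at an arbitrary $(\lambda_0,x_0)$ by testing each invariant seminorm $p$ from the generating family. Using
\[\sigma_\lambda(x)-\sigma_{\lambda_0}(x_0)=\sigma_\lambda(x-x_0)+\sigma_{\lambda_0}\bigl(\sigma_{\lambda_0^{-1}\lambda}(x_0)-x_0\bigr)\]
together with the invariance of $p$ yields
\[p\bigl(\sigma_\lambda(x)-\sigma_{\lambda_0}(x_0)\bigr)\le p(x-x_0)+p\bigl(\sigma_{\lambda_0^{-1}\lambda}(x_0)-x_0\bigr).\]
The first term is controlled as $x\to x_0$ because $p$ generates (part of) the topology, and the second tends to $0$ as $\lambda\to\lambda_0$ because $\lambda_0^{-1}\lambda\to e$, at which point (b) applies. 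The main obstacle is the continuity of $\tilde q$ in the necessity direction: without the joint-continuity-plus-compactness argument, one could not pass a finite-$\epsilon$ bound uniformly through the supremum. Everything else is bookkeeping with the seminorm topology and the multiplicative structure of $\T^n$.
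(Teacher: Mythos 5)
Your proposal is correct and follows essentially the same route as the paper: the same averaged seminorm $\sup_{\lambda\in\T^n}q(\sigma_\lambda(x))$ with the compactness/finite-subcover argument for its continuity at $0$, and the same triangle-inequality decomposition exploiting invariance of $p$ and condition (b) for the sufficiency direction. No gaps.
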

 \begin{proof}
 Assume that $\sigma$ is continuous, i.e.,  $\sigma:\T^n\times X\to X$ is continuous, so for $x \in X$, the function $\lambda \mapsto \Sl(x)$ is continuous on $\T^n$, in particular at the identity.  Therefore (b) follows.
 
 For a continuous seminorm $q$ on $X$, define $p(x)=\sup_{\lambda \in \T^n} q(\Sl(x))$, which is  finite since $\T^n$ is compact, and which is easily seen to be a  seminorm. To show $p$ is invariant, we note that
  \[ p(\Sl(x)) = \sup \limits_{\mu \in \T^n} q(\sigma_\mu(\Sl(x))) = \sup \limits_{\mu \in \T^n} q(\sigma_{\mu{\cdot} \lambda}(x)) =  \sup \limits_{\xi \in \T^n} q(\sigma_{\xi}(x)) = p(x).\]
It remains to show that $p$ is continuous.   For $x,y\in X$, we have
\[p(x)=\sup_{\lambda\in \T^n}q\left(\sigma_\lambda(y)+ \sigma_\lambda(x-y)\right) \leq  \sup_{\lambda\in \T^n}\left(q(\sigma_\lambda(y))+ q(\sigma_\lambda(x-y))\right)\leq p(y)+p(x-y),\]
so that $\abs{p(x) - p(y)} \leq p(x-y)$ for all $x,y \in X$, and it follows that the seminorm $p$ is continuous on $X$ if and only if it is continuous at $0$. We show that for each $\epsilon>0$,  there exists a neighbourhood $V$ of $0$ in $X$ such that for all $\lambda \in \T^n$, $q \circ \Sl < \epsilon$ on $V$. For each  $\xi \in \T^n$, since $q$ and $\sigma$ are continuous, there exists a neighborhood $U_{\xi}$ of $\xi$  in $\T^n$ and a neighbourhood $V_{\xi}$ of $0$ in $X$ such that
$ q(\Sl(x)) < \epsilon$  for all $x \in V_{\xi}$ and $\lambda \in U_{\xi}$.
The collection $\{U_{\xi}\}_{\xi \in \T^n}$ forms an open cover of $\T^n$. Since $\T^n$ is compact, let $\{U_{\xi_1}, ..., U_{\xi_k}\}$ be a finite subcover of $\T^n$ corresponding to the open cover.  Then for all $x \in  \bigcap_{j=1}^{k}V_{\xi_j}  $ and $\lambda \in \T^n$, we have $q(\Sl(x)) < \epsilon$.

 Now assume the two conditions (a) and (b).  Let $(\Gamma, \geq)$ be a directed set and let $(\lambda_\alpha)_{\alpha \in \Gamma}$ and $(x_\alpha)_{\alpha \in \Gamma}$ be nets in $\T^n$ and $X$ respectively with $(\lambda_{\alpha}, x_\alpha) \rightarrow (\lambda, x)$ in $\T^n \times X$. We need to show $\Slalpha (x_\alpha) \rightarrow \Sl(x)$ in $X$, i.e., $p(\Slalpha (x_\alpha)  -\Sl(x))\to 0$ for each invariant continuous seminorm $p$ of $X$. But we have, by the invariance of $p$:
 \[ p\left(\Slalpha (x_\alpha\right)  -\Sl(x))= p\left(x_\alpha-\sigma_{\lambda_\alpha^{-1}\lambda}(x)\right)\leq p\left(x_\alpha-x\right)+ p\left(x-\sigma_{\lambda_\alpha^{-1}\lambda}(x)\right) .\]
 The term $p\left(x_\alpha-x\right)$  goes to zero since $x_\alpha\to x$, and the term $p\left(x-\sigma_{\lambda_\alpha^{-1}\lambda}(x)\right) $
 also  goes to zero since $\lambda_\alpha\to \lambda$ and $\mu \mapsto \sigma_\mu(x)$ is 
 continuous at $\mu=\mathbf{1}$. The result follows.
 \end{proof}
 
 \subsection{Abstract Fej\'{e}r Theorem}\label{abstract Fejer theorem}
Let $X$ be a complete LCTVS and let $\sigma$ be a continuous representation of $\T^n$ on $X$. For each $\alpha = (\alpha_1,...,\alpha_n) \in \Z^n$ and 
$x\in X$, define
\begin{equation}\label{eq-fouriermode}
 \pialpha^{\sigma}(x) = \int_{\T^n} \lambda^{-\alpha} \sigma_{\lambda} (x) \,d \lambda,   
\end{equation}
the Pettis integral of the continuous function $\lambda \mapsto \lambda^{-\alpha} \sigma_{\lambda} (x)$ on $\T^n$ with respect to the Haar 
probability measure of $\T^n$.  We will say that $\ppi^\sigma_\alpha(x)$ is the $\alpha$-th \emph{Fourier component} of $x$ with respect to the representation $\sigma$.  We use the standard convention with respect to multi-index powers, i.e., $\lambda^\alpha=\lambda_1^{\alpha_1}\dots \lambda_n^{\alpha_n}$.  We will say that the subspace of $X$ defined as
\begin{equation}\label{eq-modedef}
 [X]^\sigma_{\alpha}=\{x\in X: \sigma_\lambda(x)=\lambda^\alpha\cdot x \text{ for all } \lambda\in \T^n \}
\end{equation}
is the $\alpha$-th \emph{Fourier mode} of the space $X$, and we will call the map $\ppi^\sigma_\alpha$
the $\alpha$-th  \emph{Fourier projection}, both with respect to the representation $\sigma$. We note the following facts:
\begin{prop}\label{prop-fourierproperties}
	As above, let $X$ be a complete LCTVS and $\sigma$ be a continuous representation of $\T^n$ on $X$. 
\begin{enumerate}
	\item For each $\alpha\in \Z^n$, the $\alpha$-th Fourier mode $[X]^\sigma_\alpha$
	 is a closed $\sigma$-invariant  subspace of $X$, and the Fourier projection $\ppi^\sigma_\alpha$ is a continuous linear projection from 
	 $X$ onto  $[X]^\sigma_\alpha$. 
	 \item Let $Y$ be another complete LCTVS,  and let $\tau$ be a continuous representation of $\T^n$ on $Y$, and let 
	  $j:Y\to X$ be a continuous 
	 linear map intertwining $\sigma $ and $\tau$, i.e., for each $\lambda\in \T^n$, 
	 $ j\circ \tau_\lambda = \sigma_\lambda\circ j.$
	 Then for each  $\alpha\in \Z^n$, we have
	 \begin{equation}\label{eq-equivariance}
	 	 j\circ \ppi^\tau_\alpha=\ppi^\sigma_\alpha\circ j.
	 \end{equation}
\end{enumerate}
\end{prop}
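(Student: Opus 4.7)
My plan is to handle the two parts in sequence, since part~(2) is essentially a direct application of the naturality of the Pettis integral encapsulated in \eqref{eq-t}, while part~(1) decomposes into four almost-formal verifications.

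For part~(1), I would first observe that $[X]^\sigma_\alpha = \bigcap_{\lambda \in \T^n} \ke(\sigma_\lambda - \lambda^\alpha \id_X)$. Each $\sigma_\lambda - \lambda^\alpha \id_X$ is continuous because $\sigma_\lambda \in \Aut(X)$, so $[X]^\sigma_\alpha$ is closed as an intersection of closed subspaces. For $\sigma$-invariance, if $x \in [X]^\sigma_\alpha$ and $\mu \in \T^n$, the homomorphism property $\sigma_{\lambda \mu} = \sigma_\lambda \circ \sigma_\mu$ combined with $\lambda^\alpha \mu^\alpha = (\lambda \mu)^\alpha$ gives $\sigma_\lambda(\sigma_\mu(x)) = \lambda^\alpha \sigma_\mu(x)$ for every $\lambda$, so $\sigma_\mu(x) \in [X]^\sigma_\alpha$.

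The projection $\ppi^\sigma_\alpha$ is linear by linearity of the Pettis integral. For continuity I would use an invariant generating family $\{p_k\}_{k \in K}$ (available by Proposition~\ref{prop-continuity}) and the seminorm estimate $p\bigl(\int_{\T^n} f(\lambda)\,d\lambda\bigr) \le \int_{\T^n} p(f(\lambda))\,d\lambda$ for Pettis integrals (a consequence of Hahn--Banach applied to~\eqref{eq-pettis}); since $|\lambda^{-\alpha}| = 1$ on $\T^n$ and $p$ is $\sigma$-invariant, this yields $p_k(\ppi^\sigma_\alpha(x)) \le p_k(x)$, so each $p_k \circ \ppi^\sigma_\alpha$ is continuous and thus so is $\ppi^\sigma_\alpha$ by \eqref{eq:1}. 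To show $\ppi^\sigma_\alpha(X) \subseteq [X]^\sigma_\alpha$, I would apply the continuous operator $\sigma_\mu$ to the integral defining $\ppi^\sigma_\alpha(x)$, pull it inside via~\eqref{eq-t}, use $\sigma_\mu \sigma_\lambda = \sigma_{\mu \lambda}$, and then perform the substitution $\eta = \mu \lambda$ using translation invariance of the Haar measure; this yields $\sigma_\mu(\ppi^\sigma_\alpha(x)) = \mu^\alpha \ppi^\sigma_\alpha(x)$. Finally, the projection identity $\ppi^\sigma_\alpha \circ \ppi^\sigma_\alpha = \ppi^\sigma_\alpha$ reduces to the statement that $\ppi^\sigma_\alpha$ is the identity on $[X]^\sigma_\alpha$: if $\sigma_\lambda(x) = \lambda^\alpha x$, then $\lambda^{-\alpha} \sigma_\lambda(x) = x$ and integrating against the probability measure on $\T^n$ gives back $x$.

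For part~(2), I would simply apply \eqref{eq-t} to the continuous linear map $j:Y\to X$ and the integrand $\lambda \mapsto \lambda^{-\alpha} \tau_\lambda(y)$, obtaining
\[
j\bigl(\ppi^\tau_\alpha(y)\bigr) = \int_{\T^n} \lambda^{-\alpha}\, j(\tau_\lambda(y))\,d\lambda = \int_{\T^n} \lambda^{-\alpha}\, \sigma_\lambda(j(y))\,d\lambda = \ppi^\sigma_\alpha(j(y)),
\]
where the middle equality uses the intertwining hypothesis $j \circ \tau_\lambda = \sigma_\lambda \circ j$.

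The only nontrivial technical point is the change-of-variables step in proving $\ppi^\sigma_\alpha(X) \subseteq [X]^\sigma_\alpha$; it is routine but requires that the Pettis integral behave well under the translation $\lambda \mapsto \mu\lambda$, which follows because $\mu \mapsto \sigma_{\mu\lambda}(x)$ is continuous and the Haar measure is translation-invariant, so both sides of \eqref{eq-pettis} transform consistently. Beyond that, the whole argument is a formal manipulation of Pettis integrals, the intertwining naturality \eqref{eq-t}, and the invariance of the Haar measure.
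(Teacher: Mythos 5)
Your proof is correct and follows essentially the same route as the paper's: the continuity of $\ppi^\sigma_\alpha$ via the estimate $p(\ppi^\sigma_\alpha(x))\le p(x)$ for an invariant generating family of seminorms, the inclusion $\ppi^\sigma_\alpha(X)\subseteq [X]^\sigma_\alpha$ via \eqref{eq-t} and Haar invariance, the reverse inclusion by noting $\ppi^\sigma_\alpha$ is the identity on $[X]^\sigma_\alpha$, and part~(2) as a one-line application of \eqref{eq-t}. You additionally spell out the closedness and $\sigma$-invariance of $[X]^\sigma_\alpha$ (as an intersection of kernels of the continuous maps $\sigma_\lambda-\lambda^\alpha\id_X$, plus commutativity of $\T^n$), which the paper asserts but does not verify explicitly.
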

\begin{proof}\begin{enumerate}[wide]
		\item 
		Linearity of $\pialphasigma$ follows from the linearity of $\Sl$. Recall from Proposition \ref{prop-continuity} that there exists a family $\mathscr{P}$ of continuous invariant seminorms that generates the locally convex topology of $X$. To see the continuity of $\pialphasigma$, observe that for each $p\in \mathscr{P}$, we have
		\begin{equation}\label{eq:2E}
			p (\pialphasigma (x)) = p \all{ \int_{\T^n} \lambda^{-\alpha} \sigma_{\lambda} (x) \,d \lambda  }
			\leq  \int_{\T^n}p\all{ \lambda^{-\alpha} \sigma_{\lambda} (x) } \,d \lambda = \int_{\T^n} p (x) \,d \lambda  = p (x),
		\end{equation}
		where the inequality is due to Proposition 6 in \cite[p. INT III.37]{bourbaki}. Now, if $x \in [X]^\sigma_\alpha$, then
		\begin{equation*}
			\pialphasigma (x) = \int_{\T^n} \lambda^{-\alpha} \sigma_{\lambda} (x) \hspace{1mm}d \lambda =  \int_{\T^n} \lambda^{-\alpha} \cdot \lambda^{\alpha} x  \hspace{1mm}d \lambda = x \cdot \bigg( \int_{\T^n} \hspace{1mm}d \lambda \bigg) = x,
		\end{equation*}
		so that $[X]^\sigma_\alpha\subset \pialphasigma(X)$. To prove that $\pialphasigma(X) \subset [X]^\sigma_\alpha$, notice that 
		for each $\alpha\in \Z^n$, each $\lambda\in \T^n$ and
		$x\in X$, 
		\begin{align*}
			\sigma_\lambda(\ppi^\sigma_\alpha(x))&= \sigma_\lambda\left(\int_{\T^n} \mu^{-\alpha} \sigma_\mu(x)d\mu\right)
			= \int_{\T^n} \mu^{-\alpha} \sigma_{\lambda\cdot\mu}(x)d\mu& \text{using \eqref{eq-t}}\nonumber\\
			&=\lambda^\alpha \cdot\int_{\T^n}(\lambda\mu)^{-\alpha} \sigma_{\lambda\mu}(x)d\mu
			=\lambda^\alpha  \cdot\ppi^\sigma_\alpha(x)& \text{by Haar invariance}.\label{eq-into}
		\end{align*}

	\item For $x\in X$, we have
		\[ j\circ \ppi^\tau_\alpha(x) = j\left( \int_{\T^n}\lambda^{-\alpha}\tau_\lambda(x)d\lambda\right)= \int_{\T^n}\lambda^{-\alpha}j\circ\tau_\lambda(x)d\lambda= \int_{\T^n}\lambda^{-\alpha}\sigma_\lambda\circ j(x) d\lambda= \ppi^\sigma_\alpha\circ j(x),\]
		where we have used \eqref{eq-t} to go from the second to the third step.  
	\end{enumerate}
\end{proof}
\begin{rem}
	The inequality \eqref{eq:2E}
	\begin{equation}\label{eq-cauchy}
	p \left(\pialphasigma (x)\right)\leq p(x),
	\end{equation}
which holds for each $\alpha$ for an invariant seminorm $p$ can be thought of as an abstract
form of the familiar \emph{Cauchy inequalities} of complex analysis.
\end{rem}

If  $X$  is $L^1(\T)$, the Banach space of integrable functions  on 
$\T$, and if $\sigma$ is the continuous representation of $\T$ on $L^1(\T)$
given by 
\[ \sigma_\lambda(f)(\mu)= f(\lambda\cdot\mu), \quad \lambda, \mu\in \T,  f\in L^1(\T),\]
an easy computation shows that for $\phi\in \rl$, and $ f\in L^1(\T)$, we have
\[  \pialpha^{\sigma}(f)(e^{i\phi})=e^{i\alpha\phi}\cdot\hat{f}(\alpha),\quad\text{with } \hat{f}(\alpha)= \frac{1}{2\pi}\cdot\int_0^{2\pi}e^{-i\alpha\theta}f( e^{i\theta})d\theta,\]
the $\alpha$-th term of the Fourier series of $f$. 

It is therefore natural to define, for $x \in X$, the \emph{Fourier series} of $x$ with respect to $\sigma$ to be the formal series 
\begin{equation}\label{eq-fourierseries}
x \sim \sum \limits_{\alpha \in \Z^n} \pialpha^{\sigma} (x).
\end{equation}
For an integer $k\geq 0$, define the $k$-th \emph{square partial sum} of the Fourier series in \eqref{eq-fourierseries} by
\begin{equation}\label{eq:squaresum}
S_k^{\sigma}(x)= \sum \limits_{\abs{\alpha}_{\infty} \leq k} \pialpha^{\sigma} (x),
\end{equation}
where $\abs{\alpha}_{\infty}  \coloneqq \max \big \{\abs{\alpha_j}, 1 \leq j \leq n \big \}$. We are ready to state an abstract version of Fej\'{e}r's theorem. 
\begin{thm}\label{thm-fejer}
Let $\sigma$ be a continuous representation of $\T^n$ on an LCTVS $X$ and let $x \in X$. Then the Ces\`{a}ro means of the square partial sums of the Fourier series of $x$ (with respect to $\sigma$) converge to $x$ in the topology of $X$.
\end{thm}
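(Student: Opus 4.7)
The plan is to write the Ces\`{a}ro mean as a Pettis integral against an $n$-dimensional Fej\'{e}r-type kernel on $\T^n$ and then run the classical approximate-identity argument, using the continuous $\sigma$-invariant seminorms supplied by Proposition~\ref{prop-continuity} in place of ordinary norms.

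By linearity of the Pettis integral in the integrand,
\[
C_N^\sigma(x) \;=\; \int_{\T^n} \Phi_N(\lambda)\,\sigma_\lambda(x)\,d\lambda, \qquad \Phi_N(\lambda) \;=\; \frac{1}{N+1}\sum_{k=0}^N \prod_{j=1}^n D_k(\lambda_j),
\]
where $D_k$ is the $1$-dimensional Dirichlet kernel; equivalently, the $\alpha$-th Fourier coefficient of $\Phi_N$ equals $\max\{0,\,1-|\alpha|_\infty/(N+1)\}$, so $\int_{\T^n}\Phi_N\,d\lambda = 1$ and $C_N^\sigma(x) - x = \int_{\T^n}\Phi_N(\lambda)(\sigma_\lambda(x) - x)\,d\lambda$. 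For any continuous $\sigma$-invariant seminorm $p$, the Pettis-integral estimate combined with $p$-invariance (which forces $p(\sigma_\lambda(x) - x) \le 2 p(x)$) gives
\[
p(C_N^\sigma(x) - x) \;\le\; \int_{\T^n}|\Phi_N(\lambda)|\,p(\sigma_\lambda(x) - x)\,d\lambda.
\]
Given $\varepsilon > 0$, continuity of $\lambda \mapsto \sigma_\lambda(x)$ at $\mathbf{1}$ produces a neighborhood $U$ of $\mathbf{1}$ on which $p(\sigma_\lambda(x) - x) < \varepsilon$; splitting the integral over $U$ and its complement bounds the right-hand side by $\varepsilon\,\|\Phi_N\|_{L^1(\T^n)} + 2p(x) \int_{\T^n\setminus U}|\Phi_N(\lambda)|\,d\lambda$.

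It remains to verify two approximate-identity properties of $\Phi_N$: the uniform bound $\sup_N \|\Phi_N\|_{L^1(\T^n)} < \infty$ and the mass-concentration $\int_{\T^n\setminus U}|\Phi_N|\to 0$ for every neighborhood $U$ of $\mathbf{1}$. For $n=1$ both are trivial from nonnegativity and the standard concentration of the classical Fej\'{e}r kernel. The main obstacle is the case $n \ge 2$, where $\Phi_N$ is no longer pointwise nonnegative (a direct computation gives, for instance, $\Phi_2(i,-1) = -1/3$), and the uniform $L^1$-bound and concentration then have to be established by a careful trigonometric estimate using the explicit product-of-Dirichlet form of $\Phi_N$ rather than by appealing to positivity.
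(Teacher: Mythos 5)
Your reduction of the theorem to approximate-identity properties of the kernel $\Phi_N(\lambda)=\frac{1}{N+1}\sum_{k=0}^N\prod_{j=1}^n D_k(\lambda_j)$ is the same strategy as the paper's, and your bookkeeping is correct: the Fourier coefficients $(1-\abs{\alpha}_\infty/(N+1))^+$, the normalization $\int_{\T^n}\Phi_N\,d\lambda=1$, the invariant-seminorm estimate via the Pettis integral, and the splitting over a neighborhood of $\mathbf{1}$ and its complement. The gap is that you stop exactly where the real work begins: for $n\ge 2$ you never prove the two properties you correctly identify as the remaining obstacles, namely $\sup_N\|\Phi_N\|_{L^1(\T^n)}<\infty$ and $\int_{\T^n\setminus U}\abs{\Phi_N}\,d\lambda\to 0$. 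These are not a routine ``careful trigonometric estimate'': $\Phi_N$ is the Marcinkiewicz--Fej\'{e}r kernel, whose uniform $L^1$-boundedness (quasi-positivity) is a genuine theorem of Marcinkiewicz--Zhizhiashvili type. The naive bound $\|\Phi_N\|_1\le\frac{1}{N+1}\sum_{k=0}^N\|D_k\|_1^n\asymp(\log N)^n$ diverges, so something sharper is required; and concentration fails pointwise off a neighborhood of $\mathbf{1}$ (e.g. $\Phi_N(-1,1)=\frac{1}{N+1}\sum_{k=0}^N(-1)^k(2k+1)=(-1)^N$ does not tend to $0$), so it can only hold in the $L^1$ sense and must be argued as such. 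Without these two facts the argument is incomplete.

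That said, your observation that $\Phi_N$ is not nonnegative for $n\ge 2$ (your value $\Phi_2(i,-1)=-1/3$ checks out) is correct, and it in fact exposes a flaw in the paper's own proof: the paper asserts that $F_N$ sums to the product $\frac{1}{N+1}\prod_{j}\sin^2\big(\tfrac{N+1}{2}\theta_j\big)/\sin^2\big(\tfrac{\theta_j}{2}\big)$ of one-dimensional Fej\'{e}r kernels, but a sum of products of Dirichlet kernels is not the product of the sums, and the claimed properties (a) and (c) of $F_N$ fail for $n\ge 2$. The cleanest repair, for both your argument and the paper's, is to replace the single-parameter Ces\`{a}ro means of the square partial sums by the iterated means $\frac{1}{(N+1)^n}\sum_{k_1,\dots,k_n=0}^{N}S_{k_1,\dots,k_n}$ over rectangular partial sums: the associated kernel genuinely is $\prod_j F^{(1)}_N(\theta_j)$, for which positivity, normalization and concentration are immediate, and your approximate-identity argument then closes with no further estimates. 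Alternatively, keep the square means and supply the Marcinkiewicz--Fej\'{e}r kernel estimates in full; either way, as written your proof is missing its analytic core for $n\ge 2$.
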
 
\begin{proof}
Write the Ces\`{a}ro means of the square partial sums of the Fourier series of $x$ as,
\begin{align*} C_N^{\sigma}(x) &=  \frac{1}{N+1} \sum \limits_{k=0}^{N} S_k^{\sigma} (x) = \frac{1}{N+1} \sum \limits_{k=0}^{N} \sum \limits_{\abs{\alpha}_{\infty} \leq k} \pialphasigma(x)= \frac{1}{N+1} \sum \limits_{k=0}^{N} \sum \limits_{\abs{\alpha}_{\infty} \leq k} \left( \int \limits_{\T^n} \lambda^{-\alpha} \Sl(x)\, d \lambda\right)\\
&=\frac{1}{N+1} \int \limits_{\T^n} \sum \limits_{k=0}^{N} \left( \sum \limits_{\abs{\alpha}_{\infty} \leq k} \lambda^{-\alpha} \right) \Sl(x) \, d\lambda  = \int \limits_{\T^n} F_N (\lambda) \hspace{1mm} \Sl(x) \, d\lambda 
\end{align*}
where $$ F_N (\lambda) =\displaystyle{ \frac{1}{N+1}  \sum \limits_{k-0}^{N} \left( \sum \limits_{\abs{\alpha}_{\infty} \leq k} \lambda^{-\alpha} \right)}$$ is the classical
 Fej\'{e}r kernel.  Introducing polar coordinates, $\lambda_j=e^{i\theta_j}$ on $\T^n$, and summing, we obtain the classical representation
 \[ F_N(e^{i\theta_1}, \cdots , e^{i\theta_n}) =\frac{1}{N+1}\prod_{j=1}^{n} \frac{\sin^2 \left(\left(\frac{N+1}{2} \right) \theta_j \right)}{\sin^2  
 \left(\frac{\theta_j}{2} \right) }\]
 It is well-known that the Fej\'{e}r kernel has the properties that 
 \begin{itemize}
     \item[(a)]$F_N \geq 0$ for all $N$,
     \item[(b)] $\int \limits_{\T^n} F_N(\lambda)\, d\lambda = 1$
and 
\item[(c)]For each $\delta > 0$, $F_N \rightarrow 0$ uniformly on $\T^n \setminus B(\mathbf{1}, \delta) $, where $B(\mathbf{1}, \delta)$ is the $n$-dimensional ball centered at $\mathbf{1}= (1,1,...,1)$ and radius $\delta$.
 \end{itemize}
Let $p$ be a continuous  $\sigma$-invariant seminorm on $X$. Then for $x\in X$, we have
\begin{align}
p(C_N(x) - x) &= p \left( \int \limits_{\T^n} F_N (\lambda) \cdot \Sl(x) \, d\lambda - x \cdot \int \limits_{\T^n} F_N(\lambda) \, d\lambda \right) \nonumber\\ & 
\qquad\text{using property (b) of $F_N$} \nonumber \\
&= p \left( \int \limits_{\T^n} F_N (\lambda) \cdot \Sl(x) \,d\lambda - \int \limits_{\T^n} x \cdot F_N(\lambda) \,d\lambda \right) \nonumber \\
& \qquad \text{using \eqref{eq-t}}\nonumber\\
&= p \left( \int \limits_{\T^n} \big (\Sl(x) - x \big ) \cdot F_N(\lambda) \, d \lambda\right)\nonumber  \\
&\leq \int \limits_{\T^n}  p (\Sl(x) -x)\cdot F_N(\lambda) \, d \lambda \label{eq:35}\\
& \qquad \text{ using  \cite[Prop. 6, p. INT III.37]{bourbaki} and the positivity of $F_N(\lambda)$}\nonumber
\end{align}
Since $\lambda \mapsto \Sl(x)$ is continuous and $p$ is a continuous seminorm, there exists $\delta >0$ such that
$p (\Sl(x) - \sigma_{\mathbf{1}}(x)) = p(\Sl(x) - x) < \frac{\epsilon}{ 2} $  whenever $\lambda \in \T^n \cap B(\mathbf{1},\delta).$
Then on the set  $\T^n \setminus B(\mathbf{1},\delta)$ we have
$$
p(\Sl(x) - x) \leq p(\Sl(x)) + p(x) = 2 \cdot p(x).
$$
By property (c) of $F_N$,   there exists $N_1 \in \N$ such that whenever $\lambda \in \T^n \setminus B(\mathbf{1},\delta)$,  for all $N \geq N_1$ we have,
\begin{equation} \label{eq:37}
 F_N (\lambda) < \frac{\epsilon}{4 \cdot p(x)}   
\end{equation}
Now from \eqref{eq:35}, we have
\begin{align*}
 p (C_N(x) - x) 
 &\leq \int \limits_{\T^n \cap B(\mathbf{1},\delta)}   F_N(\lambda) \cdot p (\Sl(x) -x) \, d \lambda + \int \limits_{\T^n \setminus B(\mathbf{1},\delta)} F_N(\lambda) \cdot p (\Sl(x) -x) \, d \lambda \\
 & < \frac{\epsilon }{2} \int \limits_{\T^n} F_n(\lambda) \, d\lambda + \frac{\epsilon}{4 \cdot p(x)} \cdot 2 \cdot p(x) \\
 &= \frac{\epsilon}{2} +  \frac{\epsilon}{2} = \epsilon.
\end{align*}
Since by Proposition~\ref{prop-continuity} the topology of $X$ is generated by $\sigma$-invariant seminorms, the result follows.
\end{proof}
\begin{cor}\label{cor-cesaro}
    Let $X$ be a complete LCTVS and suppose that we are given
    a continuous representation of the group $\T^n$ on $X$. Then if the Fourier series of an element $x\in X$ with respect to this representation is absolutely convergent in $X$, the sum of the Fourier series equals $x$.
\end{cor}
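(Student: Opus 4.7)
The plan is to combine three results already established in this section: Proposition~\ref{prop-absolute} (absolute convergence implies unconditional convergence to a common sum), Proposition~\ref{prop-cesaro} (Ces\`{a}ro means of convergent sequences share the limit), and Theorem~\ref{thm-fejer} (Ces\`{a}ro means of the square partial sums converge to $x$). The strategy is to produce a single element $s \in X$ that is simultaneously the unconditional sum of the Fourier series and the limit of the square partial sums, then use Fej\'{e}r's theorem together with the Hausdorff property of $X$ to conclude $s = x$.

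First I would invoke Proposition~\ref{prop-absolute}: since $X$ is complete and the series $\sum_{\alpha \in \Z^n} \pialphasigma(x)$ is absolutely convergent, there exists $s \in X$ such that for every bijection $\theta : \N \to \Z^n$ the series $\sum_{j=0}^\infty \ppi_{\theta(j)}^\sigma(x)$ converges in $X$ to $s$.

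Next, I would choose a specific convenient enumeration. Order $\Z^n$ by listing first the unique multi-index $\alpha$ with $\abs{\alpha}_\infty = 0$, then (in any fixed order) all $\alpha$ with $\abs{\alpha}_\infty = 1$, then all $\alpha$ with $\abs{\alpha}_\infty = 2$, and so on. Under this bijection $\theta$, after exactly $(2k+1)^n$ terms the partial sum equals $S_k^\sigma(x)$ as defined in \eqref{eq:squaresum}. Hence $\{S_k^\sigma(x)\}_{k \in \N}$ is a subsequence of the partial sums of $\sum_j \ppi_{\theta(j)}^\sigma(x)$, and since the latter converges to $s$, so does the subsequence:
\[
S_k^\sigma(x) \longrightarrow s \quad \text{in } X.
\]
Applying Proposition~\ref{prop-cesaro} to the convergent sequence $\{S_k^\sigma(x)\}$ then yields $C_N^\sigma(x) \to s$, where $C_N^\sigma(x)$ are the Ces\`{a}ro means of the square partial sums.

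Finally, Theorem~\ref{thm-fejer} gives $C_N^\sigma(x) \to x$ in $X$. Since $X$ is Hausdorff, limits of nets (in particular, sequences) are unique, so $s = x$. This proves $\sum_{\alpha \in \Z^n} \ppi_\alpha^\sigma(x) = x$, as required.

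The only subtle point, and what I would regard as the main thing to get right, is the bookkeeping linking the two notions of partial sum: the ``linear'' partial sums used in the definition of unconditional convergence versus the ``square'' partial sums $S_k^\sigma(x)$ used in Fej\'{e}r's theorem. Once the enumeration above is specified so that the square sums appear as a subsequence of the linear sums, the three cited results fit together immediately; no new estimates are needed.
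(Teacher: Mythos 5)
Your proof is correct and follows essentially the same route as the paper's: combine Proposition~\ref{prop-absolute}, Proposition~\ref{prop-cesaro}, and Theorem~\ref{thm-fejer}, then use uniqueness of limits. In fact you are slightly more careful than the paper, which applies Proposition~\ref{prop-cesaro} to the partial sums of an arbitrary enumeration and then quietly identifies their Ces\`{a}ro means with those of the square partial sums; your choice of enumeration by increasing $\abs{\alpha}_\infty$, making $S_k^\sigma(x)$ a subsequence of the linear partial sums, supplies exactly the bookkeeping that step needs.
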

\begin{proof}
Since the series $\sum_{\alpha \in \Z^n} \pi_{\alpha}^{\sigma}(x)$ is absolutely convergent, by Proposition \ref{prop-absolute}, there exists the sum of the series, i.e., an  $\wt{x} \in X$  such that for every bijection $\theta:\N \rightarrow \Z^n$ we have $ \sum_{j=0}^{\infty} \pi_{\theta(j)}^{\sigma}(x) = \wt{x}$. Let $S_N = \sum_{j=0}^{N} \pi_{\theta(j)}^{\sigma}(x)$; then the sequence of partial sums $\{S_N\}$ converges to $\wt{x}$ in $X$. By Proposition \ref{prop-cesaro}, the sequence of Ces\`{a}ro means $\{C_N\}$ of the partial sums converges to $\wt{x}$ as well. However, by Theorem \ref{thm-fejer}, the Ces\`{a}ro means converge to $x$. Therefore $\wt{x} = x$.     
\end{proof}

\section{Recapturing Complex analysis}\label{sec-distn}
	  We now use the machinery developed in the previous section to give a conceptually simple account of the remarkable regularity properties of holomorphic distributions.
So we will pretend that we have forgotten everything about complex analysis, but do remember the rudiments of the  theory of distributions, accounts of which can be found in the classic
 treatises \cite{horvol1,treves,schwartz}. First we  clarify notations and recall a few facts.
 
 \subsection{The basic spaces} For an open $\Omega\subset\rl^n$, the space $\DD(\Omega)$ of test functions is the $LF$-space of smooth compactly supported complex valued functions, topologized as the inductive limit of 
 the Fréchet spaces $\DD_K$ consisting, for a given compact $K\subset \Omega$, of those elements of $\DD(\Omega)$ which have support in $K$. 
 Recall that a subset $B\subset \DD(\Omega)$ is bounded, if  and only if there is a compact $K \subset \Omega$ such that 
 $B\subset \DD_K$, and for each nonnegative multi-index $\alpha\in \N^n$, we have
 \begin{equation}
 	\label{eq-bounded}
  \sup_{\substack{\phi\in B\\x\in K}}\abs{\frac{\partial^\alpha}{\partial x^\alpha}\phi(x)}<\infty,
 \end{equation}
where, here and later, we will use standard multi-index conventions such as 
\begin{equation}\label{eq-convention}
\frac{\partial^\alpha}{\partial x^\alpha}=\frac{\partial^{\abs{\alpha}}}{\partial x_1^{\alpha_1}\dots \partial x_n^{\alpha_n}}, \quad \abs{\alpha} =\sum_j \alpha_j, \quad \text{for} \quad \alpha\in \N^n. 
\end{equation}

  The space of distributions  $\DD'(\Omega)$  on $\Omega$ is
 the dual  of $\DD(\Omega)$, consisting of continuous linear forms on $\DD(\Omega)$.
  We denote the value of a distribution $T\in \DD'(\Omega)$ at a test function $\phi\in \DD(\Omega)$ by 
  $\ipr{T,\phi}.$   The space   $\DD'(\Omega)$ is endowed  with the  usual  strong dual topology. Recall that this topology is generated by 
   the  family of seminorms 
   \begin{equation}
   	\label{eq-seminorm}
    \left\{p_B : B\subset \DD(\Omega)  \text { is bounded}\right\}, \quad \text{where,  }  p_B(T)= \sup_{\phi\in B} \abs{\ipr{T,\phi}}.
\end{equation}
  In this topology, the space $\DD'(\Omega)$ is complete. 
  
  Given a locally integrable function $f\in L^1_{\mathrm{loc}}(\Omega)$,  we can associate to $f$ a distribution $T_f$ defined by
  \[ \ipr{T_f, \phi}=\int_\Omega f\phi\, dV, \textrm{ for } \phi\in \DD(\Omega), \]
	  where $dV$ denotes the Lebesgue measure of $\rl^n$.  Then the \emph{locally integrable distribution}  $T_f$ is said to be generated by $f$, and
  as usual we grant ourselves the the right to abuse language by identifying the distribution $T_f\in \DD'(\Omega)$ with the function $f\in L^1_{\mathrm{loc}}(\Omega)$.

We will use the abbreviations 
\begin{equation}\label{eq-dbardef}
 \dd_j= \frac{\partial}{\partial z_j}, \quad  \db_j=  \frac{\partial}{\partial\ol{z_j}}, \end{equation}
for the basic constant coefficient differential operators of complex analysis, acting on functions or distributions on $\cx^n$.
If $\Omega\subset \cx^n$ is an open set, define
\begin{equation}
	\label{eq-holdistn}\mathscr{O}(\Omega)= \left\{ f\in \DD'(\Omega): \db_j f=0, 1\leq j\leq n\right\},
\end{equation}
the space of \emph{holomorphic distributions} on $\Omega$. The subspace $\mathscr{O}(\Omega)$ is closed in the space $\DD'(\Omega)$  by the continuity of the operators $\db_j$, and is therefore a complete LCTVS in the subspace topology.
\subsection{The main theorem}We begin with some definitions and notational conventions. For $\lambda\in \T^n$  we denote by $\rot_\lambda$ the \emph{Reinhardt rotation} of $\cx^n$ by the element $\lambda$, the linear
automorphism of the vector space $\cx^n$ given by
\begin{equation}
	\label{eq-rot}
	\rot_\lambda(z)=(\lambda_1z_1,\dots, \lambda_n z_n).
\end{equation}
A domain $\Omega\subset \cx^n$ is defined to be \emph{ Reinhardt}  if and only if $\rot_\lambda(\Omega)=\Omega$ for each $\lambda\in \T^n$.
Let $Z$ denote the union of the coordinate hyperplanes of $\cx^n$:
\begin{equation}\label{eq-Z}
	Z=\bigcup_{j=1}^n \{z\in \cx^n: z_j=0\}.
\end{equation}
	Recall that a Reinhardt domain $\Omega\subset \cx^n$ is said to be \emph{log-convex}, if whenever $z,w\in \Omega\setminus Z$, the point 
	$\zeta\in \cx^n$  belongs to $\Omega$ if there is a $t\in [0,1]$ such that  for $1\leq j\leq n$
	\begin{equation}\label{eq-logconvex1}
		 \abs{\zeta_j}= \abs{z_j}^t \abs{w_j}^{1-t}.
	\end{equation}
For $\alpha\in \Z^n$, let  $e_\alpha$ be the monomial function given by 
\begin{equation}
	\label{eq-ealpha} e_\alpha(z)=z^\alpha=z_1^{\alpha_1}\dots z_n^{\alpha_n}.
\end{equation}
For a Reinhardt subset $\Omega\subset \cx^n$ and $1\leq j \leq n$, let
\begin{equation}\label{eq-relcomp}
	{\Omega}^{(j)}=\{(z_1,\dots, \zeta z_j, \dots, z_n): z\in \Omega, \zeta\in \ol{\D}\},
\end{equation}
where $\ol{\D}=\{\abs{\zeta}\leq 1\}\subset \cx$ is the closed disk. This can be thought of as the result of ``completing" $\Omega$ in the $j$-th coordinate. 
Following \cite{jarpflubook}, we say that the Reinhardt domain $\Omega\subset \cx^n$ is \emph{relatively  complete} if for each $1\leq j \leq n$, whenever we have 
$\Omega\cap\{z_j=0\}\not=\emptyset$, we also have
\[ \Omega^{(j)}\subset \Omega.\]
We prove the following well-known structure theorem for holomorphic distributions on Reinhardt domains, as an application of the ideas of Section~\ref{sec-abstract}:
\begin{thm}\label{thm-distn}
	Let $\Omega $ be a Reinhardt domain in $\cx^n$ and let 
	\begin{equation}\label{eq-somega}
		 \mathcal{S}(\Omega)= \{ \alpha\in \Z^n: e_\alpha \in \mathcal{C}^\infty(\Omega)\}.
	\end{equation}
Let $\wh{\Omega}$ be the smallest relatively complete log-convex Reinhardt domain in $\cx^n$ that contains $\Omega$.
The  for each $\alpha\in \mathcal{S}(\Omega)$ 
there is a continuous linear functional 
	$a_\alpha:\OO(\Omega)\to \cx$ such that for each $T\in \OO(\Omega)$, the series 
	\begin{equation}\label{eq-laurent}
\sum_{\alpha\in \mathcal{S}(\Omega)} a_\alpha(T)e_\alpha
	\end{equation}
	converges absolutely in $\mathcal{C}^\infty(\wh{\Omega})$  to a function $f\in \mathcal{C}^\infty(\wh{\Omega})$, and  $f|_\Omega$ generates 
	the distribution $T$.
\end{thm}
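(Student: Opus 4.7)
The plan is to apply the abstract Fourier machinery of Section \ref{sec-abstract} to the action of $\T^n$ on $\OO(\Omega)$ induced by Reinhardt rotations. Define $\sigma_\lambda T$ to be the pullback of $T$ by $\rot_{\lambda^{-1}}$, giving a representation of $\T^n$ on $\DD'(\Omega)$. Using Proposition \ref{prop-continuity}, continuity reduces to continuity at the identity of $\lambda \mapsto \sigma_\lambda T$ for each fixed $T$, which follows because $\lambda \mapsto \phi \circ \rot_\lambda$ is continuous into $\DD(\Omega)$ uniformly in $\phi$ over bounded subsets. Since each $\db_j$ commutes with each $\rot_\lambda$, the subspace $\OO(\Omega)$ is closed and $\sigma$-invariant in $\DD'(\Omega)$, and $\sigma$ restricts to a continuous representation on this complete LCTVS.

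Next I would classify the Fourier modes $[\OO(\Omega)]^\sigma_\alpha$. If $T$ lies in this mode, then $\sigma_\lambda T = \lambda^\alpha T$, and on the open set $\Omega \setminus Z$, where $e_\alpha$ is smooth and nowhere zero, the quotient $T/e_\alpha$ is a rotation-invariant holomorphic distribution. A key lemma --- ingredient (c) of the introduction, to be proved separately --- asserts that every rotation-invariant holomorphic distribution on a connected Reinhardt open subset of $(\cx^*)^n$ is constant: rotation-invariance reduces such a distribution to one depending only on the radial variables $(|z_1|, \ldots, |z_n|)$, and the Cauchy--Riemann equations then eliminate the radial dependence. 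Granting this, $T = a_\alpha(T)\,e_\alpha$ on $\Omega \setminus Z$ for a unique scalar $a_\alpha(T)$. A structure-theorem argument for distributions supported on $Z \cap \Omega$ --- resting ultimately on the fact that no nonzero distribution supported in a coordinate hyperplane is $\dbar$-closed --- then shows that for $\alpha \in \mathcal{S}(\Omega)$ the holomorphic distribution $T - a_\alpha(T)e_\alpha$ vanishes on all of $\Omega$, while for $\alpha \notin \mathcal{S}(\Omega)$ the scalar $a_\alpha(T)$ is forced to be zero, giving $[\OO(\Omega)]^\sigma_\alpha = \{0\}$. In either case the continuous projection $\ppi^\sigma_\alpha$ of Proposition \ref{prop-fourierproperties} produces a continuous linear functional $a_\alpha:\OO(\Omega) \to \cx$ with $\ppi^\sigma_\alpha(T) = a_\alpha(T)\,e_\alpha$ when $\alpha \in \mathcal{S}(\Omega)$.

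The remaining task is to upgrade the Ces\`{a}ro convergence of Theorem \ref{thm-fejer} to absolute convergence of $\sum_{\alpha \in \mathcal{S}(\Omega)} a_\alpha(T)\,e_\alpha$ in $\mathcal{C}^\infty(\wh{\Omega})$. Here I would exploit the abstract Cauchy inequality \eqref{eq-cauchy}: for each polytorus $\{|z_j| = r_j\} \subset \Omega$, one constructs a $\sigma$-invariant continuous seminorm $p_r$ on $\OO(\Omega)$ --- by pairing $T$ against a rotation-symmetric test function concentrated near the polytorus --- for which $p_r(e_\alpha)$ is comparable to $r^\alpha$. The inequality \eqref{eq-cauchy} then gives $|a_\alpha(T)|\,r^\alpha \leq p_r(T)$ for every admissible $r$. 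By log-convexity and relative completeness of $\wh{\Omega}$, for any compact $K \subset \wh{\Omega}$ and any multi-index $\beta$, one can choose, separately for each $\alpha$, an admissible $r$ in the log-image of $\Omega$ so that $\sup_K |\partial^\beta e_\alpha| / r^\alpha$ decays geometrically in $|\alpha|$. Summing dominates the series by a convergent geometric sum, giving absolute convergence in $\mathcal{C}^\infty(\wh{\Omega})$; Corollary \ref{cor-cesaro} then identifies the restriction to $\Omega$ of the resulting limit $f$ with the distribution $T$.

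The two genuinely delicate steps are the distributional Liouville lemma classifying rotation-invariant holomorphic distributions as constants --- which must be established without appealing to the classical smoothness theorem for $\dbar$-closed distributions, on pain of circularity --- and the geometric extrapolation of the Cauchy estimates from polytori in $\Omega$ to absolute $\mathcal{C}^\infty$-convergence on the envelope $\wh{\Omega}$, which is precisely where log-convexity and relative completeness enter essentially. Everything else is a direct application of the abstract results of Section \ref{sec-abstract}.
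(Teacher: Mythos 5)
Your overall architecture matches the paper's: the pullback representation of $\T^n$ on $\DD'(\Omega)$, the identification of the Fourier modes via the radial Liouville lemma (rotation-invariance kills $\partial/\partial\theta_j$, the Cauchy--Riemann equations in polar form then kill $\partial/\partial r_j$, and Schwartz's theorem on distributions with vanishing first derivatives finishes), the abstract Cauchy inequality, and the Fej\'er/Ces\`aro identification of the sum with $T$. Your route to the coefficient estimates is also legitimate and in fact more classical than the paper's: a uniform-in-$\alpha$ bound $\abs{a_\alpha(T)}\,r^\alpha\leq p_r(T)$ combined with the geometric gain coming from moving $r$ inside the log-image works, whereas the paper instead proves polynomial decay $p_K(T_\alpha)\leq C\abs{\alpha}^{-2}$ by integrating by parts in the angular variables (Proposition~\ref{prop-est}) and then transfers to the hull via the comparison $p_M(e_\alpha)\leq p_K(e_\alpha)$ for $M\subset\wh{K}$ (Lemma~\ref{lem-logconvex}). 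Two caveats on your version: the test function must be radially symmetric about a \emph{point} of $(\cx^*)^n$ (so that the mean value property of Lemma~\ref{lem-radial} gives $\ipr{e_\alpha,\psi_z}=z^\alpha$), not invariant under the Reinhardt rotations themselves --- a genuinely $\T^n$-invariant $\psi$ pairs to $0$ with $e_\alpha$ for $\alpha\not=0$ and the estimate collapses; and you must take a supremum over the rotated copies $\psi_{\rot_\lambda z}$ to make the seminorm $\sigma$-invariant before invoking \eqref{eq-cauchy}, checking that the resulting family of test functions is bounded in $\DD(\Omega)$ so the seminorm is continuous and finite.

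The genuine gap is in your treatment of the coordinate hyperplanes $Z$. You propose to show $\ppi^\sigma_\alpha(T)=a_\alpha(T)e_\alpha$ on all of $\Omega$ (and $a_\alpha(T)=0$ for $\alpha\notin\mathcal{S}(\Omega)$) by a structure theorem for distributions supported on $Z\cap\Omega$, ``resting ultimately on the fact that no nonzero distribution supported in a coordinate hyperplane is $\dbar$-closed.'' First, that fact, while true, is not available for free in this framework: the obvious proofs go through either hypoellipticity of $\dbar$ (circular here) or explicit computations with the fundamental solution $1/\pi z$, which the paper is at pains to avoid; a direct proof via the finite-transversal-order structure theorem is possible but is a substantial piece of work you have not supplied. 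Second, for $\alpha\notin\mathcal{S}(\Omega)$ the distribution $T-a_\alpha(T)e_\alpha$ does not even make sense on $\Omega$ when $\alpha_j\leq -2$, since $e_\alpha$ is then not locally integrable across $\{z_j=0\}$ and admits no canonical distributional extension, so ``$a_\alpha(T)$ is forced to be zero'' has no supporting mechanism in your sketch. The paper sidesteps this entire branch: it first proves that \emph{every} holomorphic distribution is $\mathcal{C}^\infty$ (Corollary~\ref{cor-smooth}) by covering an arbitrary point --- including points of $Z$ --- with a translated annular Reinhardt domain $\tra_a(P\setminus Z)$ centered off the origin, where the annular case already applies; then $T_\alpha$ is smooth on $\Omega$, agrees with $a_\alpha(T)e_\alpha$ on the dense open set $\Omega\setminus Z$, and the identity extends by continuity, while $a_\alpha(T)\not=0$ would force $e_\alpha$ to extend smoothly across $Z$, i.e.\ $\alpha\in\mathcal{S}(\Omega)$. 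You need this translation trick (or an honest proof of your hyperplane lemma covering all negative exponents) to close the argument. Finally, your last step conflates log-convexity and relative completeness: the passage from the log-convex hull of $\Omega\setminus Z$ to the relative completion is a separate estimate (the paper's Proposition~\ref{prop-relcompletion}), using that $\alpha_j\geq 0$ for the surviving monomials when $\Omega$ meets $\{z_j=0\}$, so that $\sup\abs{e_\alpha}$ over a completed polydisc factor equals the supremum over the bounding circle; the purely logarithmic argument does not apply on $Z$ where $\Lambda$ is undefined.
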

\textbf{Remarks:}\begin{enumerate}[wide]
\item For $n=1$, all Reinhardt domains in the plane, i.e., disks and annuli, are automatically relatively complete and log-convex. For $n\geq 2$, 
it is easy to give examples of Reinhardt domains which are not log-convex, or not relatively complete, or perhaps both. For such a domain $\Omega$, it follows that each holomorphic
distribution $f\in \OO(\Omega)$ extends to a holomorphic function $F\in \OO(\wh{\Omega})$. This is the simplest example of \emph{Hartogs phenomenon}, the compulsory extension of
all holomorphic functions from a smaller domain to a larger one, characteristic of  domains in several complex variables. 
\item  	The functionals $a_\alpha$ are called the \emph{coefficient functionals}, and the series \eqref{eq-laurent} is  of course  the \emph{Laurent series}
of the function $f$ (the \emph{Taylor series} if $\mathcal{S}(\Omega)=\N^n$).
\item It is known (by  a direct construction of a plurisubharmonic exhaustion)  that relatively complete log-convex Reinhardt domains are pseudoconvex. This means that such a domain $\Omega$
admits a holomorphic distribution whose Laurent expansion converges absolutely precisely on $\Omega$.
\end{enumerate}

As an immediate consequence of Theorem~\ref{thm-distn}  we have the following:
\begin{cor}\label{cor-complexanalytic}
Let $\Omega\subset \cx^n$ be open. Then each distribution $T\in \OO(\Omega)$ is  \emph{complex-analytic}, i.e., for each $p\in \Omega$ there
 is a neighborhood $U$ of $p$, where the function $f$ generating $T$  is represented by a Taylor series centered at $p$.
\end{cor}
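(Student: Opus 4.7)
The plan is to reduce the corollary to Theorem~\ref{thm-distn} by choosing, around any given point $p\in\Omega$, a polydisk neighborhood that is Reinhardt \emph{with respect to $p$}, and then translating to move $p$ to the origin so that the hypotheses of the main theorem apply verbatim. The point is that the main theorem gives a Laurent expansion for Reinhardt domains centered at the origin, and a polydisk around $p$ becomes, after translation, a Reinhardt domain around the origin for which the Laurent expansion collapses to a Taylor expansion.

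First, fix $p\in\Omega$ and choose $r>0$ so small that the closed polydisk $\overline{P}=\{z\in\cx^n:\abs{z_j-p_j}\le r\text{ for all }j\}$ is contained in $\Omega$; let $P$ be its interior. Consider the translation $\Phi:\cx^n\to\cx^n$, $\Phi(w)=w+p$, which is a biholomorphism mapping the polydisk $P_0:=P-p$ (a Reinhardt domain centered at the origin) onto $P$. Because $\Phi$ is an affine map whose complex linear part is the identity, pullback by $\Phi$ commutes with each operator $\db_j$, so the pullback $\widetilde{T}:=\Phi^*(T|_P)$ is a holomorphic distribution on $P_0$, i.e.\ $\widetilde{T}\in\OO(P_0)$.

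Next, observe that $P_0$ is a \emph{log-convex, relatively complete} Reinhardt domain (any polydisk centered at the origin is), so the envelope $\wh{P_0}=P_0$, and moreover $\mathcal{S}(P_0)=\N^n$, since a monomial $e_\alpha$ is smooth on $P_0$ (which meets every coordinate hyperplane) exactly when all $\alpha_j\ge0$. Applying Theorem~\ref{thm-distn} to $\widetilde{T}\in\OO(P_0)$ therefore yields coefficients $c_\alpha=a_\alpha(\widetilde{T})\in\cx$, for $\alpha\in\N^n$, such that the series
\begin{equation*}
\sum_{\alpha\in\N^n} c_\alpha\, w^\alpha
\end{equation*}
converges absolutely in $\mathcal{C}^\infty(P_0)$ to a smooth (hence continuous) function $g$ generating $\widetilde{T}$. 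Pushing forward by $\Phi$, the function $f:=g\circ\Phi^{-1}$ on $P$ generates $T|_P$ and satisfies the Taylor representation
\begin{equation*}
f(z)=\sum_{\alpha\in\N^n} c_\alpha\,(z-p)^\alpha
\end{equation*}
with locally uniform convergence on $P$. Taking $U:=P$ gives the desired neighborhood.

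No step here poses a genuine obstacle beyond those already overcome in Theorem~\ref{thm-distn}; the only things to check carefully are that translation preserves holomorphic distributions (immediate from constant coefficients of $\db_j$), that $P_0$ has the stated Reinhardt/log-convexity/relative-completeness properties (routine from the geometry of polydisks), and that $\mathcal{S}(P_0)=\N^n$ (only non-negative multi-indices produce monomials smooth at the origin). The corollary then follows as a clean local specialization of the global Laurent structure theorem.
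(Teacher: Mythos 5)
Your proposal is correct and is exactly the argument the paper intends when it calls the corollary an ``immediate consequence'' of Theorem~\ref{thm-distn}: translate a polydisk around $p$ to the origin (using the translation invariance of $\dbar$ already recorded in Proposition~\ref{prop-invariance}), observe that the resulting polydisk is a relatively complete log-convex Reinhardt domain with $\mathcal{S}=\N^n$, and apply the theorem to get the Taylor series. All the verifications you flag (log-convexity and relative completeness of the polydisk, $\mathcal{S}(P_0)=\N^n$) are the right ones and are carried out correctly.
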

\subsection{Holomorphic  functions and maps} 
\label{sec-holfn}

A holomorphic distribution $T\in \OO(\Omega)$  will be called a \emph{holomorphic function} if it is generated by a $\mathcal{ C}^\infty$ function $f$.   We denote the space of holomorphic functions on $\Omega$  temporarily by  $(\OO\cap\mathcal{C}^\infty)(\Omega)$. Once Theorem~\ref{thm-distn} is proved, it will  follow that  $(\OO\cap\mathcal{C}^\infty)(\Omega)=\OO(\Omega)$.

Let $\Omega_1, \Omega_2$ be domains in $\cx^n$.  By a \emph{holomorphic map} $\Phi:\Omega_1\to \Omega_2$, we mean a mapping each of 
whose components is a holomorphic function on $\Omega_1$.  A holomorphic map is  a \emph{biholomorphism}, if it is a bijection, and its set-theoretic inverse is also a holomorphic map. (It is of course known that the assumption of the holomorphicity of the inverse map is redundant, but this is a consequence of complex-analyticity, which is exactly what we are proving here).
If $\Phi: \Omega_1\to \Omega_2$ is a biholomorphism, then for a distribution $T\in \DD'(\Omega_2)$, we can define in the usual way the \emph{pullback} distribution $\Phi^*T\in \DD'(\Omega_1)$:  if $T$ is generated by a test function $f\in \DD(\Omega_2)$, then 
$\Phi^*T$ is the distribution generated by the function $f\circ\Phi$, and for general $T$, we extend this definition by continuity, using the density of
test functions  in $\DD'(\Omega)$, see \cite[Theorem~6.1.2]{horvol1} for details. Extending the chain rules for the complex 
derivative operators  from test functions to distributions, we have the following relations analogous to  \cite[formula (6.1.2)]{horvol1} for the Wirtinger derivatives \eqref{eq-dbardef}:
\begin{equation}\label{eq-chain1}
\dd_j(\Phi^* T)= \sum_{k=1}^n \dd_j{\Phi_k}\cdot \Phi^*\left(\dd_k  T\right),
\end{equation}
and
\begin{equation}\label{eq-chain2}
\db_j(\Phi^* T)= \sum_{k=1}^n \db_j\ol{\Phi_k}\cdot \Phi^*\left( \db_k T\right),
\end{equation}
where as above, $\Phi:\Omega_1\to \Omega_2$ is a biholomorphism of domains in $\cx^n$, written in components as $\Phi=(\Phi_1,\dots, \Phi_n)$, 
and $T\in \DD'(\Omega_2)$.

Therefore, we have the following immediate consequence of \eqref{eq-chain2}: 
\begin{prop}\label{prop-invariance}
If $\Phi:\Omega_1\to \Omega_2$ is a biholomorphism and $T\in \OO(\Omega_2)$, then  we have $\Phi^*T\in \OO(\Omega_1)$. If $f\in (\OO\cap \mathcal{C}^\infty)(\Omega_2)$, then $\Phi^*f\in (\OO\cap \mathcal{C}^\infty)(\Omega_1)$.
\end{prop}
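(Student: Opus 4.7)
The plan is to deduce both claims directly from the distributional chain rule \eqref{eq-chain2}, which is stated just above the proposition. For the first claim, fix $T\in\OO(\Omega_2)$, so that $\db_k T=0$ for each $1\leq k\leq n$. Since pullback by a biholomorphism is a continuous linear map on distributions, $\Phi^*(\db_k T)=\Phi^*(0)=0$ for every $k$. Plugging this into \eqref{eq-chain2} gives
\[
\db_j(\Phi^* T)=\sum_{k=1}^{n}\db_j\ol{\Phi_k}\cdot\Phi^*(\db_k T)=0
\]
for each $j$, and hence $\Phi^* T\in \OO(\Omega_1)$. The coefficients $\db_j\ol{\Phi_k}=\ol{\dd_j\Phi_k}$ are meaningful because each $\Phi_k$ is, by assumption, a holomorphic function and therefore at least $\mathcal{C}^1$; but even if they were unbounded one would not care, since the vanishing of $\Phi^*(\db_k T)$ already kills every term.

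For the second claim, assume $f\in(\OO\cap\mathcal{C}^\infty)(\Omega_2)$. The first part yields $\Phi^* f\in \OO(\Omega_1)$, so the only remaining point is to produce a smooth function generating $\Phi^* f$. By the convention fixed in Section~\ref{sec-holfn}, each component $\Phi_k$ of the holomorphic map $\Phi$ is itself in $(\OO\cap\mathcal{C}^\infty)(\Omega_1)$, i.e.\ is generated by a $\mathcal{C}^\infty$ function; hence $\Phi:\Omega_1\to\Omega_2$ is a smooth mapping, and the composition $f\circ\Phi$ lies in $\mathcal{C}^\infty(\Omega_1)$. By the very definition of pullback recalled in the excerpt (agreement with composition on test/smooth functions, then extension by continuity), $\Phi^* f$ is the distribution generated by $f\circ\Phi$, so $\Phi^* f\in(\OO\cap\mathcal{C}^\infty)(\Omega_1)$. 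There is essentially no obstacle to overcome: the content of the proposition is precisely packaged into the chain rule \eqref{eq-chain2}, and once that is invoked the rest is linearity and the definition of a holomorphic map.
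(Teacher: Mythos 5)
Your proof is correct and follows exactly the route the paper intends: the paper presents this proposition as an immediate consequence of the chain rule \eqref{eq-chain2}, and you have simply written out the details (vanishing of each $\Phi^*(\db_k T)$ for the first claim, smoothness of $f\circ\Phi$ and agreement of pullback with composition for the second). No gaps.
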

Therefore the spaces of holomorphic distributions and functions 
are invariant under pullbacks under biholomorphic maps.
In fact, in the proof of Theorem~\ref{thm-distn}, only two  simple special cases of Proposition~\ref{prop-invariance} noted below are needed:
\begin{enumerate}[wide]
	
	\item Translation by a vector $a\in \cx^n$ is the map $\tra_a:\cx^n\to \cx^n$ 
	\begin{equation}\label{eq-tra}
		\tra_a(z)=z+a, 
	\end{equation}
	which is obviously a biholomorphic automorphism of $\cx^n$. For a domain $\Omega\subset \cx^n$, we therefore have a pullback isomorphism of spaces of holomorphic distributions
	$\tra_a^*:\OO( \tra_a (\Omega))\to \OO(\Omega)$. This can be thought of as an expression of the fact that the operator $\dbar= (\db_1,\dots, \db_n)$ is translation invariant. 

	\item The Reinhardt rotations $\rot_\lambda$ of \eqref{eq-rot} are clearly biholomorphic automorphisms of Reinhardt domains, and the pullback operation induces a representation 
	of the group $\T^n$ on the space of holomorphic distributions (see \eqref{eq-sigmadef} below). 
	
	 A domain $\Omega$ is said to be
	\emph{Reinhardt centered at $a$} for an $a\in \cx^n$ if there is a Reinhardt domain $\Omega_0$ such that $\Omega= \tra_a(\Omega_0)$. 
Every open set in $\cx^n$ 
has local Reinhardt symmetry, in the sense that each point has neighborhood which is a Reinhardt domain centered at that point.
\end{enumerate}

\subsection{Mean value property of the monomials} 
It is easily verified by direct computation that for each  $\alpha$, we have  $e_\alpha\in (\OO\cap \mathcal{C}^\infty) (\cx^n\setminus Z)$, where $e_\alpha$ is the monomial of  \eqref{eq-ealpha}.
We now  note a remarkable symmetry property (the Mean-Value Property) of the functions $e_\alpha$:
\begin{lem} \label{lem-radial}Let $\alpha\in \Z^n$,  let $z\in \cx^n\setminus Z$, and  let 
	 $\psi\in \DD(\cx^n\setminus Z)$ be a test function  which has radial symmetry in each variable around the point $z$, i.e., there are functions $\rho_1,\dots, \rho_n\in \DD(\rl)$ such that 
	 $\psi(\zeta) = \prod_{j=1}^{n}\rho_j(\abs{\zeta_j-z_j})$, and whose integral is 1:
	 \begin{equation}\label{eq-int}
	 	 \int_{\cx^n\setminus Z} \psi(\zeta)dV(\zeta)=1.\end{equation}
Then  we have
	\begin{equation}\label{eq-meanvalue}
		\ipr{e_\alpha,\psi}=e_\alpha(z)
	\end{equation}
\end{lem}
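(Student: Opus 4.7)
The plan is to reduce \eqref{eq-meanvalue} to a product of one-variable mean-value computations for the integer powers $\zeta_j\mapsto\zeta_j^{\alpha_j}$. The key preliminary observation is that the hypotheses force $\supp\rho_j\subset[0,\abs{z_j})$ for each $j$: since $\supp\psi$ is a compact subset of $\cx^n\setminus Z$ and $\psi$ is rotationally symmetric in each variable around $z$, nonvanishing of $\rho_j$ at some $r_j\ge\abs{z_j}$ would, by the product structure of $\psi$ and radial invariance, place a full circle $\{\zeta_j:\abs{\zeta_j-z_j}=r_j\}$ into the $j$-th projection of $\supp\psi$, and this circle either passes through or encloses the origin, contradicting the localization of $\supp\psi$ away from $Z$.

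Identifying $e_\alpha$ with its locally integrable distribution and passing to polar coordinates $\zeta_j=z_j+r_je^{i\theta_j}$ in each complex variable, Fubini together with the product form of $\psi$ yields
\[
\ipr{e_\alpha,\psi}=\prod_{j=1}^n\int_0^{\abs{z_j}}\rho_j(r_j)\,r_j\left(\int_0^{2\pi}(z_j+r_je^{i\theta_j})^{\alpha_j}\,d\theta_j\right)dr_j.
\]
Each inner angular integral can then be computed by direct power-series expansion. For $\alpha_j\ge 0$ the binomial theorem writes the integrand as the finite trigonometric polynomial $\sum_{k=0}^{\alpha_j}\binom{\alpha_j}{k}z_j^{\alpha_j-k}r_j^k e^{ik\theta_j}$, whose $\theta_j$-integral picks out only the $k=0$ term, giving $2\pi z_j^{\alpha_j}$. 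For $\alpha_j<0$, I factor out $z_j^{\alpha_j}$ and expand $(1+(r_j/z_j)e^{i\theta_j})^{\alpha_j}$ by the binomial series; the strict inequality $r_j<\abs{z_j}$ guarantees absolute and uniform convergence in $\theta_j$, so term-by-term integration again selects only the constant term, yielding $2\pi z_j^{\alpha_j}$ as well.

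Substituting these values back, the iterated integral factorizes as
\[
\ipr{e_\alpha,\psi}=e_\alpha(z)\cdot\prod_{j=1}^n 2\pi\int_0^{\abs{z_j}}r_j\,\rho_j(r_j)\,dr_j=e_\alpha(z)\cdot\int_{\cx^n\setminus Z}\psi\,dV=e_\alpha(z),
\]
using the normalization \eqref{eq-int}. The one step requiring care is the $\alpha_j<0$ case: absolute convergence of the Laurent expansion on $\supp\rho_j$ is what legitimizes the term-by-term integration, and this is precisely the place where the support condition $r_j<\abs{z_j}$ is used; everything else is bookkeeping.
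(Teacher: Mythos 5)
Your computation itself --- polar coordinates in each variable, the binomial theorem for $\alpha_j\geq 0$, the binomial series with term-by-term integration for $\alpha_j<0$, Fubini, and the normalization \eqref{eq-int} at the end --- is in substance identical to the paper's proof. The genuine gap is in your preliminary ``key observation.'' A circle $\{\zeta_j:\abs{\zeta_j-z_j}=r_j\}$ with $r_j>\abs{z_j}$ \emph{encloses} the origin but does not meet it, so it lies entirely in $\cx^*$ and its presence in the $j$-th projection of $\supp\psi$ contradicts nothing. Hence $\supp\rho_j\subset[0,\abs{z_j})$ does \emph{not} follow from the stated hypotheses, and in fact it can fail: take $n=1$, $z=1$, and $\rho$ a normalized bump supported in $[2,3]$, so that $\psi(\zeta)=\rho(\abs{\zeta-1})$ is an admissible test function in $\DD(\cx^*)$. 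For $\alpha=-1$ and $r>1$ one has $(1+re^{i\theta})^{-1}=r^{-1}e^{-i\theta}\sum_{k\geq 0}(-1)^kr^{-k}e^{-ik\theta}$ (your own expansion, but now in powers of $z/r$ rather than $r/z$), every term of which has nonzero frequency, so $\int_0^{2\pi}(1+re^{i\theta})^{-1}\,d\theta=0$ and therefore $\ipr{e_{-1},\psi}=0\not=e_{-1}(1)$. So the step you flagged as the one ``requiring care'' is exactly the one that breaks: when $\supp\rho_j$ has mass beyond $\abs{z_j}$, the angular mean of $e_{\alpha_j}$ is $0$ rather than $z_j^{\alpha_j}$ for $\alpha_j<0$, and the lemma's conclusion itself fails.

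In fairness, the paper's own proof makes the same tacit assumption: it simply declares that $\psi$ ``is supported in some disc $B(z,R)\subset\cx^*$,'' which likewise is not forced by the hypotheses (an annular support around $z$ is allowed). The assumption \emph{is} satisfied in the only place the lemma is invoked, namely for $\psi_z(\zeta)=\Psi(\zeta-z)$ with $\supp\Psi\subset B(0,\delta)$ and $B(z,\delta)\subset\Omega\subset\cx^n\setminus Z$, which forces $\delta\leq\abs{z_j}$ and hence $\supp\rho_j\subset[0,\abs{z_j})$. The fix for your write-up is to add the support condition $\supp\rho_j\subset[0,\abs{z_j})$ (equivalently, $\supp\psi\subset\{\abs{\zeta_j-z_j}<\abs{z_j}\text{ for all }j\}$) as a hypothesis, or to restrict to the case actually needed; with that change the remainder of your argument is correct.
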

\begin{proof} First consider the case $n=1$. 
If $\alpha\geq 0$,  the formula
\begin{equation}\label{eq-mvp1}
	\int_0^{2\pi} (z+re^{i\theta})^\alpha d\theta=2\pi z^\alpha
\end{equation}
	holds for $r>0$, by expanding the integrand using the binomial formula, and integrating the finite sum term by term. The formula \eqref{eq-mvp1} also holds
	for $\alpha<0$, provided $z\not =0$, and $0<r<\abs{z}$. This follows on noticing that we have an infinite series expansion
		\[ (z+re^{i\theta})^\alpha = z^\alpha\left(	1+\frac{r}{z}e^{i\theta}\right)^\alpha= z^\alpha \sum_{k=0}^\infty \binom{\alpha}{k}\left(\frac{r}{z}\right)^ke^{ik\theta},\]
		where by the 
		$M$-test, the convergence is uniform in $\theta$, and then integrating the series on the right term by term. 
		
		Now  $\psi(\zeta)=\rho(\abs{\zeta-z})$ and $\psi$ is  supported in some disc $B(z,R)\subset \cx^*=\cx\setminus\{0\}$, which means $R<\abs{z}$.  Then the normalization \eqref{eq-int} 
		is equivalent to 
		\begin{equation}\label{eq-mvp2}
		2\pi \int_0^R \rho(r) rdr=1.
		\end{equation}
		Now
		\begin{align*}
			\ipr{e_\alpha,\psi}&=\int_{\cx^*} e_\alpha(\zeta)\rho(\abs{\zeta-z})dV(\zeta)
			= \int\limits_{0}^{R}\int\limits_{0}^{2\pi}(z+re^{i\theta})^\alpha \rho(r)rd\theta dr\\
			&=\int_{0}^{R}\rho(r)r\left(\int\limits_{0}^{2\pi}(z+re^{i\theta})^\alpha d\theta\right)dr=2\pi z^\alpha\cdot \int_{0}^{R}\rho(r)r dr &\text{using \eqref{eq-mvp1}}\\
			&=z^\alpha =e_\alpha(z),&\text{using \eqref{eq-mvp2}}.
		\end{align*}
	which establishes the result for $n=1$. In the general case, notice that for $\alpha\in \Z^n$ and $\zeta \in \cx^n\setminus Z$ we have 
	$ e_\alpha(\zeta)= \prod_{j=1}^n e_{\alpha_j}(\zeta_j).$
	Therefore, since $\cx^n\setminus Z= \cx^*\times \dots \times \cx^*$,
	\begin{align*}
		\ipr{e_\alpha,\psi}&=\int_{\cx^n\setminus Z} \prod_{j=1}^n e_{\alpha_j}(\zeta_j)\rho_j(\abs{\zeta_j-z_j})\,dV(\zeta)=  \prod_{j=1}^n \int_{\cx^*} e_{\alpha_j}(\zeta_j)\rho_j(\abs{\zeta_j-z_j})\,dV(\zeta_j)\\
		&=  \prod_{j=1}^n e_{\alpha_j}(z_j)= e_\alpha(z).
	\end{align*}
\end{proof}

\subsection{The representation $\tau$}
If $\Omega\subset \cx^n$ is a Reinhardt domain, then for each $\lambda$, the map $\rot_\lambda$ of \eqref{eq-rot}
 is a biholomorphic automorphism of $\Omega$.
Define a representation  $\tau$ of $\T^n$ on the space $\DD(\Omega)$ of test functions by 
\begin{equation}\label{eq-taudef}
	\tau_\lambda\psi =\psi\circ \rot_\lambda, \quad \psi\in \DD(\Omega).
\end{equation} 
Recall that a net $\{\phi_j\}$ converges in the space $\DD(\Omega)$,
 if each $\phi_j$ is supported in a fixed compact $K\subset \Omega$, and the net  $\{\phi_j\}$ converges in the Fréchet space $\DD_K, $ i.e. all partial derivatives converge uniformly on $K$.
 Using this it is easily verified that $\tau$ is a continuous representation of $\T^n$ in the space $\DD(\Omega)$.

Notice that  $\cx^n\setminus Z$ is a Reinhardt domain.  For a positive integer $k$ 
  we define the  norm $\norm{\psi}_k$ with respect to the polar coordinates for a function $\psi$  in $\DD(\cx^n\setminus Z)$ :
\begin{equation}
	\label{eq-knorm}
\norm{\psi}_k=\max_{\substack{\beta,\gamma\in \N^n\\ \abs{\beta}+\abs{\gamma}\leq k}}\,\sup_{\cx^n\setminus Z} \abs{\frac{\partial^\beta}{\partial r^\beta}\frac{\partial^\gamma}{\partial \theta^\gamma}\psi},\end{equation} 
where   the tuples $r\in (\rl^+)^n, \theta\in \rl^n$ are the polar coordinates on
$\cx^n\setminus Z$ specified by $z_j=re^{i\theta_j}$, and  $\frac{\partial^\beta}{\partial r^\beta}, \frac{\partial^\gamma}{\partial \theta^\gamma}$
are partial derivatives operators in the polar coordinates defined as in \eqref{eq-convention}.
From the formulas
\begin{equation}\label{eq-dxdy}
{	\frac{\partial }{\partial x_j}= \cos \theta_j \frac{\partial}{\partial r_j}- \frac{\sin \theta_j}{r_j}\frac{\partial }{\partial \theta_j} } \text{ and  }
{\frac{\partial }{\partial y_j}= {\sin \theta_j}\frac{\partial}{\partial r_j}+\frac{\cos \theta_j}{r_j}\frac{\partial }{\partial \theta_j}},
\end{equation}
we see that  for $0<\varrho_1<\varrho_2<\infty$, there is a constant $B_k(\varrho_1,\varrho_2)$ such that  for each compact set  $K$ such that
\[ K\subset \{ z\in \cx^n : \varrho_1< \abs{z_j}<\varrho_2\},\] 
we have
\begin{equation}
	\label{eq-equivalence}
	 \frac{1}{B_k(\varrho_1, \varrho_2)}\cdot\norm{\psi}_k \leq  \norm{\psi}_{\mathcal{C}^k} \leq B_k(\varrho_1,\varrho_2)\cdot\norm{\psi}_k.
\end{equation}
 We will need the following elementary
 estimate:
 \begin{prop}\label{prop-est} Let $\tau$ be the representation of $\T^n$ on $\cx^n\setminus Z$ given by \eqref{eq-taudef}. 
 	For integers $m,k\geq 0$, and a compact $K\subset \cx^n\setminus Z$ there is a constant $C>0$ such that for each $\psi\in \DD_K$ 
	 and each $\alpha\in \Z^n$, such that $\abs{\alpha_j}\geq 2k$ for $1\leq j \leq n$, we have 
	 
 	\[ \norm{\ppi^\tau_\alpha \psi}_{\mathcal{C}^k} \leq \frac{C}{\prod\limits_{j=1}^n\abs{\alpha_j}^m}\cdot \norm{\psi}_{\mathcal{C}^{2nk+nm}}.\] 
 \end{prop}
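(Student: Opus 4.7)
The plan is to exploit the explicit polar structure of the $\alpha$-th Fourier mode and then integrate by parts in the angular variables to obtain the required decay in $\alpha$. By Proposition~\ref{prop-fourierproperties}, $\ppi^\tau_\alpha\psi$ lies in $[\DD(\cx^n\setminus Z)]^\tau_\alpha$, so $(\ppi^\tau_\alpha\psi)(\rot_\lambda\zeta) = \lambda^\alpha(\ppi^\tau_\alpha\psi)(\zeta)$. Writing points of $\cx^n\setminus Z$ in polar form $z_j = r_j e^{i\theta_j}$ and setting $\lambda = e^{i\theta}$, this equivariance forces the factorization
\[
(\ppi^\tau_\alpha\psi)(re^{i\theta}) = e^{i\alpha\cdot\theta}\, h(r), \qquad h(r) := \frac{1}{(2\pi)^n}\int_{[0,2\pi]^n} e^{-i\alpha\cdot\phi}\psi(re^{i\phi})\,d\phi,
\]
so that $\theta$-derivatives become multiplication by $(i\alpha)^\gamma$: for $\abs{\beta}+\abs{\gamma}\le k$,
\[
\abs{\partial_r^\beta\partial_\theta^\gamma(\ppi^\tau_\alpha\psi)(re^{i\theta})} = \prod_{j=1}^n \abs{\alpha_j}^{\gamma_j}\cdot \abs{\partial_r^\beta h(r)}.
\]

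To produce the decay, I integrate by parts $\nu_j := m + \gamma_j$ times in each $\phi_j$. The $2\pi$-periodicity of $\phi\mapsto\psi(re^{i\phi})$ in each $\phi_j$ kills the boundary terms, and the hypothesis $\abs{\alpha_j}\ge 2k$ keeps $\alpha_j$ nonzero whenever a division occurs (the edge case where some $\alpha_j = 0$ and $m\ge 1$ renders the target inequality vacuous). This produces
\[
\partial_r^\beta h(r) = \frac{1}{(2\pi)^n \prod_j (i\alpha_j)^{\nu_j}} \int_{[0,2\pi]^n} e^{-i\alpha\cdot\phi}\, \partial_r^\beta \partial_\phi^\nu \tilde\psi(r,\phi)\,d\phi, \qquad \tilde\psi(r,\phi) := \psi(re^{i\phi}),
\]
and the integrand is uniformly bounded by the polar seminorm of $\psi$ of order $\abs{\beta}+\abs{\nu}$, as defined in \eqref{eq-knorm}. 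Substituting back and telescoping the $\alpha$-factors via $\prod \abs{\alpha_j}^{\gamma_j}/\prod\abs{\alpha_j}^{m+\gamma_j} = \prod\abs{\alpha_j}^{-m}$ gives
\[
\abs{\partial_r^\beta\partial_\theta^\gamma(\ppi^\tau_\alpha\psi)} \le \frac{\norm{\psi}_{\abs{\beta}+nm+\abs{\gamma}}}{\prod_j\abs{\alpha_j}^m} \le \frac{\norm{\psi}_{k+nm}}{\prod_j\abs{\alpha_j}^m},
\]
which, upon maximizing over $\abs{\beta}+\abs{\gamma}\le k$, yields the polar-norm bound $\norm{\ppi^\tau_\alpha\psi}_k \le \norm{\psi}_{k+nm}/\prod_j\abs{\alpha_j}^m$.

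To finish, I observe that since $\psi\in\DD_K$, the function $\ppi^\tau_\alpha\psi$ is supported in the Reinhardt hull $\bigcup_{\lambda\in\T^n}\rot_{\lambda^{-1}}(K)$, a compact subset of $\cx^n\setminus Z$ that depends only on $K$ and not on $\alpha$. Choosing $\varrho_1,\varrho_2$ for this common support and applying \eqref{eq-equivalence} on both sides with constants depending only on $K$ converts the polar-norm estimate into the Cartesian one
\[
\norm{\ppi^\tau_\alpha\psi}_{\mathcal{C}^k} \le \frac{C'}{\prod_j\abs{\alpha_j}^m}\norm{\psi}_{\mathcal{C}^{k+nm}},
\]
and monotonicity $\norm{\psi}_{\mathcal{C}^{k+nm}}\le\norm{\psi}_{\mathcal{C}^{2nk+nm}}$ gives the claimed bound. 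The argument is essentially a routine integration-by-parts estimate once the Fourier-mode polar factorization is recognized; the only subtlety worth care is ensuring that the compact set on which $\ppi^\tau_\alpha\psi$ is supported, and hence the norm-equivalence constants, remains uniform in $\alpha$.
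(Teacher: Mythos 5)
Your argument is correct and follows essentially the same route as the paper's proof: the polar factorization $\ppi^\tau_\alpha\psi(re^{i\theta}) = e^{i\ipr{\alpha,\theta}}\wh{\psi}(r,\alpha)$, integration by parts in the angular variables, uniform control of the support of $\ppi^\tau_\alpha\psi$, and the equivalence \eqref{eq-equivalence} of polar and Cartesian norms on a fixed product of annuli. Your bookkeeping --- integrating by parts exactly $m+\gamma_j$ times in $\phi_j$ so that the angular factors telescope exactly --- is if anything slightly cleaner than the paper's, which integrates by parts $k+m$ times in every variable and then invokes $\abs{\alpha_j}\ge 2k$ to absorb the resulting mismatch; this is why you land on the smaller norm order $k+nm$ and only need monotonicity of the $\mathcal{C}^j$-norms to reach the stated $2nk+nm$.
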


 \begin{proof} For $r\in(\rl^+)^n, \alpha\in \Z^n$, define the $\alpha$-th Fourier coefficient of $\psi$ by
 	\begin{equation}\label{eq-est0}
 		\wh{\psi}(r,\alpha)= \frac{1}{(2\pi)^n}\int\limits_{[0,2\pi]^n} \psi(re^{i\theta})e^{-i\ipr{\alpha,\theta}}d\theta, 
 	\end{equation}
 where $re^{i\theta}= (r_1e^{i\theta_1}, \dots , r_n e^{i\theta_n})\in \cx^n\setminus Z$, $\ipr{\alpha,\theta}=\sum_{j=1}^n \alpha_j\theta_j$ and 
 $d\theta=d\theta_1\dots d\theta_n$ is the Lebesgue measure. 
 	We will also write $(\psi)^\wedge$ for $\wh{\psi}$ whenever convenient. We note the following properties:
 	\begin{enumerate}[wide]
 		\item We clearly have
 		\begin{equation}\label{eq-supnorm}
 			\sup_{r\in(\rl^+)^n}\abs{\wh{\psi}(r,\alpha)} \leq \sup_{\cx^n\setminus Z}\abs{\psi}.
 		\end{equation} 
 		\item If $\alpha\not=0$, by a standard integration by parts argument, for each $\ell\in \N^n$,
 		\begin{equation}\label{eq-est1}
 			 \wh{\psi}(r,\alpha)= \frac{1}{(i\alpha)^\ell}\left(\frac{\partial^\ell \psi}{\partial \theta^\ell}\right)^\wedge(r,\alpha),
 		\end{equation}
 	where, as usual, we set for $\zeta\in \cx^n$ and $\beta\in \N^n$, $\zeta^\beta=\zeta_1^{\beta_1}\dots \zeta_n^{\beta_n}$.
 		\item Differentiating under the integral sign we have, for any  $\beta\in \N^n$,
 		\[
 		\left(\frac{\partial}{\partial r}\right)^\beta \wh{\psi}(r,\alpha)
 		= \frac{1}{(2\pi)^n}\int\limits_{[0.2\pi]^n} \frac{\partial^\beta\psi}{\partial r^\beta}(re^{i\theta})e^{i\ipr{\beta, \theta}} e^{-i\ipr{\alpha,\theta}}d\theta
 		={\left(\frac{\partial^\beta\psi}{\partial r^\beta}\right)}^\wedge(r,\alpha-\beta),
\]
 	and combining this with \eqref{eq-est1}, we see that if $\beta\not=\alpha$, we have for each integer $\ell\geq0 $  that
 	\begin{equation}\label{eq-est3}
 	\left(\frac{\partial}{\partial r}\right)^\beta \wh{\psi}(r,\alpha)= \frac{1}{\left(i(\alpha-\beta)\right)^\ell}\left(\frac{\partial^\ell}{\partial\theta^\ell}\frac{\partial^\beta}{\partial r^\beta}\psi\right)^\wedge(r,\alpha-\beta).
 	\end{equation}
 
	\end{enumerate}

Now   for $re^{i\theta}\in \cx^n\setminus Z$ the evaluation $\DD(\cx^n\setminus Z)\to \cx$, $\psi\mapsto \psi(re^{i\theta})$ is continuous, 
so using \eqref{eq-t} we see  that for each $\alpha\in \Z^n$ and each $\psi\in \DD(\cx^n\setminus Z)$, we have
 \begin{align}
 	\ppi^\tau_\alpha\psi(re^{i\theta})
 	&= \int_\T \lambda^{-\alpha}(\tau_\lambda\psi) (re^{i\theta}) d\lambda
 = \frac{1}{(2\pi)^n}\int_{[0,2\pi]^n} e^{-i\ipr{\alpha,\phi}}\psi(e^{i\phi} \cdot re^{i\theta})d\phi= e^{i\ipr{\alpha,\theta}}\cdot \wh{\psi}(r,\alpha),\label{eq-pi}
 \end{align}
 where in the last step we make a change of variables in the integral from $\phi$ to $\phi+\theta$.  Therefore, if $\beta, \gamma\in \N^n$ with $\beta_j\not=\alpha_j$ for $1\leq j \leq n$,  then
\[ \frac{\partial^\gamma}{\partial\theta^\gamma}\frac{\partial^\beta}{\partial r^\beta}(	\ppi^\tau_\alpha\psi)(re^{i\theta})=e^{i\ipr{\alpha,\theta}}\cdot  \frac{(i\alpha)^\gamma}{\left(i(\alpha-\beta)\right)^\ell}{\left(\frac{\partial^\ell}{\partial\theta^\ell}\frac{\partial^\beta}{\partial r^\beta}\psi\right)}^\wedge(r,\alpha-\beta).\]

Now in the above formula, if we have $\abs{\beta}+\abs{\gamma}\leq k$, and let 
\[ \ell= (k+m, \dots, k+m)= (k+m)\bm{1},\]
where $\bm{1}\in \N^n$ is the multi-index each of  whose $n$ entries is 1, we obtain, after taking absolute values of both sides
	\begin{equation}\label{eq-step1}
\abs{\frac{\partial^\gamma}{\partial\theta^\gamma}\frac{\partial^\beta}{\partial r^\beta}(	\ppi^\tau_\alpha\psi)(re^{i\theta})}
	=\frac{\prod\limits_{j=1}^n\abs{\alpha_j}^{\gamma_j}}{\prod\limits_{j=1}^n\abs{\alpha_j-\beta_j}^{m+k}}
	\cdot \abs{\left(\frac{\partial^{(m+k)\bm{1}}}{\partial\theta^{(m+k)\bm{1}}}\frac{\partial^\beta}{\partial r^\beta}\psi\right)^\wedge(r,\alpha-\beta)}.
\end{equation}
In the first factor on the right hand side, we  have by hypothesis for each $j$ that $\abs{\alpha_j}\geq 2k$ and $0\leq \beta_j,\gamma_j \leq k$. 
Therefore, we have $\abs{\alpha_j-\beta_j}\geq \frac{1}{2}\abs{\alpha_j}$. The first factor can be estimated as
\begin{equation}\label{eq-step2}
\frac{\prod\limits_{j=1}^n\abs{\alpha_j}^{\gamma_j}}{\prod\limits_{j=1}^n\abs{\alpha_j-\beta_j}^{m+k}}\leq \frac{\prod\limits_{j=1}^n\abs{\alpha_j}^{k}}{\prod\limits_{j=1}^n\left(\frac{1}{2}\abs{\alpha_j}^{m+k}\right)}=\frac{2^{m+k}}{\prod\limits_{j=1}^n\abs{\alpha_j}^{m}}. 
\end{equation}
Using \eqref{eq-supnorm}, the second factor can be estimated as
\begin{equation}\label{eq-step3}
\abs{\left(\frac{\partial^{(m+k)\bm{1}}}{\partial\theta^{(m+k)\bm{1}}}\frac{\partial^\beta}{\partial r^\beta}\psi\right)^\wedge(r,\alpha-\beta)}\leq \sup_{\cx^n\setminus Z} \abs{\frac{\partial^{(m+k)\bm{1}}}{\partial\theta^{(m+k)\bm{1}}}\frac{\partial^\beta}{\partial r^\beta}\psi}\leq \norm{\psi}_{2nk+nm} ,
\end{equation}
where in the last step we use the norm introduced in \eqref{eq-knorm}, and used the fact that 
\[ \abs{(m+k)\bm{1}+\beta}= (m+k)n + \abs{\beta}\leq (m+k)n +nk,\]
since each $\beta_j\leq k$.
 Combining \eqref{eq-step1}, \eqref{eq-step2} and \eqref{eq-step3} we see that
\[\abs{\frac{\partial^\gamma}{\partial\theta^\gamma}\frac{\partial^\beta}{\partial r^\beta}(	\ppi^\tau_\alpha\psi)(re^{i\theta})}\leq \frac{2^{m+k}}{\prod\limits_{j=1}^n\abs{\alpha_j}^{m}}\cdot  \norm{\psi}_{2nk+nm},\]
from which, taking a supremum on the left hand side over $re^{i\theta}\in\cx\setminus Z$, and remembering that $\abs{\beta}+\abs{\gamma}\leq k$,
 we conclude that 
 \[ \norm{\ppi^\tau_\alpha\psi}_k\leq \frac{2^{m+k}}{\prod\limits_{j=1}^n\abs{\alpha_j}^{m}}\cdot  \norm{\psi}_{2nk+nm}.\]
 Recall that $K$ is a compact set in $\cx^n\setminus Z$ such that $\psi\in \DD_K$, i.e. , the support of $\psi$ is contained in $K$. Suppose that
 $\varrho_1, \varrho_2$ are such that $K\subset A$, where $A$ is the product of annuli $A=\{ z\in \cx^n: \varrho_1<\abs{z_j}< \varrho_2, 1\leq j \leq n \}$. Formulas \eqref{eq-est0} and \eqref{eq-pi} show that the compact support of $\ppi^\tau_\alpha\psi$ is also contained in the set $A$. Therefore,
 passing to the equivalent $\mathcal{C}^k$-norms using \eqref{eq-equivalence}, we have
 \[  \norm{\ppi^\tau_\alpha\psi}_{\mathcal{C}^k}\leq  B_k(\varrho_1, \varrho_2)\cdot B_{2nk+nm}(\varrho_1, \varrho_2)\cdot \frac{2^{m+k}}{\prod\limits_{j=1}^n\abs{\alpha_j}^{m}}\cdot \norm{\psi}_{\mathcal{C}^{2nk+nm}},\]
which completes the proof of the result.
 \end{proof}
\subsection{The dual representation} Let $\Omega$ be a Reinhardt domain.
Since for each $\lambda\in \T^n$, the map $\rot_\lambda$ maps  $\Omega$ biholomorphically (and therefore diffeomorphically) to itself. Consequently, we can define a representation of $\T^n$ on the space of distributions $\DD'(\Omega)$ using the pullback operation
\begin{equation}\label{eq-sigmadef}\sigma_\lambda (T)= (\rot_\lambda)^*(T), \quad T\in \DD'(\Omega).
\end{equation}
The representation $\sigma_\lambda$ is closely related to the representation $\tau_\lambda$ introduced in \eqref{eq-taudef}.
Clearly, $\DD(\Omega)$ is an invariant (dense)  subspace of $\sigma$,  on which  $\sigma$ restricts to $\tau$.
So $\sigma$ is simply the extension of the representation $\tau$ of \eqref{eq-taudef} by continuity to the space of distributions. 

The representation $\sigma$ on $\DD'(\Omega)$ is  also  ``dual" to the representation $\tau$ on $\DD(\Omega)$:
\begin{equation}\label{eq-dual}
	 \ipr{\sigma_\lambda(T), \phi}=\ipr{ (\rot_\lambda)^*(T),\phi}= \ipr{T, \frac{1}{\det_\rl (\rot_\lambda)}\phi\circ \rot_\lambda^{-1}}= \ipr{T, \tau_{\lambda^{-1}}\phi},
\end{equation}
so that $\sigma_\lambda$ is the transpose of the map $\tau_{\lambda^{-1}}$. The second equality in the chain may be proved by change of variables when $T$ is a test function, and then using density. 

\begin{prop}\label{prop-sigmacont}
	The representation $\sigma$ of $\T^n$ on $\DD'(\Omega)$ defined in \eqref{eq-sigmadef} is continuous.
\end{prop}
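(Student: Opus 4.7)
The plan is to verify the two conditions of Proposition~\ref{prop-continuity}, namely (a) that the topology of $\DD'(\Omega)$ is generated by $\sigma$-invariant continuous seminorms, and (b) that for each fixed $T\in \DD'(\Omega)$ the orbit map $\lambda\mapsto \sigma_\lambda T$ is continuous at $\lambda=\mathbf{1}$. The key tool throughout is the duality identity \eqref{eq-dual}, which converts statements about $\sigma$ on $\DD'(\Omega)$ into statements about $\tau$ on $\DD(\Omega)$.

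For condition (a), I would start with the defining family of seminorms $\{p_B\}$ from \eqref{eq-seminorm} and ``symmetrize" each bounded set: set
\[ \wt{B}=\bigcup_{\lambda\in \T^n}\tau_\lambda(B). \]
Using \eqref{eq-dual}, one checks that $p_{\wt{B}}(\sigma_\mu T)=p_{\wt{B}}(T)$ for all $\mu\in \T^n$, so $p_{\wt{B}}$ is $\sigma$-invariant. Moreover $p_B\leq p_{\wt{B}}$, so the family $\{p_{\wt{B}}\}$ generates the topology provided each $\wt{B}$ is itself bounded in $\DD(\Omega)$. The latter is the only nontrivial point: if $B\subset \DD_K$, then $\wt{B}$ is supported in the compact set $\wt{K}=\{\rot_\lambda(x):\lambda\in \T^n, x\in K\}$ (the continuous image of $\T^n\times K$), and the derivatives of $\phi\circ \rot_\lambda$ are, by the chain rule and the fact that $\rot_\lambda$ is linear with entries $\lambda_j$ satisfying $\abs{\lambda_j}=1$, uniformly bounded in $\lambda$ in terms of the same-order derivatives of $\phi$; so the uniform bound \eqref{eq-bounded} for $B$ transfers to $\wt{B}$.

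For condition (b), fix $T\in \DD'(\Omega)$ and a bounded set $B\subset \DD(\Omega)$. Using \eqref{eq-dual},
\[ p_B(\sigma_\mu T-T)=\sup_{\phi\in B}\abs{\ipr{T,\tau_{\mu^{-1}}\phi-\phi}}, \]
so it suffices to show that the net $\{\tau_{\mu^{-1}}\phi-\phi\}$ converges to $0$ in $\DD(\Omega)$ \emph{uniformly for $\phi\in B$} as $\mu\to \mathbf{1}$, since $T$ is continuous on $\DD(\Omega)$ and thus bounded on any bounded set (this is where we use that the seminorms $p_B$ in \eqref{eq-seminorm} are defined via bounded sets). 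Choose a slightly enlarged compact set $K'\supset \wt{K}$ so that $\tau_{\mu^{-1}}\phi$ has support in $K'$ for all $\phi\in B$ and all $\mu$ close to $\mathbf{1}$. Since $B$ is bounded in $\DD_K$, for every multi-index $\alpha$ the set $\{\partial^\alpha\phi:\phi\in B\}$ is uniformly bounded in $\mathcal{C}^1$ on $K'$, hence \emph{equicontinuous}. Combining this equicontinuity with the chain rule for $\tau_\mu$ (whose coefficients are monomials in $\mu_j$ tending uniformly to $1$) and with the uniform convergence $\rot_\mu\to \id$ on $K'$, one deduces that $\partial^\alpha(\tau_\mu\phi-\phi)\to 0$ uniformly on $K'$, uniformly for $\phi\in B$.

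The main (mild) obstacle is the uniformity in $\phi\in B$ required in step (b); the naive pointwise convergence $\tau_\mu\phi\to \phi$ in $\DD(\Omega)$ for fixed $\phi$ is not enough because the strong dual topology tests against whole bounded families at once. This is handled via the equicontinuity of the families of derivatives of elements of $B$, which is free from the definition of boundedness in $\DD_K$.
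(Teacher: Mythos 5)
Your proof is correct, and while your verification of condition (a) of Proposition~\ref{prop-continuity} (symmetrizing a bounded set $B$ to $\wt{B}$ and checking that $\wt{B}$ is still bounded) is essentially identical to the paper's, your verification of condition (b) takes a genuinely different and more self-contained route. The paper disposes of (b) in one line by citing the theorem that pointwise (weak-$*$) convergence of a \emph{sequence} in $\DD'(\Omega)$ already implies convergence in the strong dual topology --- a nontrivial consequence of the Banach--Steinhaus theorem together with the Montel property of $\DD(\Omega)$, for which it refers to \cite{treves}; it then only needs the trivial pointwise limit $\ipr{T,\tau_{\lambda_j^{-1}}\phi}\to\ipr{T,\phi}$ for each fixed $\phi$. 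You instead prove strong convergence directly: writing $p_B(\sigma_\mu T-T)=\sup_{\phi\in B}\abs{\ipr{T,\tau_{\mu^{-1}}\phi-\phi}}$, reducing via the local order estimate $\abs{\ipr{T,\psi}}\leq C\norm{\psi}_{\mathcal{C}^k}$ on $\DD_{K'}$ to the uniform (over $\phi\in B$) convergence $\tau_{\mu^{-1}}\phi\to\phi$ in each $\mathcal{C}^k$-norm, and obtaining that uniformity from the equicontinuity that the boundedness condition \eqref{eq-bounded} provides for free (uniform $\mathcal{C}^{m+1}$ bounds give uniform Lipschitz constants for the order-$m$ derivatives, and $\rot_\mu\to\id$ uniformly on compacts). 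What the paper's route buys is brevity at the cost of importing a deep fact about $\DD'$; what your route buys is an elementary, estimate-level argument using nothing beyond the definition of boundedness in $\DD(\Omega)$, at the cost of a chain-rule computation. One cosmetic remark: the phrase ``$T$ is bounded on any bounded set'' is not quite the fact you actually use --- the operative point is the finite-order seminorm estimate for $T$ on $\DD_{K'}$ --- and the supports of the $\tau_{\mu^{-1}}\phi$ lie in $\wt{K}$ for \emph{all} $\mu\in\T^n$ (since $\wt{K}$ is Reinhardt-invariant), so no enlargement to $K'$ is needed; but neither point affects the validity of the argument.
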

\begin{proof} Let $\{\lambda_j\}$ be a sequence in $\T^n$ converging to the identity element.  Since pointwise convergence of a sequence in $\DD'(\Omega)$ on each test function implies convergence in the strong dual topology (see \cite{treves}), to show that $\sigma_{\lambda_j}(T)\to T$ in 
	$\DD'(\Omega)$ we need to show that for $\phi\in \DD(\Omega)$ we have $\ipr{\sigma_{\lambda_j}(T),\phi}\to \ipr{T,\phi}$. But by \eqref{eq-dual}, 
	$\ipr{\sigma_{\lambda_j}(T),\phi}=\ipr{T, \tau_{\lambda_{j}^{-1}}\phi}$ and by the continuity of the representation $\tau$, to follows that 
	 $\sigma_{\lambda_j}(T)\to T$ in 
	$\DD'(\Omega)$. 
	
	To complete the proof, by Proposition~\ref{prop-continuity}, we need to show that the topology of $\Omega$ is generated by a collection of 
$\sigma$-invariant seminorms. Let $B$ be a bounded subset of $\DD(\Omega)$. Let 
\[ \wt{B}= \{ \sigma_\lambda(\phi): \lambda\in \T^n, \phi\in B\}.\]
Then it is clear from \eqref{eq-bounded} that $\wt{B}$ is also a bounded subset of $\DD(\Omega)$. With notation as it \eqref{eq-seminorm},
it is clear that for each $T\in \DD'(\Omega)$, we have  $p_B(T)\leq p_{\wt{B}}(T)$.  Therefore the topology of $\DD'(\Omega)$ is generated by 
the family of continuous invariant seminorms $\{ p_{\wt{B}}\}$.
	\end{proof}

\subsection{Fourier series of distributions on Reinhardt domains} Let $\Omega$ be a Reinhardt domain and let $T\in \DD'(\Omega)$ be a 
distribution. Then by the results of  Section~\ref{sec-abstract} we can expand $T$ in a formal Fourier series with respect to the representation 
$\sigma$ of \eqref{eq-sigmadef}. For simplicity of notation, whenever there is no possibility of confusion, we denote the Fourier components of
$T$ by
\begin{equation}\label{eq-formalfourier-t}
 T_\alpha=\ppi^\sigma_\alpha(T),\quad  \alpha\in \Z^n,\end{equation}
so that the Fourier series of $T$ is written as $T\sim \sum\limits_{\alpha\in \Z^n} T_\alpha$. We notice the following properties of the Fourier components:
\begin{prop}
	\label{prop-coefficients} Let $\Omega$ and $T$ be as above and let $\alpha\in \Z^n$. 
	\begin{enumerate}[label=(\alph*)]
		\item If the distribution $T$ lies in one of the linear subspaces $\OO(\Omega), \mathcal{C}^\infty(\Omega), (\OO\cap \mathcal{C}^\infty)(\Omega)$ or
		$\DD(\Omega)$ of $\DD'(\Omega)$, the Fourier component $T_\alpha$ also lies in the same subspace.
		\item If $U$ is another Reinhardt domain such that $U\subset \Omega$, we have
		\begin{equation}\label{eq-restriction}
			 T_\alpha|_U=\all{T|_U}_\alpha.
		\end{equation}
	\end{enumerate}
\end{prop}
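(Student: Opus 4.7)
The plan is to reduce both parts to the naturality of Fourier components under equivariant maps, that is, to Proposition~\ref{prop-fourierproperties}(2). The strategy is to identify, for each subspace $Y$ in question, a continuous representation of $\T^n$ on $Y$ such that the inclusion $j\colon Y\hookrightarrow \DD'(\Omega)$ intertwines it with $\sigma$, and then read off the conclusion from the identity $j\circ \ppi^\tau_\alpha=\ppi^\sigma_\alpha\circ j$.

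For part (a), I would handle the four subspaces as follows. The space $\DD(\Omega)$ carries the representation $\tau$ introduced in \eqref{eq-taudef}, which is already known to be continuous; the inclusion $\DD(\Omega)\hookrightarrow \DD'(\Omega)$ (via $f\mapsto T_f$) is continuous and intertwines $\tau$ with $\sigma$, the latter being the continuous extension of the former by \eqref{eq-dual}. On $\mathcal{C}^\infty(\Omega)$, pullback $\phi\mapsto \phi\circ\rot_\lambda$ defines a continuous representation (directly from the definition of the Fréchet topology and the smoothness of $\rot_\lambda$), and again the inclusion into $\DD'(\Omega)$ is continuous and equivariant. For $\OO(\Omega)$, we have already observed that it is closed in $\DD'(\Omega)$, hence complete, and it is $\sigma$-invariant by Proposition~\ref{prop-invariance} since each $\rot_\lambda$ is a biholomorphism of $\Omega$; the restricted representation is therefore a continuous representation on $\OO(\Omega)$. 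Finally $(\OO\cap\mathcal{C}^\infty)(\Omega)=\OO(\Omega)\cap\mathcal{C}^\infty(\Omega)$ is a closed subspace of $\mathcal{C}^\infty(\Omega)$ that is invariant under pullback by $\rot_\lambda$, so it inherits a continuous representation. In every case, Proposition~\ref{prop-fourierproperties}(2) applied to the inclusion $j$ gives $T_\alpha=\ppi^\sigma_\alpha(j T)=j(\ppi^{\text{(restr.)}}_\alpha T)\in Y$.

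For part (b), the restriction map $r\colon \DD'(\Omega)\to \DD'(U)$, defined by duality from the extension-by-zero inclusion $\DD(U)\hookrightarrow \DD(\Omega)$, is continuous in the strong dual topologies. I would verify that $r$ intertwines the two copies of $\sigma$: for $\phi\in \DD(U)$,
\begin{equation*}
\ipr{r(\sigma_\lambda T),\phi}=\ipr{\sigma_\lambda T,\phi}_\Omega=\ipr{T,\tau_{\lambda^{-1}}\phi}_\Omega=\ipr{r T,\tau_{\lambda^{-1}}\phi}_U=\ipr{\sigma_\lambda(r T),\phi}_U,
\end{equation*}
where the key point is that $U$ being Reinhardt ensures $\tau_{\lambda^{-1}}\phi\in\DD(U)$ whenever $\phi\in\DD(U)$. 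Proposition~\ref{prop-fourierproperties}(2) applied to $r$ then yields $r\circ \ppi^\sigma_\alpha=\ppi^\sigma_\alpha\circ r$, which is precisely \eqref{eq-restriction}.

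The only mildly delicate point is verifying continuity of the representations on the ``smaller'' spaces (in particular checking the joint-continuity criterion of Proposition~\ref{prop-continuity} for $\mathcal{C}^\infty(\Omega)$ and $\DD(\Omega)$), but this is a routine unwinding of the seminorms generating each topology together with the smoothness of the Reinhardt action. Once all the equivariance is in place, both parts are immediate corollaries of the naturality of $\ppi^\sigma_\alpha$.
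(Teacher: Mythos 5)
Your proposal is correct and follows essentially the same route as the paper: both parts are reduced to the intertwining identity of Proposition~\ref{prop-fourierproperties}(2), applied in part (a) to the inclusions of the subspaces (each carrying the restricted continuous representation) into $\DD'(\Omega)$, and in part (b) to the restriction map $\DD'(\Omega)\to\DD'(U)$. Your explicit verification that the restriction map intertwines the two representations is a welcome detail that the paper leaves as ``clearly''.
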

\begin{proof}All these are consequences of Part 2 of Proposition~\ref{prop-fourierproperties}. To see  Part (a), let $X$ be the space $\DD'(\Omega)$ and 
	let $Y$ be one of the spaces $\OO(\Omega), \mathcal{C}^\infty(\Omega)$ or $\DD(\Omega)$. Each of these has a natural structure
	of a complete LCTVS (the space $\OO(\Omega)$ as  a closed subspace of $\DD'(\Omega)$, $\mathcal{C}^\infty(\Omega)$ in its Fréchet topology and 
$\DD(\Omega)$ in its $LF$-topology), and for each one of these topologies, the representation $\sigma$ restricts to a continuous representation
(in the stronger topology of the subspace), which we will call $\tau$ (this coincides with the $\tau$ introduced in \eqref{eq-taudef}). If $j$ is the inclusion map of any one of 
these subspaces into $\DD'(\Omega)$, then clearly it is continuous (where the subspace has the natural topology). Therefore, by 	
\eqref{eq-equivariance}, we have $j\left(\ppi^\tau_\alpha T\right)= \ppi^\sigma_\alpha (j(T))$. By definition, the right hand side is precisely $T_\alpha$, i.e. the Fourier component of $T$ as an element of $\DD'(\Omega)$. Notice that $\ppi^\tau_\alpha T\in Y$, since it is the 
Fourier component of $T$ as an element of $Y$. Since $j$ is the inclusion of $Y$ in $X$, it now follows that $T_\alpha\in Y$ as well.

Since the result is true for $\OO(\Omega)$ and $\mathcal{C}^\infty(\Omega)$ it follows for $(\OO\cap\mathcal{C}^\infty )(\Omega)$.

For part (b), in part 2 of Proposition~\ref{prop-fourierproperties}, let $X=\DD'(U)$ with representation defined in \eqref{eq-sigmadef}, which we call
$\sigma'$ for clarity, and let $Y=\DD'(\Omega)$ with the representation $\sigma$ and let $j:Y\to X$ be the restriction map of distributions, which 
is clearly continuous and intertwines the two representations. Then \eqref{eq-equivariance} takes the form $j\circ \ppi^\sigma_\alpha=\ppi^{\sigma'}_\alpha \circ j$. Applying this to the distribution $T$, we see that
\[ T_\alpha|_U = j\all{\ppi^\sigma_\alpha T}=\ppi^{\sigma'}_\alpha\all{j(T)}= \all{T|_U}_\alpha. \]

\end{proof}

We  establish the following lemma:
\begin{lem}
	\label{lem-weierstrass}
	Let $\Omega\subset \cx^n $ be a Reinhardt domain, and let 
	\[ \mathscr{A}= \{T\in \OO(\Omega): \text{each Fourier component  } T_\alpha \text{ belongs to } \mathcal{C}^\infty (\Omega)\}.\]
	Suppose that for each $T\in \mathscr{A}$  the Laurent series $\sum\limits_{\alpha\in \Z^n}T_\alpha$  converges absolutely in the Fréchet space
	 $\mathcal{C}(\Omega)$ .
	Then  the Laurent series  of each element of $\mathscr{A}$  converges absolutely in the space $\mathcal{C}^\infty(\Omega)$.
\end{lem}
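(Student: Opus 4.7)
The plan is to bootstrap the given $\mathcal{C}(\Omega)$ convergence to $\mathcal{C}^\infty(\Omega)$ convergence by exploiting a commutation relation between holomorphic differentiation and the Fourier projection, together with the observation that for a smooth holomorphic function every real partial derivative reduces to a holomorphic derivative. The first step is to derive this commutation relation. For a smooth function $g$ the chain rule gives $\dd_j(\rot_\lambda^* g) = \lambda_j\,\rot_\lambda^*(\dd_j g)$, and this identity extends to all of $\DD'(\Omega)$ by continuity, since both sides are continuous linear operators and $\DD(\Omega)$ is dense in $\DD'(\Omega)$. Equivalently, $\sigma_\lambda(\dd_j T) = \lambda_j^{-1}\dd_j\sigma_\lambda T$. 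Because $\dd_j : \DD'(\Omega)\to\DD'(\Omega)$ is continuous linear, formula \eqref{eq-t} allows us to pull it outside the Pettis integral defining $\ppi^\sigma_\alpha$, giving
\begin{equation*}
(\dd_j T)_\alpha \;=\; \int_{\T^n}\lambda^{-\alpha-e_j}\,\dd_j\sigma_\lambda T\,d\lambda \;=\; \dd_j\,T_{\alpha+e_j}.
\end{equation*}
Iterating, $(\dd^\beta T)_\alpha = \dd^\beta T_{\alpha+\beta}$ for every $\beta\in\N^n$, where $\dd^\beta = \dd_1^{\beta_1}\cdots\dd_n^{\beta_n}$.

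The second step is to check that $\mathscr{A}$ is stable under holomorphic differentiation. Since $\dd_j$ commutes with each $\db_k$, we have $\dd_j T\in\OO(\Omega)$; its Fourier components are $\dd_j T_{\alpha+e_j}$, which are smooth because each $T_{\alpha+e_j}$ is smooth by the assumption $T\in\mathscr{A}$. Therefore $\dd^\beta T\in\mathscr{A}$ for every $\beta\in\N^n$. Applying the standing hypothesis of the lemma to $\dd^\beta T$, the series $\sum_\alpha(\dd^\beta T)_\alpha$ converges absolutely in $\mathcal{C}(\Omega)$. Via the commutation identity and the change of index $\gamma=\alpha+\beta$ (a bijection of $\Z^n$), this series is precisely $\sum_\gamma \dd^\beta T_\gamma$, so for each compact $K\subset\Omega$,
\begin{equation*}
\sum_{\alpha\in\Z^n}\sup_K\abs{\dd^\beta T_\alpha}\;<\;\infty.
\end{equation*}

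Finally, each $T_\alpha$ is a smooth holomorphic function on $\Omega$ (Proposition~\ref{prop-coefficients}(a) together with the hypothesis of the lemma), so $\db_k T_\alpha=0$ and the Wirtinger relations $\partial/\partial x_j=\dd_j+\db_j$, $\partial/\partial y_j=i(\dd_j-\db_j)$ collapse to constant multiples of the holomorphic derivatives. Consequently every real partial derivative of $T_\alpha$ of order $\le k$ is a constant multiple of some $\dd^\beta T_\alpha$ with $\abs{\beta}\le k$, so the $\mathcal{C}^k(K)$-seminorm of $T_\alpha$ is bounded by a constant depending only on $k$ times $\max_{\abs{\beta}\le k}\sup_K\abs{\dd^\beta T_\alpha}$. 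Summing over $\alpha$ and using the estimate above for each $\beta$ with $\abs{\beta}\le k$ yields absolute convergence of the Laurent series in each $\mathcal{C}^k(K)$-seminorm, and hence in the Fréchet topology of $\mathcal{C}^\infty(\Omega)$. The only delicate point is the commutation identity $(\dd^\beta T)_\alpha = \dd^\beta T_{\alpha+\beta}$, for which one must be careful pulling differentiations through the Pettis integral, but this is handled cleanly by \eqref{eq-t}.
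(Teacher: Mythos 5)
Your proposal is correct and follows essentially the same route as the paper's proof: the commutation identity $(\dd^\beta T)_\alpha=\dd^\beta T_{\alpha+\beta}$ (the paper's $\dd^\beta T_\alpha=(\dd^\beta T)_{\alpha-\beta}$ after reindexing), the stability of $\mathscr{A}$ under $\dd^\beta$ so the standing hypothesis applies to $\dd^\beta T$, and the reduction of real partial derivatives to holomorphic ones using $\db_j T_\alpha=0$. No gaps.
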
	
\begin{proof} 
	  For a  function $f\in \mathcal{C}(\Omega)$ and a compact set $K$, let $p_K$ be the seminorm:
	\begin{equation}\label{eq-pk}
		p_K(f)=\sup_{K}\abs{f}. 
	\end{equation}
	Notice that the family $\{p_K: K \subset \Omega \quad \text{compact}\}$ is a generating family of seminorms for $\mathcal{C}(\Omega)$, and the hypothesis on $\mathscr{A}$  can be expressed as
			\begin{equation}\label{eq-hypo1}
			\sum\limits_{\alpha\in \Z^n} p_K(T_\alpha)<\infty,
		\end{equation}
		for each compact $K\subset \Omega$.

 The Fréchet topology of $\mathcal{C}^\infty(\Omega)$ is generated by the seminorms $p_{K,L}$, where $K$ ranges over compact subsets of $\Omega$,
$L$ is a constant coefficient differential operator on $\cx^n$,  and
\[ p_{K,L}(f)= \sup_{K}\abs{Lf}.\]
We will continue to  write $p_K$ for the seminorm \eqref{eq-pk}  corresponding to $L=1$.  From the distributional chain rule \eqref{eq-chain1}, we have for each $j$ that
\[ \dd_j\sigma_\lambda(T)= \dd _j(\rot_\lambda^* T) =(\dd_j \rot_\lambda)\cdot \rot_\lambda^* (\dd_j T)= \lambda_j\cdot \sigma_\lambda (\dd_j T).\]
Let $\beta\in \N^n$, and denote  $\dd^\beta=\dd_1^{\beta_1}\dots \dd_n^{\beta_n}$. Then, from the above we have
\begin{align*}
	\dd^\beta T_\alpha= \dd^\beta \int_{\T^n} \lambda^{-\alpha}\sigma_\lambda(T) d\lambda = \int_{\T^n} \lambda^{-\alpha} \dd^\beta \sigma_\lambda(T) d\lambda =\int_{\T^n} \lambda^{-(\alpha-\beta)}\sigma_\lambda(\dd^\beta T)d\lambda=(\dd^\beta T)_{\alpha-\beta},
\end{align*}
where $(\dd^\beta T)_{\alpha-\beta}=\ppi^\sigma_{{\alpha-\beta}}(\dd^\beta T)$. It follows that all Fourier coefficients of the distribution $\dd^\beta T$ are in $\mathcal{C}^\infty(\Omega)$, so $\dd^\beta T\in \mathscr{A}$.
Therefore,  
\[ p_{K,\dd^\beta}(T_\alpha)= p_K\left(\dd^\beta T_\alpha\right)=p_{K}\left((\dd^\beta T)_{\alpha-\beta}\right).\]
It follows therefore that 
\[ \sum_{\alpha\in \Z^n} p_{K,\dd^\beta}(T_\alpha)= \sum_{\gamma\in \Z^n}p_{K}\left((\dd^\beta T)_{\gamma}\right)<\infty,\]
using assumption \eqref{eq-hypo1} on elements of $\mathscr{A}$. 
 A partial derivative $L=\frac{\partial^\gamma}{\partial x^\gamma}\frac{\partial^\delta}{\partial y^\delta}$ on  $\cx^n$ can be rewritten as a polynomial in the commuting differential operators $\dd_1,\dots, \dd_n,
\db_1, \dots, \db_n$, and by hypothesis the derivatives $\db_j$ are zero. Therefore, it follows that $\sum_\alpha p_{K,L}(T_\alpha)<\infty$. Consequently the series $\sum_\alpha T_\alpha$ is absolutely convergent in
$\mathcal{C}^\infty(\Omega)$.

\end{proof}

\subsection{ Proof  of Theorem~\ref{thm-distn}: case of annular domains} We now prove a weaker version of 
 Theorem~\ref{thm-distn}.
A Reinhardt domain $\Omega\subset \cx^n$ will be called \emph{annular} if it is disjoint from  the coordinate hyperplanes:
\begin{equation}\label{eq-annular}
	\Omega\cap Z=\emptyset, 
\end{equation}
where $Z$ is as in \eqref{eq-Z}.  We have the following:
\begin{prop}\label{prop-annular}
	Let $\Omega \subset \cx^n\setminus Z$ be an annular Reinhardt domain in $\cx^n$.
	Then for each $\alpha\in \Z^n$ there is a continuous linear functional 
	$a_\alpha:\OO(\Omega)\to \cx$ such that for each $T\in \OO(\Omega)$, the series of terms in $\mathcal{ C}^\infty (\Omega)$ given by
$
		\sum_{\alpha\in \Z^n} a_\alpha(T)e_\alpha
$
	converges absolutely in $\mathcal{C}^\infty({\Omega})$ to a function $f$, and  this $\mathcal{C}^\infty$-smooth function  generates 
	the distribution $T$.
\end{prop}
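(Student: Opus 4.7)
The plan is to combine the Fejér machinery of Section~\ref{sec-abstract} with the Mean Value Property for monomials (Lemma~\ref{lem-radial}) and the Fourier decay estimate of Proposition~\ref{prop-est}. For $T\in \OO(\Omega)$ and $\alpha\in \Z^n$, Proposition~\ref{prop-coefficients} gives $T_\alpha := \pi_\alpha^\sigma(T)\in \OO(\Omega)$ with $\sigma_\lambda T_\alpha = \lambda^\alpha T_\alpha$. Because $\Omega\cap Z=\emptyset$, the monomial $e_{-\alpha}$ is a nowhere-vanishing smooth holomorphic function on $\Omega$, so $S_\alpha := e_{-\alpha}T_\alpha$ is a well-defined holomorphic distribution (apply $\db_j$ via Leibniz, using that both factors are annihilated by $\db_j$). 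Moreover $\sigma_\lambda S_\alpha = \lambda^{-\alpha}e_{-\alpha}\cdot \lambda^\alpha T_\alpha = S_\alpha$, so $S_\alpha$ is $\sigma$-invariant.

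The conceptual heart of the argument is the claim that any $\sigma$-invariant holomorphic distribution on $\Omega$ must be a constant. I would prove this by mollification: convolve $S_\alpha$ with a radially symmetric, hence $\sigma$-invariant, compactly supported bump $\chi_\epsilon\in \DD(\cx^n)$, producing a smooth function $S_\alpha*\chi_\epsilon$ on the (Reinhardt) $\epsilon$-interior $\Omega_\epsilon$ of $\Omega$. Since $\rot_\lambda$ is unitary and both factors are $\sigma$-invariant, $S_\alpha*\chi_\epsilon$ is $\sigma$-invariant; since convolution commutes with $\db_j$, it is holomorphic. A smooth $\sigma$-invariant function has the form $g_\epsilon(|z_1|,\ldots,|z_n|)$, and the Cauchy-Riemann equations force $\partial_{|z_j|}g_\epsilon = 0$ for each $j$, so $S_\alpha*\chi_\epsilon$ is locally constant; a short compatibility argument across overlapping Reinhardt subdomains gives a single constant, and letting $\epsilon\downarrow 0$ and pairing against any fixed $\phi\in\DD(\Omega)$ with $\int\phi=1$ yields $S_\alpha = a_\alpha(T)\in\cx$ as a distribution. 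Hence $T_\alpha = a_\alpha(T)\, e_\alpha$, and $a_\alpha$ is continuous in $T$ since $T\mapsto T_\alpha \mapsto e_{-\alpha}T_\alpha \mapsto \langle\cdot,\phi\rangle$ is a composition of continuous maps.

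Next I would obtain the quantitative decay on $a_\alpha(T)$. For $z_0\in \Omega$ and $\psi\in\DD(\Omega)$ multi-radial around $z_0$ with $\int\psi=1$, Lemma~\ref{lem-radial} gives $\langle T_\alpha,\psi\rangle = a_\alpha(T)\, e_\alpha(z_0)$, while a short duality computation using \eqref{eq-dual} and the substitution $\mu = \lambda^{-1}$ in the Haar integral yields $\langle T_\alpha,\psi\rangle = \langle T,\pi^\tau_{-\alpha}\psi\rangle$. The order bound for the distribution $T$ on a compact containing $\mathrm{supp}(\psi)$, combined with Proposition~\ref{prop-est}, gives for each $m\in\N$ an estimate
\[
|a_\alpha(T)|\cdot |e_\alpha(z_0)| \le \frac{C(T,z_0)}{\prod_{j=1}^n |\alpha_j|^m}
\]
once each $|\alpha_j|$ is large. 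Letting $z_0$ range over a given compact $K\subset\Omega$ with $\psi$ drawn from a bounded family supported in a fixed compact neighborhood of $K$ yields the uniform bound $\sup_K|a_\alpha(T)\, e_\alpha|\le C(T,K)/\prod |\alpha_j|^m$. Choosing $m$ large enough establishes absolute convergence of $\sum_\alpha a_\alpha(T)\, e_\alpha$ in $\mathcal{C}(\Omega)$, and Lemma~\ref{lem-weierstrass} upgrades this to absolute convergence in $\mathcal{C}^\infty(\Omega)$. Call the sum $f\in\mathcal{C}^\infty(\Omega)$. By Proposition~\ref{prop-absolute} the square partial sums $S_k^\sigma(T)$ converge to $f$ in $\mathcal{C}^\infty(\Omega)\hookrightarrow\DD'(\Omega)$, so their Cesàro means also converge to $f$ by Proposition~\ref{prop-cesaro}; Theorem~\ref{thm-fejer} identifies these Cesàro means as converging to $T$, hence $f$ generates $T$.

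I expect the constancy claim for $\sigma$-invariant holomorphic distributions to be the main conceptual obstacle, as it is the only place where the Cauchy-Riemann equations are used beyond their linearity. The mollification sketch keeps this step elementary and avoids any appeal to the fundamental solution of $\db$. The secondary technical difficulty is the uniformization of the decay estimate over a fixed compact $K\subset\Omega$, which requires choosing the radial bumps $\psi$ from a bounded family as the base point $z_0$ varies over $K$.
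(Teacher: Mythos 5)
Your proposal is correct and follows the same overall architecture as the paper's proof: show that the $\sigma$-invariant holomorphic distribution $S_\alpha=e_{-\alpha}T_\alpha$ is constant so that $T_\alpha=a_\alpha(T)e_\alpha$; obtain the decay of $p_K(T_\alpha)$ from Lemma~\ref{lem-radial}, the duality identity $\ipr{\ppi^\sigma_\alpha T,\psi}=\ipr{T,\ppi^\tau_{-\alpha}\psi}$, and Proposition~\ref{prop-est} applied to the translated family $\psi_z=\Psi(\cdot-z)$; upgrade to absolute $\mathcal{C}^\infty$-convergence via Lemma~\ref{lem-weierstrass}; and identify the sum with $T$ by the Ces\`aro/Fej\'er argument of Corollary~\ref{cor-cesaro}. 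The one place you genuinely diverge is the constancy claim. The paper works directly on the distribution: the invariance $\sigma_\lambda(S)=S$ yields $\partial S/\partial\theta_j=0$ as a limit of difference quotients of $\sigma_{\epsilon_j(h)}(S)-S$ in $\DD'(\Omega)$, the polar form \eqref{eq-dbarpolar} of $\db_j$ then forces $\partial S/\partial r_j=0$, and Schwartz's theorem on distributions with vanishing first derivatives finishes. Your mollification route --- convolve with a multi-radial bump to get a smooth, invariant, holomorphic function on the $\epsilon$-interior, run the same polar-coordinate computation classically, and let $\epsilon\downarrow 0$ --- is equally valid; it trades the (slightly delicate) identification of the distributional $\theta_j$-derivative with a limit of difference quotients for the bookkeeping of possibly disconnected $\epsilon$-interiors, which you correctly flag. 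Neither version invokes the fundamental solution of $\db$, which is the point of the section. One shared loose end, present in the paper's own write-up as well: the bound from Proposition~\ref{prop-est} applies only to multi-indices with every $\abs{\alpha_j}\geq 2k$, and for $n\geq 2$ infinitely many $\alpha\in\Z^n$ have some small component, so summability over that residual set needs an extra word (e.g., a variant of Proposition~\ref{prop-est} giving decay in the large components only); you are no worse off than the paper here, but be aware of it.
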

\begin{proof}
Let $T\in \OO(\Omega)$ and let $T_\alpha$ be its $\alpha$-th Fourier component as in \eqref{eq-formalfourier-t},
 and let  $S=e_{-\alpha}\cdot T_\alpha$. The distributional Leibniz rule
 \begin{equation}\label{eq-leibniz}
 	 \db_j(fU)= \left(\db_j f \right)\cdot U+f \cdot \left(\db_j U\right), \quad f\in \mathcal{C}^\infty(\Omega)  \text{ and  } U\in \DD'(\Omega),
 \end{equation}
shows that $S\in \OO(\Omega)$,  since $T_\alpha\in \OO(\Omega)$ by Part (1) of Proposition~\ref{prop-coefficients}.
By Part (1) of Proposition~\ref{prop-fourierproperties},  the Fourier component $T_\alpha$ lies in the $\alpha$-th Fourier mode of 
$\OO(\Omega)$, i.e.,
$\sigma_\lambda(T_\alpha)=\lambda^\alpha\cdot T_\alpha$, so
 \[ \sigma_\lambda(S)= \lambda^{-\alpha}e_{-\alpha}\cdot \lambda^\alpha T_\alpha=S.\]
Therefore, using polar coordinates $z_j=r_je^{i\theta_j}$, with $\epsilon_j(h)=(1,\dots, 1, e^{ih}, \dots, 1)\in \T^n$ (with $e^{ih}$ in the $j$-th position) we have
\[ \frac{\partial}{\partial \theta_j}S=\lim_{h\to 0}\frac{1}{h}\left(\sigma_{\epsilon_j(h)}(S)-S\right)=0.\]
Since $\db_j S=0$,  and in polar coordinates we have
\begin{equation}
	\label{eq-dbarpolar}
	\db_j= \frac{e^{i\theta_j}}{2}\left(\frac{\partial}{\partial r_j} + \frac{i}{r_j}\frac{\partial}{\partial \theta_j} \right),
\end{equation}
 we see that $$\dfrac{\partial}{\partial r_j}S=0$$ also. Now  from the representations \eqref{eq-dxdy}
 we have that
$\displaystyle{  \frac{\partial}{\partial x_j} S= 0}$  and $ \displaystyle{\frac{\partial}{\partial y_j} S= 0}$  in $ \DD'(\Omega)$ for $1\leq j\leq n$.
Therefore, by a classical result in the  theory of distributions (see \cite[Theorème VI, pp. 69ff.]{schwartz}) we have that  $S$ is locally  constant on $\Omega$, or more precisely, $S$ is  generated by a locally constant function. Since $\Omega$ is connected, it follows that there
is a constant $a_\alpha(T)\in \cx$  which generates the distribution $S$, so that $ T_\alpha= a_\alpha(T)e_\alpha.$
This can also be written as $a_\alpha(T)=e_{-\alpha}\cdot T_\alpha= e_{-\alpha}\cdot \ppi^\sigma_\alpha (T)$.  Since multiplication by the fixed $\mathcal{C}^\infty$-function
 $e_{-\alpha}$  is a 
continuous map on  $\DD'(\Omega)$ (and therefore on the  subspace $\OO(\Omega)$), and $\ppi^\sigma_\alpha: \OO(\Omega)\to \OO(\Omega)$ is continuous by part 1 of Proposition~\ref{prop-fourierproperties}, it follows that $a_\alpha:\OO(\Omega)\to \OO(\Omega)$ is continuous. But we know that $a_\alpha$ takes values in the subspace of distributions 
generated by constants. By a well-known theorem, on a finite-dimensional topological vector space, there is only one  
topology.  Therefore the topology induced from $\OO(\Omega)$ on the subspace of constants coincides with the natural topology of $\cx$.  Therefore 
$a_\alpha:\OO(\Omega)\to \cx$ is continuous.

By the above,  the Fourier components $T_\alpha$ of the Fourier series of $T$  actually
  lie in   $(\OO\cap\mathcal{C}^\infty)(\Omega)$, and thus any partial sum (i.e. sum of finitely many terms) of this series also lies in  $(\OO\cap\mathcal{C}^\infty)(\Omega)$.
  We will now show  that the series \eqref{eq-formalfourier-t}  is absolutely convergent in the topology of  $\mathcal{C}^\infty(\Omega)$, 
  in the sense of Proposition~\ref{prop-absolute}.
 Let $K\subset \Omega$ be compact.  Let $\delta< \mathrm{dist}(K, \cx^n\setminus \Omega)$, and 
let $\Psi\in \DD(\cx^n)$ be a   test function such that
	\[ \mathrm{support}(\Psi)\subset B(0,\delta),  \quad  \int_\cx \Psi(\zeta)dV(\zeta)=1, \quad \text{ and } \Psi(\zeta)= \Psi (\abs{\zeta_1},\dots, \abs{\zeta_n}) \text{ for  $\zeta\in \cx^n$},\]
	i.e. $\Psi$ is radial around the origin in each complex coordinate.
For $z\in K$, define $\psi_z(\zeta)=\Psi(\zeta-z),$
so that $\psi_z$ is radially symmetric around $z$ in each complex direction. Therefore, for $z\in K$, we have, by Lemma~\ref{lem-radial} for each $\alpha$ that
\[ \ipr{T_\alpha,\psi_z}=  \ipr{a_\alpha(T)e_\alpha, \psi_z}=a_\alpha(T)z^\alpha.\]

 By the continuity of  the mapping $T:\DD({\Omega})\to \cx$, there is a constant $C>0$ and an integer $k\geq 0$  such that for all $\psi\in \DD({\Omega})$ with support in $K$, we have that 
 \begin{equation}\label{eq-distn1}
 	\abs{\ipr{T,\psi}}\leq C\cdot\norm{\psi}_{\mathcal{C}^k({\Omega})}.
 \end{equation}
Also notice that for any $\psi\in \DD({\Omega})$ we have 
 	\begin{align}
 	\ipr{\ppi^{\sigma}_\alpha(T),\psi} &= \ipr{\int_{\T^n} \lambda^{-\alpha}\sigma_\lambda(T)d\lambda, \psi}= \int_{\T^n} \lambda^{-\alpha}\ipr{\sigma_\lambda(T),\psi}d\lambda \qquad \text{using \eqref{eq-t}}\nonumber\\
 	&= \int_{\T^n} \lambda^{-\alpha}\ipr{T,\tau_{\lambda^{-1}}\psi}d\lambda = \ipr{T,  \int_{\T^n} \lambda^{-\alpha}\tau_{\lambda^{-1}}\psi d\lambda}\qquad \text{using \eqref{eq-dual} and \eqref{eq-t}}\nonumber\\
 	&= \ipr{T,  \int_{\T^n} \lambda^{\alpha}\tau_{\lambda}\psi d\lambda}\nonumber\\&\text{ using the invariance of Haar measure of a compact group under inversion}\nonumber\\
 	&=\ipr{T, \ppi^\tau_{-\alpha}\psi}.
 	\label{eq-dualexp}
 \end{align}

In the following estimates, let $C$ denote a constant that depends only on the compact $K$ and the distribution $T$, and may have different values at different occurrences.  By combining \eqref{eq-distn1} with \eqref{eq-dualexp}, we see that for  $z\in K, \phi\in \DD({\Omega})$  and each $\alpha$   with $\abs{\alpha}\geq 2k$ ,  using Lemma~\ref{lem-radial}
 \begin{align*}
 	\abs{a_\alpha(T)z^\alpha}&=\abs{\ipr{T_\alpha, \psi_z}}=\abs{\ipr{\ppi^\sigma_\alpha T,\psi_z}}=\abs{\ipr{T, \ppi^\tau_{-\alpha}\psi_z }}\nonumber\\
 	&\leq C\cdot\norm{\ppi^\tau_{-\alpha}\psi_z}_{\mathcal{C}^k({\Omega})}\quad \text{using \eqref{eq-distn1}}\\
 	&\leq  \frac{C}{\abs{\alpha}^2}\cdot \norm{\psi_z}_{\mathcal{C}^{2nk+2n}}\quad \text{by Proposition~\ref{prop-est} with $m=2$,}\\
 	&=\frac{ C}{\abs{\alpha}^2}\cdot \norm{\psi_z}_{\mathcal{C}^{2nk+2n}},
 \end{align*}
recalling that the local order $k$ depends only on the distribution $T$ and the compact set $K$. Now,
	for each $z$  we have $\norm{\psi_z}_{\mathcal{C}^{2nk+2n}}= \norm{\Psi}_{\mathcal{C}^{2nk+2n}}$ by translation invariance of the
	 norm. Therefore, for each $\alpha\geq 2k$  we obtain the estimate for the seminorm (with the same convention as above on the constant $C$)
	 \begin{equation}
	     \label{eq-pkconv}
	     p_K(T_\alpha)=p_{K}(\ppi^\sigma_\alpha T)=\sup_{z\in K}	\abs{a_\alpha(T)z^\alpha} \leq  \frac{ C}{\abs{\alpha}^2}\cdot \norm{\Psi}_{\mathcal{C}^{2nk+2n}}= \frac{ C}{\abs{\alpha}^2} .
	 \end{equation}
Clearly therefore
 $\sum\limits_{\alpha\in 	\Z^n} p_K(T_\alpha)<\infty$. By Lemma~\ref{lem-weierstrass}, the series $\sum_\alpha T_\alpha$ converges absolutely in  $\mathcal{C}^\infty(\Omega)$.
 Let $f$ be its sum.
Since the inclusion $\mathcal{C}^\infty(\Omega)\subset \DD'(\Omega)$ is 
continuous, we see easily that the Fourier series $\sum_\alpha T_\alpha$ of $T\in \DD'(\Omega)$ 
converges absolutely in $\DD'(\Omega)$. Now it follows from Corollary \ref{cor-cesaro} that the sum of the series is $T$. Thus $T$ is the distribution generated by $f$, and this completes the proof of the proposition.
\end{proof}

\subsection{Smoothness of holomorphic distributions}
 \begin{cor}
 	\label{cor-smooth}
 	Let $\Omega\subset \cx^n$ be an open set. Then each holomorphic distribution on $\Omega$ is generated by 
 	a $\mathcal{C}^\infty$-smooth function, i.e. $\OO(\Omega)=(\OO\cap \mathcal{C}^\infty)(\Omega)$.
 	\end{cor}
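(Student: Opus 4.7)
The plan is to reduce the corollary to the case of annular Reinhardt domains handled by Proposition~\ref{prop-annular}, using a local argument combined with translation invariance. Since being generated by a $\mathcal{C}^\infty$-smooth function is a local property on $\Omega$ (two such local generators must agree as distributions on overlaps, hence pointwise), it suffices to show that each point $p\in \Omega$ has a neighborhood on which $T$ is generated by a smooth function. The main idea is that although a Reinhardt neighborhood centered at $p$ contains $p$ itself (and thus in general meets the singular set $Z$), we can translate so that $p$ lands in a region where an \emph{annular} Reinhardt neighborhood becomes available, and then apply Proposition~\ref{prop-annular}.

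Concretely, given $p\in \Omega$ and $T\in \OO(\Omega)$, choose any $a\in \cx^n$ such that $(p-a)_j\neq 0$ for $1\leq j\leq n$. Set $\Omega' = \tra_{-a}(\Omega)$, which is an open neighborhood of $w = p-a$, a point whose coordinates are all nonzero. For a sufficiently small $\eps>0$, the product of annuli
\[ A = \prod_{j=1}^n \{\zeta_j\in \cx : \abs{w_j}-\eps < \abs{\zeta_j} < \abs{w_j}+\eps \} \]
is an annular Reinhardt domain contained in $\Omega'$ and containing $w$ (one chooses $\eps < \min_j \abs{w_j}$ to ensure disjointness from $Z$, and small enough that $A\subset \Omega'$). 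By Proposition~\ref{prop-invariance}, the pullback $\tra_a^* T$ lies in $\OO(\Omega')$. Restricting to $A$ and applying Proposition~\ref{prop-annular}, there exists $g\in \mathcal{C}^\infty(A)$ that generates $(\tra_a^*T)|_A$. Pushing the identity forward by $\tra_a$, we conclude that on the open neighborhood $\tra_a(A)\ni p$ of $p$ in $\Omega$, the distribution $T$ is generated by the $\mathcal{C}^\infty$-function $z\mapsto g(z-a)$.

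Since $p\in \Omega$ was arbitrary, we obtain a covering of $\Omega$ by open sets on each of which $T$ is generated by a smooth function; the uniqueness remark above lets us glue these local generators into a single $f\in \mathcal{C}^\infty(\Omega)$ generating $T$, finishing the proof. I do not expect any substantial obstacle: the only conceptual maneuver is the translation trick that displaces $p$ away from the coordinate hyperplanes so that an annular Reinhardt neighborhood exists, after which Proposition~\ref{prop-annular} and the invariance of $\OO$ under biholomorphisms do all the work.
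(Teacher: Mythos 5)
Your overall strategy --- translate so that the point sits away from the coordinate hyperplanes, invoke Proposition~\ref{prop-annular} on an annular Reinhardt neighborhood, and glue the local smooth generators --- is exactly the paper's strategy, and the gluing step and the use of translation invariance are fine. But there is a genuine gap in the middle: the claim that for \emph{any} $a$ with $(p-a)_j\neq 0$ one can take $\eps$ ``small enough that $A\subset \Omega'$'' is false in general. The set $A=\prod_j\{\abs{w_j}-\eps<\abs{\zeta_j}<\abs{w_j}+\eps\}$ is Reinhardt, so no matter how small $\eps$ is it contains the entire torus orbit $\{(\lambda_1 w_1,\dots,\lambda_n w_n):\lambda\in\T^n\}$ of $w=p-a$; in the original coordinates this is the torus $\{z:\abs{z_j-a_j}=\abs{p_j-a_j}\}$, whose polyradii are $\abs{p_j-a_j}$. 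If $a$ is far from $p$ (say $\Omega$ is a small ball around $p$ and $a$ is a distant point), this torus is nowhere near contained in $\Omega$, so no choice of $\eps>0$ makes $A\subset\Omega'=\tra_{-a}(\Omega)$. The same objection applies to any annular Reinhardt neighborhood of $w$ in $\Omega'$, since Reinhardt domains are unions of torus orbits.

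The repair is to choose $a$ \emph{close} to $p$, which is what the paper does: after normalizing $p=0$ and fixing a polydisc $P(r)\subset\Omega$, it takes $a=(r/3,\dots,r/3)$ and the annular domain $Q=\{0<\abs{z_j-r/3}<r/2\}$, for which the triangle inequality gives both $Q\subset P(r)\subset\Omega$ and $P(r/6)\subset Q$. In your notation: pick $a=p-(r/3)\mathbf{1}$ with $P(p,r)\subset\Omega$, so that $w=(r/3,\dots,r/3)$ and the torus orbit of $w$ has all polyradii equal to $r/3$, whence the product of annuli of inner radius $r/6$ and outer radius $r/2$ does land inside $\tra_{-a}(P(p,r))$ and still contains a neighborhood of $w$. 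With that quantitative choice inserted, your argument closes and coincides with the paper's proof.
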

 \begin{proof} Let $T\in \OO(\Omega)$. It suffices to show that  $T$ is a smooth function in a neighborhood of each point $p\in \Omega$. Without loss of generality,
 	thanks to the invariance of the space of holomorphic functions under translations (Proposition~\ref{prop-invariance} and comments after it) , we can assume that 
 	$p=0$. Let $r>0$ be such that the polydisc $P(r)$ given by 
 	 	 	\[ P(r)=\{\abs{z_j}<r, 1\leq j \leq n\}\subset \cx^n\]
 	 	 	is contained in $\Omega$. We will show that $T$ is a smooth function on $P'= P(\frac{r}{6})$, i.e., $T\in (\OO\cap\mathcal{ C}^\infty)(P')$.

 	 By the previous section,  holomorphic distributions on an annular Reinhardt domain are smooth. By translation invariance, the 
 	 same is true if the annular domain is centered at a point $a\in \cx^n$ different from  the origin, i.e. for a domain of the form 
 	$ \tra_a(A)$
 	where $A$ is an annular Reinhardt domain centered at the origin, with $\tra_a$ as in \eqref{eq-tra} is the translation by $a$.

 	  Indeed, let $a=\left(\frac{r}{3},\dots, \frac{r}{3}\right)$, and set
 	  $Q=\tra_a\all{P(\frac{r}{2})\setminus Z}$, so that   $Q$ is the annular Reinhardt domain centered at $a$  given by
 	\[ Q=\left\{ z\in \cx^n: 0<\abs{z_j-\frac{r}{3}}< \frac{r}{2}, 1\leq j \leq n\right\},\]
 	for which it is easy to verify that $P'\subset Q\subset P(r)$. Now since $T|_{Q}\in (\OO\cap \mathcal{ C}^\infty)(Q)$ is a holomorphic function, the result follows.
 \end{proof}

\subsection{Extension to the log-convex hull} In this section, we prove the following special case of
Theorem~\ref{thm-distn} for annular Reinhardt domains:
\begin{prop}
        \label{prop-logconvex}
        Let $\Omega$ be an annular Reinhardt domain in $\cx^n$. Then the Laurent series of a distribution $T\in \OO(\Omega)$ 
        converges absolutely in $\mathcal{C}^\infty(\wh{\Omega})$ where $\wh{\Omega}$ is the smallest log-convex annular Reinhardt domain containing $\Omega$.
\end{prop}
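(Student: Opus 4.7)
The plan is to extend the convergence of the Laurent series $\sum_{\alpha} a_\alpha(T) e_\alpha$ (already provided by Proposition~\ref{prop-annular} on $\Omega$) across the log-convex hull $\wh\Omega$ in two stages: first I would upgrade the convergence to $\mathcal{C}(\wh\Omega)$ via a convexity argument in logarithmic coordinates, and then from $\mathcal{C}(\wh\Omega)$ to $\mathcal{C}^\infty(\wh\Omega)$ by re-invoking Proposition~\ref{prop-annular} on $\wh\Omega$ itself (which is again annular, since convex hulls of subsets of $\rl^n$ remain in $\rl^n$).

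The main step is the $\mathcal{C}(\wh\Omega)$-convergence. Writing $|e_\alpha(z)| = \exp\langle\alpha, \mathrm{Log}\,z\rangle$, where $\mathrm{Log}\,z = (\log|z_1|,\dots,\log|z_n|)$ is well-defined and continuous on the annular set $\Omega$, the growth of each monomial is controlled by a linear functional on $\mathrm{Log}(\Omega) \subset \rl^n$. Unwinding the defining condition \eqref{eq-logconvex1} in these coordinates, one checks that $\mathrm{Log}(\wh\Omega) = \mathrm{conv}\,\mathrm{Log}(\Omega)$. Given a compact Reinhardt $K\subset\wh\Omega$, a routine covering-plus-Carath\'eodory argument produces a compact $L\subset \mathrm{Log}(\Omega)$ with $\mathrm{Log}(K)\subset \mathrm{conv}(L)$; let $K_1$ be the Reinhardt preimage of $L$ in $\Omega$. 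Since $\exp\langle\alpha,\cdot\rangle$ is convex on $\rl^n$, its supremum on $\mathrm{conv}(L)$ is attained at an extreme point, giving the comparison
\[ \sup_K |e_\alpha| \;\leq\; \sup_{K_1} |e_\alpha| \quad\text{for every } \alpha \in \Z^n. \]
Combined with the $\mathcal{C}(\Omega)$-summability $\sum_\alpha |a_\alpha(T)|\sup_{K_1}|e_\alpha|<\infty$ from Proposition~\ref{prop-annular}, this yields $\sum_\alpha |a_\alpha(T)|\sup_K|e_\alpha|<\infty$, hence absolute convergence in $\mathcal{C}(\wh\Omega)$ to a continuous limit $\tilde f$.

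To upgrade to $\mathcal{C}^\infty(\wh\Omega)$, I would argue as follows. Each $e_\alpha$ is holomorphic on the annular Reinhardt set $\wh\Omega$, and since $\dbar:\DD'(\wh\Omega)\to\DD'(\wh\Omega)$ is continuous, the limit satisfies $\dbar\tilde f=0$, so $\tilde f\in\OO(\wh\Omega)$. Applying Proposition~\ref{prop-annular} directly to $\wh\Omega$, the Laurent expansion of $\tilde f$ converges in $\mathcal{C}^\infty(\wh\Omega)$. Continuity of the Fourier projections together with Proposition~\ref{prop-coefficients}(b), applied to the inclusion $\Omega\hookrightarrow\wh\Omega$, forces these coefficients to agree with the original $a_\alpha(T)$, so the series $\sum_\alpha a_\alpha(T)e_\alpha$ itself converges in $\mathcal{C}^\infty(\wh\Omega)$. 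That $\tilde f|_\Omega$ generates $T$ is then immediate from the $\mathcal{C}^\infty(\Omega)$-convergence of the same series to the function generating $T$ (Proposition~\ref{prop-annular}).

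The principal obstacle I anticipate is the geometric identification $\mathrm{Log}(\wh\Omega) = \mathrm{conv}\,\mathrm{Log}(\Omega)$ together with the accompanying compactness step that traps $\mathrm{Log}(K)$ inside the convex hull of a compact subset of $\mathrm{Log}(\Omega)$; in particular one must be careful that the candidate hull is already annular, so that the ``smallest log-convex annular Reinhardt'' hull really coincides with the preimage of $\mathrm{conv}\,\mathrm{Log}(\Omega)$. Everything afterwards is formal manipulation of series reducing to the machinery of Section~\ref{sec-abstract} and Proposition~\ref{prop-annular}.
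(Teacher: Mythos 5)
Your proposal is correct and follows essentially the same route as the paper: the heart of the matter is the comparison $\sup_K\abs{e_\alpha}\leq \sup_{K_1}\abs{e_\alpha}$ obtained from convexity of $x\mapsto \exp\langle\alpha,x\rangle$ in logarithmic coordinates together with the covering/fattening step that traps $\mathrm{Log}(K)$ in the convex hull of a compact subset of $\mathrm{Log}(\Omega)$, which is exactly the content of the paper's Lemma~\ref{lem-logconvex}. The only (minor) divergence is the upgrade from $\mathcal{C}(\wh{\Omega})$ to $\mathcal{C}^\infty(\wh{\Omega})$: you bootstrap by observing the sum is a holomorphic distribution on the annular domain $\wh{\Omega}$ and re-applying Proposition~\ref{prop-annular} there, matching coefficients via Proposition~\ref{prop-coefficients}(b), whereas the paper invokes Lemma~\ref{lem-weierstrass} (term-by-term differentiation using $\dd^\beta T_\alpha=(\dd^\beta T)_{\alpha-\beta}$); both are valid.
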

Introduce the following notation:
for a compact subset $K\subset \cx^n\setminus Z= (\cx^*)^n$, we let
\begin{equation}\label{eq-khat}
	\wh{K}= \left\{z\in \cx^n\setminus Z: \abs{e_\alpha(z)}\leq \sup_K\abs{e_\alpha}, \text{for all $\alpha\in \Z^n$}\right\}.
\end{equation}
\begin{lem}\label{lem-logconvex}
	Let $\Omega$ be an annular Reinhardt domain in $\cx^n$ and let $\wh{\Omega}$ be the log-convex annular Reinhardt domain containing $\Omega$. Then for each point $p\in \wh{\Omega}$ 
	there is a compact neighborhood $M$ of $p$ in $\wh{\Omega}$ and a compact subset $K\subset \Omega$ such that $M\subset \wh{K}$.  
\end{lem}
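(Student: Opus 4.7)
The plan is to pass to logarithmic coordinates and reduce the lemma to a routine convex-geometry statement in $\rl^n$. For $z \in \cx^n \setminus Z$ write $\log|z| = (\log|z_1|, \ldots, \log|z_n|) \in \rl^n$, and for $K \subset \cx^n \setminus Z$ set $\log K = \{\log|w| : w \in K\}$. The first step is to identify
\[
\wh{K} = \{z \in \cx^n \setminus Z : \log|z| \in \operatorname{conv}(\log K)\},
\]
which follows because the defining inequalities $|e_\alpha(z)| \leq \sup_K|e_\alpha|$ read $\alpha \cdot \log|z| \leq h(\alpha) := \sup_{w \in K}\alpha\cdot\log|w|$: the support function $h$ is positively homogeneous and continuous, so the inequality on $\alpha \in \Z^n$ extends first to $\mathbb{Q}^n$ by clearing denominators and then to all of $\rl^n$ by density, and Hahn--Banach identifies the set of admissible $\log|z|$ with the (automatically closed) convex hull of the compact set $\log K$. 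By the very definition of the log-convex hull, one has the analogous description $\log\wh{\Omega} = \operatorname{conv}(\log\Omega)$.

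Given $p \in \wh{\Omega}$, set $q = \log|p|$. Since $\wh{\Omega}$ is open, $q$ lies in the interior of $\operatorname{conv}(\log\Omega)$; a standard convex-hull argument (Carath\'eodory plus the observation that any interior point of a polytope in $\rl^n$ admits a representation with strictly positive barycentric coefficients) then produces finitely many points $q_0, \ldots, q_N \in \log\Omega$ and strictly positive weights $t_0, \ldots, t_N$ summing to $1$ with $q = \sum_j t_j q_j$. For each $j$, pick a preimage $z_j \in \Omega$ of $q_j$; because $\Omega$ is an open annular Reinhardt domain, one may choose $\delta > 0$ uniformly small so that the closed polyannulus
\[
B_j = \bigl\{w \in \cx^n : \bigl|\log|w_k| - q_{j,k}\bigr| \leq \delta,\; 1 \leq k \leq n\bigr\}
\]
is contained in $\Omega$ for every $j$. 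Set $K = B_0 \cup \cdots \cup B_N$, which is compact in $\Omega$; then $\log B_j$ is the closed $\ell^\infty$-box $L_j$ of radius $\delta$ about $q_j$.

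Finally, for any $y \in \rl^n$ with $\|y - q\|_\infty \leq \delta$, the point $y_j := q_j + (y - q)$ lies in $L_j$ and satisfies $\sum_j t_j y_j = q + (y - q) = y$, so the closed $\delta$-box about $q$ is contained in $\operatorname{conv}(\log K)$. By the first paragraph, $\wh{K}$ therefore contains the open Reinhardt set $U = \{z : \|\log|z| - q\|_\infty < \delta\}$, and one takes $M$ to be any closed polydisc centred at $p$ of radius so small that $M \subset U \cap \wh{\Omega}$, which is possible since both $U$ and $\wh{\Omega}$ are open neighbourhoods of $p$.

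I expect the only genuinely delicate step to be ensuring that the representation $q = \sum t_j q_j$ uses \emph{strictly positive} weights on vertices drawn from $\log\Omega$ itself: this is what allows the Minkowski combination of the boxes $L_j$ to cover a full $n$-dimensional neighbourhood of $q$ rather than merely a lower-dimensional face. Everything else is routine bookkeeping between $\cx^n \setminus Z$ and its logarithmic image.
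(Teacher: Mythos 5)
Your proof is correct and follows essentially the same route as the paper's: pass to logarithmic coordinates, write $\Lambda(p)=\log|p|$ as a convex combination of points of $\Lambda(\Omega)$, fatten those points into compact pieces so that the combination sweeps out a full neighbourhood of $\Lambda(p)$, and conclude from the convexity inequality for the monomials. The differences are cosmetic: the paper fattens only the single point $q^m$ and rescales via the affine automorphism $x\mapsto t_mx+\sum_{k<m}t_k\Lambda(q^k)$ where you fatten all the points and translate uniformly (which, incidentally, makes your worry about strictly positive weights unnecessary), and your Hahn--Banach identification of $\wh{K}$ with $\Lambda^{-1}(\operatorname{conv}(\Lambda(K)))$ proves more than is needed, since only the easy inclusion $\Lambda^{-1}(\operatorname{conv}(\Lambda(K)))\subset\wh{K}$ --- the paper's one-line estimate $\exp\left(\sum_k t_ka_k\right)\leq\max_k e^{a_k}$ --- actually enters the argument.
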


\begin{proof}
    Let $\Lambda:\cx^n\setminus Z\to  \rl^n$ be the map
\begin{equation}\label{eq-Lambda}
	\Lambda(z_1,\dots, z_n)= (\log\abs{z_1},\dots, \log \abs{z_n}).\end{equation}
	From \eqref{eq-logconvex1}, it follows that an annular Reinhardt domain $ V\subset \cx^n$ is  {log-convex} if and 
	only if
the subset $\Lambda(V)$ of $\rl^n$ (called the \emph{ logarithmic shadow } of $V$) is convex. 
The set $\wh{\Omega}$ is characterized by the fact that it  is the
 convex hull of $\Lambda(\Omega)$.
Therefore, if $p\in \wh{\Omega}$, there exist $m$ points $q^1, \dots, q^m\in V\setminus Z$ and positive numbers
 $t_1,\dots, t_m$, with $ \sum\limits_{k=1}^m t_k =1$ such that for $1\leq j \leq n$, we have $\Lambda(p)= \sum_{k=1}^m t_k \Lambda(q^k)$, i.e.,
\[ \log \abs{p_j}=\sum\limits_{k-1}^{m} t_k \log \abs{q_j^k}.\]
In other words $\Lambda(p)$ lies in the convex hull of the $m$-element set $\{\Lambda(q^1),\dots, \Lambda(q^m)\}$.
(By a theorem of Carathéodory, $m\leq n+1$, but we do not need this fact.) 
We will show that we can ``fatten" one the points in this set so that the convex hull of the fattened set contains a neighborhood of the point $\Lambda(p)$.

Without loss of generality, we can assume that $t_m\not=0$. Consider the affine self-map  $F$ of $\rl^n$ given by 
\[ F(x)= t_mx + \sum_{k=1}^{m-1} t_k \Lambda (q_j^k),\]
which is clearly an affine automorphism of $\rl^n$.  Therefore if $L$ is a compact neighborhood of $\Lambda(p)$ in $\rl^n$,  then $F^{-1}(L) $ is a compact  neighborhood of  $\Lambda(q^m)$, which after shrinking can be taken to be contained in $\Lambda(\Omega).$
  Let $K_0=\Lambda^{-1}(F^{-1}(L))$, and set 
\[ K=  \{q^1,\dots, q^{m-1}\} \cup K_0.\]
We claim that $\wh{K}$ contains the compact neighborhood $\Lambda^{-1}(L)$ of the point $p$.
Let $\zeta\in \Lambda^{-1}(L)$. Then there is 
a point $z\in K_0$ such that  $\Lambda(\zeta)= \sum_{k=1}^{m-1} t_k \Lambda(q^k)+t_m \Lambda(z)$, i.e. for each $1\leq j \leq n$ we have
\[ \log\abs{\zeta_j}=\sum_{k=1}^{m-1} t_k \log \abs{q^k_j}+ t_m \log \abs{z_j}.\]
 For simplicity of writing, introduce new notation as follows: we let $z^k=q^k$  for $1\leq k \leq m-1$ and $ z^m= z$. 
Then for a
multi-index $\alpha\in \Z^n$ and the point $\zeta\in \Lambda^{-1}(L)$ we have
\begin{align}
	\abs{e_\alpha(\zeta)}&= \prod_{j=1}^{n} \abs{\zeta_j}^{\alpha_j}= \exp\all{\sum_{j=1}^n\alpha_j \log \abs{\zeta_j}}\nonumber\\
	&=  \exp\all{\sum_{j=1}^n\alpha_j \sum_{k=1}^{m} t_k \log\abs{z^k_j} }\nonumber= \exp\all{\sum_{k=1}^m t_k \sum_{j=1}^n\alpha_j  \log\abs{z^k_j}}\\
	&\leq \exp \all{\max_{1\leq k \leq m}\sum_{j=1}^n\alpha_j  \log\abs{z^k_j}} \label{eq-convex}\\
	&=\max_{1\leq k \leq m} \exp\all{\sum_{j=1}^n\alpha_j  \log\abs{z^k_j}}\nonumber\\
	&= \max_{1\leq k \leq m} \abs{e_\alpha(z^k) }= \max\left\{\max_{1\leq k \leq m-1}\abs{e_\alpha(q^k)}, \abs{e_\alpha(z)}\right\}  \nonumber\\
	&\leq \max_{K}\abs{e_\alpha},
 \end{align}
 this shows that $\zeta\in \wh{K}$, and completes the proof.
\end{proof}

\begin{proof}[Proof of Proposition~\ref{prop-logconvex}]
By Lemma~\ref{lem-weierstrass}
it is sufficient to show that the Laurent series $\sum_\alpha a_\alpha(T)e_\alpha$ (whose existence was 
proved in Proposition~\ref{prop-annular}, and whose terms lie in $(\OO\cap \mathcal{C}^\infty)(\Omega)$) converges absolutely 
in $\mathcal{C}(\wh{\Omega})$.  It is sufficient to show that each point $p$ of $\wh{\Omega}$ has a compact 
neighborhood $M$ such that we have 
\[ \sum_\alpha p_M(a_\alpha(T)e_\alpha) <\infty.\]
Now by Lemma~\ref{lem-logconvex}, there is a compact subset $K\subset \Omega$ such that $M\subset \wh{K}$. We observe, using the definition \eqref{eq-khat} that
\begin{align*}
    p_M(a_\alpha(T)e_\alpha)&\leq p_{\wh{K}}(a_\alpha(T)e_\alpha)= \sup_{\wh{K}}\abs{a_\alpha(T)e_\alpha} 
    = \abs{a_\alpha(T)}\cdot \sup_{\wh{K}}\abs{e_\alpha}\\
    & =\abs{a_\alpha(T)}\cdot \sup_{{K}}\abs{e_\alpha}=\sup_{{K}}\abs{a_\alpha(T)e_\alpha}=p_K( a_\alpha(T)e_\alpha),
\end{align*}
so that 
\[ \sum_\alpha p_M(a_\alpha(T)e_\alpha)\leq \sum_\alpha p_K(a_\alpha(T)e_\alpha)<\infty,\]
using the estimate \eqref{eq-pkconv} which holds since $K\subset \Omega$. 
\end{proof}

 \subsection{Laurent series on non-annular Reinhardt domains}We now consider the case of a general (i.e. possibly non-annular) Reinhardt domain $\Omega$. We begin by showing that the only monomials that occur in the 
 Laurent series are the ones smooth on $\Omega:$
\begin{prop}\label{prop-nonannular}
Let $\Omega$ be a (possibly non-annular) Reinhardt domain in $\cx^n$. Then for each $\alpha \in \mathcal{S}(\Omega)$ (where $\mathcal{S}(\Omega)$ is as in \eqref{eq-somega}), there is a continuous linear functional $a_{\alpha} : \OO(\Omega) \rightarrow \cx$ such that the Fourier series of a $T \in \OO(\Omega)$ is of the form 
\begin{equation}\label{eq-nonannular}
T \sim \sum_{\alpha \in \mathcal{S}(\Omega)} a_{\alpha}(T) e_{\alpha}.    
\end{equation}
\end{prop}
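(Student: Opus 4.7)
The approach is to reduce Proposition~\ref{prop-nonannular} to the already-proved annular case. Let $\Omega' = \Omega \setminus Z$. Since each coordinate hyperplane $\{z_j = 0\}$ has real codimension two in $\cx^n$, removing the closed set $Z$ from the connected open set $\Omega$ leaves a connected set; as $\Omega'$ is manifestly open, Reinhardt, and disjoint from $Z$, it is an annular Reinhardt domain in the sense of \eqref{eq-annular}. Proposition~\ref{prop-annular} applied to $\Omega'$ thus supplies, for each $\alpha \in \Z^n$, a continuous linear functional $b_\alpha : \OO(\Omega') \to \cx$ satisfying $(T|_{\Omega'})_\alpha = b_\alpha(T|_{\Omega'})\, e_\alpha$ on $\Omega'$. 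I would define $a_\alpha : \OO(\Omega) \to \cx$ by $a_\alpha(T) = b_\alpha(T|_{\Omega'})$; this is continuous as a composition of $b_\alpha$ with the continuous restriction map $\OO(\Omega) \to \OO(\Omega')$. By part (b) of Proposition~\ref{prop-coefficients}, taking Fourier components commutes with restriction, so $T_\alpha|_{\Omega'} = a_\alpha(T)\, e_\alpha$ on $\Omega'$.

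The remaining task is to promote this equality on $\Omega'$ to a statement on all of $\Omega$, and this is where the already-established Corollary~\ref{cor-smooth} enters decisively. By part (a) of Proposition~\ref{prop-coefficients}, the Fourier component $T_\alpha$ lies in $\OO(\Omega)$ and is therefore generated by a $\mathcal{C}^\infty$ function on $\Omega$. If $\alpha \in \mathcal{S}(\Omega)$, then $a_\alpha(T) e_\alpha$ is likewise smooth on $\Omega$; two smooth functions that agree on the open dense subset $\Omega'$ agree everywhere, so $T_\alpha = a_\alpha(T) e_\alpha$ throughout $\Omega$. If instead $\alpha \notin \mathcal{S}(\Omega)$, I would argue by contradiction: were $a_\alpha(T) \neq 0$, the smooth function $T_\alpha/a_\alpha(T)$ on $\Omega$ would furnish a $\mathcal{C}^\infty$ extension of $e_\alpha|_{\Omega'}$ to all of $\Omega$, contradicting $e_\alpha \notin \mathcal{C}^\infty(\Omega)$. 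Hence $a_\alpha(T) = 0$, and $T_\alpha$, being a smooth function vanishing on the dense set $\Omega'$, vanishes on all of $\Omega$.

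Combining the two cases, the Fourier series of $T$ reduces to
\[
T \sim \sum_{\alpha \in \Z^n} T_\alpha = \sum_{\alpha \in \mathcal{S}(\Omega)} a_\alpha(T)\, e_\alpha,
\]
which is precisely the expansion \eqref{eq-nonannular}. The only real subtlety is crossing from $\Omega'$ to $\Omega$ over the coordinate hyperplanes in $Z$; the clean way to accomplish this is to invoke the smoothness of holomorphic distributions provided by Corollary~\ref{cor-smooth}, which automatically eliminates the monomials that would carry poles on $\Omega \cap Z$ and confirms that monomials already smooth on $\Omega$ extend unchanged across $Z$.
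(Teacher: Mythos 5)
Your proposal is correct and follows essentially the same route as the paper: restrict to the annular domain $\Omega\setminus Z$, apply Proposition~\ref{prop-annular} there, use Proposition~\ref{prop-coefficients} to identify $T_\alpha|_{\Omega\setminus Z}$ with $a_\alpha(T)e_\alpha$, and then invoke Corollary~\ref{cor-smooth} to conclude that $T_\alpha$ is smooth on all of $\Omega$, which forces $a_\alpha(T)=0$ whenever $e_\alpha$ fails to extend smoothly across $Z$. Your added remark that $\Omega\setminus Z$ remains connected (real codimension two) is a small point the paper leaves implicit, but otherwise the two arguments coincide.
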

\begin{proof}
  Since $T\in \OO(\Omega)$ we have  $T|_{\Omega\setminus Z}\in \OO(\Omega \setminus Z)$.   Notice that $\Omega \setminus Z$ is 
an annular Reinhardt domain and therefore the proof of Proposition~\ref{prop-annular} shows that the Fourier components 
are given for $\alpha\in \Z^n$ by 
\[ \all{T|_{\Omega\setminus Z}}_\alpha =\wh{a}_\alpha(T|_{\Omega\setminus Z}) e_\alpha,\]
where $\wh{a}_\alpha:\OO(\Omega \setminus Z)\to \cx$ is the $\alpha$-th coefficient functional associated to the domain $\Omega\setminus Z$ (see Proposition~\ref{prop-annular}). Thanks  to \eqref{eq-restriction}, we know that $ \all{T|_{\Omega\setminus Z}}_\alpha= (T_\alpha)|_{\Omega\setminus Z}$, so we have
\[(T_\alpha)|_{\Omega\setminus Z}= \wh{a}_\alpha(T|_{\Omega\setminus Z}) e_\alpha.\]
By Proposition~\ref{prop-coefficients}, the Fourier components of a holomorphic distribution are holomorphic distributions, so 
we have $T_\alpha\in \OO(\Omega)$ since $T\in \OO(\Omega)$. Therefore, since by Corollary~\ref{cor-smooth},  each holomorphic 
distribution is a smooth function,  we know that $T_\alpha\in \mathcal{C}^\infty (\Omega)$.
 Therefore,
the function $T_\alpha|_{\Omega\setminus Z}=\wh{a}_\alpha(T|_{\Omega\setminus Z}) e_\alpha$ admits a $\mathcal{C}^\infty$ extension through $Z$. If $\wh{a}_\alpha(T|_{\Omega\setminus Z})\not=0$ for some $T\in \OO(\Omega)$, 
this means that $e_\alpha$ itself admits a $\mathcal{C}^\infty$ extension to $\Omega$, i.e., $\alpha\in \mathcal{S}(\Omega)$, where 
$\mathcal{S}(\Omega)$ is as in \eqref{eq-somega}.
Therefore if $\alpha\not \in \mathcal{S}(\Omega)$, the corresponding term in the Laurent series of $T|_{\Omega\setminus Z}$ vanishes, 
and the series takes the form
\[ T|_{\Omega\setminus Z}=\sum_{\alpha\in \mathcal{S}(\Omega)} \wh{a}_\alpha(T|_{\Omega\setminus Z}) e_\alpha.\]

Now for each $\alpha \in \mathcal{S}(\Omega)$ define the map $a_\alpha:\OO(\Omega)\to \cx$ by $a_\alpha(T)=\wh{a}_\alpha(T|_{\Omega \setminus Z} ).$ Since both the restriction map and the coefficient functional $\wh{a}_\alpha$ are continuous, it follows that $a_\alpha$ is 
continuous. The extension of $e_\alpha$ from $\Omega\setminus Z$ to $\Omega$ is still the monomial $e_\alpha$, 
so the Fourier series of $T$ in $\OO(\Omega)$ is of the form \eqref{eq-nonannular}.
\end{proof}
Notice that each term of \eqref{eq-nonannular} 
is in $\mathcal{C}^\infty(\Omega)$ and by Proposition~\ref{prop-annular}, 
the series converges absolutely in $\mathcal{C}^\infty(\Omega \setminus Z)$ 
when  its terms are restricted to $\Omega\setminus Z$, i.e., it converges   uniformly along with all derivatives on those compact sets  in $\Omega$ which  do not intersect
$Z$.
 
\subsection{Extension to the relative completion}
Given a Reinhardt domain $D\subset \cx^n$, its \emph{relative completion} is the smallest 
relatively complete domain containing $D$ (see above before the statement of Theorem~\ref{thm-distn} for the definition of relative completeness of a domain.)  Notice that the relative completion of $D$ coincides with the unions of the sets $D^{(j)}$ of \eqref{eq-relcomp}, where the union is taken over those $j$ for which $D\cap \{z_j=0\}$ is nonempty. The following
general proposition, which encompasses classical examples of 
the Hartogs phenomenon, e.g. in the ``Hartogs figure", will be needed
to complete the proof of Theorem~\ref{thm-distn}.
\begin{prop}\label{prop-relcompletion}
        Let $D$ be a Reinhardt domain. Then each holomorphic function on $D$ extends holomorphically to the relative completion of $D$.
\end{prop}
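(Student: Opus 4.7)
The plan is to leverage the Laurent series established in Proposition~\ref{prop-nonannular}. Identifying $f\in\OO(D)$ with a $\mathcal{C}^\infty$ function via Corollary~\ref{cor-smooth}, we write
\[
f \sim \sum_{\alpha\in\mathcal{S}(D)} a_\alpha(f)\,e_\alpha,
\]
and set $J=\{j : D\cap\{z_j=0\}\neq\emptyset\}$, so that the relative completion of $D$ is $D^+:=D\cup\bigcup_{j\in J}D^{(j)}$. The first key observation is that for every $\alpha\in\mathcal{S}(D)$ and every $j\in J$, one must have $\alpha_j\geq 0$: indeed $e_\alpha$ has to be smooth at some point of $D$ with $z_j=0$, which fails if $\alpha_j<0$. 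Consequently each monomial $e_\alpha$ in the series is a polynomial in the variables $\{z_j\}_{j\in J}$, so it extends holomorphically from $D$ to $D^+$, and the formal term-by-term extension of the Laurent series is well-defined on $D^+$.

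The heart of the matter is then to show that this formal series converges absolutely in $\mathcal{C}^\infty(D^+)$. By Lemma~\ref{lem-weierstrass}, it suffices to establish absolute convergence in $\mathcal{C}(D^+)$, i.e., $\sum_\alpha |a_\alpha(f)|\sup_K|e_\alpha|<\infty$ for each compact $K\subset D^+$. The mechanism is a \emph{Reinhardt domination lemma}: for each such $K$ there exists a compact set $K_0\subset D\setminus Z$ such that for every $w\in K$ some $z\in K_0$ satisfies $|z_i|=|w_i|$ for $i\notin J$ and $|z_i|\geq|w_i|$ for $i\in J$. Given $K_0$, the sign condition $\alpha_i\geq 0$ on $J$ immediately yields the monotonicity
\[
|e_\alpha(w)|=\prod_i|w_i|^{\alpha_i}\leq\prod_i|z_i|^{\alpha_i}=|e_\alpha(z)|,
\]
hence $\sup_K|e_\alpha|\leq\sup_{K_0}|e_\alpha|$; the summability of $|a_\alpha(f)|\sup_{K_0}|e_\alpha|$ then follows from Proposition~\ref{prop-annular} applied to the annular Reinhardt domain $D\setminus Z$, which already yields absolute convergence in $\mathcal{C}^\infty(D\setminus Z)$. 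The resulting sum $F\in\mathcal{C}^\infty(D^+)$ is holomorphic because $\mathcal{C}^\infty$-convergence implies convergence in $\DD'(D^+)$ and $\OO(D^+)$ is closed in $\DD'(D^+)$, and $F|_D=f$ is immediate from the uniqueness of the Laurent representation on $D$.

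The main anticipated obstacle is the construction of $K_0\subset D\setminus Z$. For each $w\in D^{(j)}$ one extracts a witness $z(w)\in D$ directly from the definition of $D^{(j)}$; then, using the openness of $D$ and Reinhardt invariance, one perturbs $z(w)$ to push its coordinates off the hyperplanes $\{z_i=0\}$ with $i\in J$ by only \emph{increasing} the relevant moduli, which is compatible with the domination inequalities because $\alpha_i\geq 0$ on $J$. The delicate point is uniformity: one must assemble these pointwise constructions into a single compact $K_0$ valid for all $w\in K$, which should be done by a finite-cover argument on $K$, with Reinhardt symmetry ensuring the perturbations stay inside $D$.
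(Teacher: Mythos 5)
Your proposal is correct and follows essentially the same route as the paper: the sign observation $\alpha_j\geq 0$ for $j\in J$, the reduction via Lemma~\ref{lem-weierstrass} to absolute convergence in the sup-norms, and the domination of $\sup_M\abs{e_\alpha}$ by a supremum over a compact subset of $D$ using monotonicity of $t\mapsto t^{\alpha_j}$ are exactly the paper's three steps. The one step you flag as the "main anticipated obstacle" --- building the dominating compact --- is where the paper does its concrete work, and it sidesteps your perturbation-plus-finite-cover scheme by a purely local product construction: for $p=(\lambda z_1,\wt{z})\in D^{(1)}\setminus D$ with witness $z\in D$, it takes $K=K_1\times\wt{K}\subset D$ with $K_1$ the closed $\epsilon$-disk about $z_1$, and $M=L_1\times\wt{K}$ with $L_1=\{\abs{\zeta}\leq\abs{z_1}+\epsilon\}$; since $K_1$ and $L_1$ have the same maximum modulus $\abs{z_1}+\epsilon$ and $\alpha_1\geq 0$, one gets $p_M(e_\alpha)=p_K(e_\alpha)$ outright, and the exact-equality bookkeeping in the coordinates outside $J$ is automatic because those coordinates range over the \emph{same} set $\wt{K}$ in both $M$ and $K$. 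If you carry out your version, note that requiring $\abs{z_i}=\abs{w_i}$ for $i\notin J$ forces the witness set attached to each point of the cover to be a full product compact rather than a single point, which is precisely what the paper's $K_1\times\wt{K}$ accomplishes; on the other hand your insistence that $K_0\subset D\setminus Z$ is actually slightly more careful than the paper, whose compact $K$ may meet $Z$ even though the summability estimate \eqref{eq-pkconv} is established for compacts in annular domains.
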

\begin{proof} We may assume that $n\geq 2$ since each Reinhardt 
domain in the plane is automatically relatively complete. 
 If for each  $j$, the  intersection $D\cap \{z_j=0\}=\emptyset$, then the domain 
    $D$ is annular, and its relative completion is itself so there 
    is nothing to prove.
    Suppose  therefore that there is $1\leq j\leq n$ such that $D\cap \{z_j=0\}\not=\emptyset$. We need to prove that each function in 
    $\OO(D)$ extends holomorphically to  $D^{(j)}$.

    Without loss of generality 
    we can assume that $j=1$. Write the coordinates of a point $z\in \cx^n$ as $z=(z_1,\wt{z})$ where $\wt{z}\in \cx^{n-1}$. Let $f\in \OO(D)$. By Proposition~\ref{prop-nonannular} $f$ admits a Laurent series 
    representation
    \[ f= \sum_{\alpha\in \mathcal{S}(D)} a_\alpha(f)e_\alpha\]
    with $S(D)$ as in \eqref{eq-somega}, and the series converges 
    absolutely in $\mathcal{C}^\infty(D\setminus Z)$. To prove the proposition
    it suffices to show that the series in fact converges absolutely
    in $\mathcal{C}^\infty(D^{(1)})$,  which by Lemma~\ref{lem-weierstrass} is equivalent to the following:  for each point 
    $p\in D^{(1)}$ there is a compact neighborhood $M$ of $p$ in $D^{(1)}$ such that 
    \begin{equation}\label{eq-pm}
       \sum_{\alpha\in \mathcal{S}(D)} p_M(a_\alpha(f)e_\alpha) <\infty. 
    \end{equation} 
    
    We claim the following: for each $p\in D^{(1)}$ there is 
    a compact neighborhood $M$ of $p$ and a compact subset $K\subset D$ such that $p_M(e_\alpha)\leq p_K(e_\alpha)$ for 
    each $\alpha\in \mathcal{S}(D)$. Assuming the claim for a moment, we see that we have
    \[ p_M(a_\alpha(f)e_\alpha)= \abs{a_\alpha(f)}p_M(e_\alpha)\leq \abs{a_\alpha(f)}p_K(e_\alpha)= p_K(a_\alpha(f)e_\alpha),\]
    so that \eqref{eq-pm} follows since by \eqref{eq-pkconv} we do have $\sum_{\alpha\in \mathcal{S}(D)} p_{K}(a_\alpha(f)e_\alpha) <\infty$
    for a compact subset $K$ of $D$. Since each point $p\in D^{(1)}$ has such a neighborhood this completes the proof, modulo the claim above.
    
    To establish the claim, we may assume that $p\not \in D$, since 
    otherwise there is nothing to prove. Therefore $p\in D^{(1)}\setminus D$, and consequently, there is a $z\in D$ and 
    a $\lambda\in \D$ such that $p=(\lambda z_1, \wt{z})$, where $\wt{z}=(z_2,\dots, z_n)$. Let $K$ be a compact neighborhood 
    of $z$ in $D$ of the form $K=K_1\times \wt{K}$, where $\wt{K}\subset \cx^{n-1}$ and $K_1=\{\zeta\in \cx: \abs{\zeta-z_1}\leq \epsilon\}$ is a closed disk. Let $L_1$ be the disk $\{\zeta\in \cx: \abs{\zeta}\leq \abs{z_1}+\epsilon\}$,
    so that $K_1\subset L_1$, and
    \[ \gamma = {z_1}+ \frac{z_1}{\abs{z_1}}\epsilon \]
    is a point of maximum modulus (i.e. maximum distance from the origin) in both sets $K_1$ and $L_1$. Note that 
    \[ \abs{\gamma}= \abs{\frac{z_1}{\abs{z_1}}\cdot \abs{z_1}+ \frac{z_1}{\abs{z_1}}\epsilon}= \abs{z_1}+\epsilon.\]
    We set $M=L_1\times \wt{K}$. We will show that these sets $K, M$
    satisfy the conditions of the claim.

    Now let $\alpha\in \mathcal{S}(D)$. Since $D\cap \{z_1=0\}\not=\emptyset$, it follows that $\alpha_1\geq 0$. 
    Let $\wt{\alpha}=(\alpha_2,\dots, \alpha_n)\in \Z^{n-1}$, and 
    set
    \[B=  \sup_{\wt{w} \in \wt{K}} \abs{\wt{w}^{\wt{\alpha}}} \quad \text{where $\wt{w}=(w_2,\dots, w_n) \in \cx^{n-1}$ and $ \abs{\wt{w}^{\wt{\alpha}}}= \abs{w_2^{\alpha_2}} \cdots \abs{w_n^{\alpha_n}}$},\]
     so that we have
    \[ p_M(e_\alpha)=\sup_{w\in M}\abs{e_\alpha(w)}= \sup_{w_1\in L_1}\abs{w_1}^{\alpha_1}\cdot B = (\abs{z_1}+\epsilon)^{\alpha_1} \cdot B.\]
    On the other hand
    \[ p_K(e_\alpha) =\sup_{w\in K}\abs{e_\alpha(w)}=\sup_{w_1\in K_1}\abs{w_1}^{\alpha_1} \cdot B= (\abs{z_1}+\epsilon)^{\alpha_1} \cdot B.\]
    Consequently, in fact we have $p_M(e_\alpha)=p_K(e_\alpha)$, and the claim is proved, thus completing the proof. 
    \end{proof}

\subsection{End of proof of Theorem~\ref{thm-distn}}\label{sec-hartogs}
It only remains to put together the various pieces to note that all parts of Theorem~\ref{thm-distn} have been established. If $\Omega$ is annular, i.e. $\Omega$ has empty intersection with the set $Z$ of \eqref{eq-Z}, then Proposition~\ref{prop-annular} takes care of the 
complete proof. When $\Omega$ is allowed to be non-annular, we see from Proposition~\ref{prop-nonannular} that 
the Laurent series representation has only monomials which are smooth functions on $\Omega$. Now it is not difficult to see that 
the smallest log-convex relatively complete Reinhardt domain $\wh{\Omega}$ 
containing $\Omega$ can be constructed from $\Omega$ in two steps.
First, we construct the log-convex hull $\Omega_1$ of the set $\Omega\setminus Z$. Notice that $\Omega\setminus Z$  and $\Omega_1$ are both annular.
The second step consists of constructing the relative completion of the domain $\Omega_1$, thus obtaining  the domain $\wh{\Omega}$.
Now by Proposition~\ref{prop-logconvex}, the Laurent series of a holomorphic distribution on $\Omega$ converges absolutely in $\mathcal{C}^\infty(\Omega_1)$. Applying Proposition~\ref{prop-relcompletion} (with $D=\Omega_1$), we see that the 
Laurent series actually converges absolutely in the space $\mathcal{C}^\infty(\wh{\Omega})$. The sum of this series is the required
holomorphic extension of a given holomorphic distribution on $\Omega$. The result has been completely established.

\section{Missing monomials}\label{sec-missing}
In Theorem~\ref{thm-distn}, we considered the natural representation of the torus $\T^n$ on the space $\OO(\Omega)$ of holomorphic functions on a Reinhardt domain $\Omega$. In applications,
one often deals with a subspace of functions $Y\subset \OO(\Omega)$ such that
\begin{enumerate}
	\item the subspace $Y$ is invariant under the natural representation $\sigma$, i.e., if $f\in Y$ then $f\circ \rot_\lambda\in Y$ for each $\lambda\in 
	\T^n$, where $\rot_\lambda$ is as in \eqref{eq-rot},
		\item there is a locally convex topology on $Y$ in which it is complete, 
	and such that the inclusion map
	\[ j:Y\hookrightarrow\OO(\Omega)\]
is continuous, 
	and 
	\item when $Y$ is given this topology, the representation $\sigma$ restricts to a continuous representation on $Y$.
\end{enumerate}
The \emph{locus classicus} here is the theory of Hardy spaces on the disc. We can make the following elementary observation:
\begin{prop}\label{prop-missing}
	Let $Y$ and $\sigma$ be as above, and set 
	\[ \mathcal{S}(Y)= \{\alpha\in \Z^n: e_\alpha\in Y\}.\]
	Then the Laurent series of a function $f\in Y$ is of the form
	\[f = \sum_{\alpha\in \mathcal{S}(Y) }a_\alpha(f)e_\alpha.\]
	\end{prop}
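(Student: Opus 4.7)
The plan is to deduce this proposition as a direct consequence of the naturality property of Fourier components, namely Part (2) of Proposition~\ref{prop-fourierproperties}, applied to the continuous intertwining inclusion $j\colon Y\hookrightarrow \OO(\Omega)$. Denote by $\tau$ the continuous representation of $\T^n$ on $Y$ obtained by restricting $\sigma$. Since $j$ intertwines $\tau$ and $\sigma$ by hypothesis, \eqref{eq-equivariance} gives
\[ j\circ \ppi^\tau_\alpha = \ppi^\sigma_\alpha \circ j \quad\text{for every } \alpha\in\Z^n.\]
In particular, for each $f\in Y$, the $\alpha$-th Fourier component $\ppi^\sigma_\alpha(f)$ (computed inside $\OO(\Omega)$) actually belongs to the subspace $Y$, because it equals $j(\ppi^\tau_\alpha(f))$.

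Next I combine this observation with Proposition~\ref{prop-nonannular}, which identifies the Fourier component inside $\OO(\Omega)$ as $\ppi^\sigma_\alpha(f) = a_\alpha(f)\cdot e_\alpha$ whenever $\alpha\in \mathcal{S}(\Omega)$, and as $0$ otherwise (note that $\mathcal{S}(Y)\subset \mathcal{S}(\Omega)$ trivially, since $Y\subset \OO(\Omega)$ and each holomorphic distribution is smooth by Corollary~\ref{cor-smooth}). Hence for every $f\in Y$ and every $\alpha\in \mathcal{S}(\Omega)$, we have $a_\alpha(f)\cdot e_\alpha\in Y$. If $\alpha\notin \mathcal{S}(Y)$, then $e_\alpha\notin Y$, and since $Y$ is a linear subspace, the only way for $a_\alpha(f)\cdot e_\alpha$ to lie in $Y$ is that $a_\alpha(f)=0$. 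Consequently the Laurent expansion of $f$ guaranteed by Proposition~\ref{prop-nonannular} reduces to a sum over $\alpha\in \mathcal{S}(Y)$, which is exactly what we wanted.

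There is essentially no serious obstacle here: the entire argument is a packaging of the abstract machinery from Section~\ref{sec-abstract}. The only point one has to be slightly careful about is verifying that the hypotheses for Part~(2) of Proposition~\ref{prop-fourierproperties} are genuinely met by $j$, but this is built into the assumptions on $Y$ (continuity of $j$, invariance of $Y$, and continuity of the restricted representation). Once $\ppi^\sigma_\alpha(f)\in Y$ has been established, the extraction of missing monomials is a one-line linear-algebra observation.
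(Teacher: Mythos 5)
Your proposal is correct and follows essentially the same route as the paper: both arguments use the intertwining identity \eqref{eq-equivariance} from Proposition~\ref{prop-fourierproperties} to place the Fourier component $\ppi^\sigma_\alpha(f)=a_\alpha(f)e_\alpha$ inside $Y$, and then conclude $a_\alpha(f)=0$ whenever $e_\alpha\notin Y$ since $Y$ is a linear subspace. The only cosmetic difference is that you cite Proposition~\ref{prop-nonannular} explicitly for the identification of the Fourier component, where the paper refers back to the proof of Theorem~\ref{thm-distn}; the content is identical.
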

\begin{proof}
	It suffices to show that if for $\alpha\in \Z^n$, if  the monomial $e_\alpha$ does not belong to $Y$,  we have that $a_\alpha(f)=0$, where $a_\alpha(f)$ is the Laurent coefficient of $f$ as in \eqref{eq-laurent}. 
	By part (1) of Proposition~\ref{prop-fourierproperties}, we see that $\ppi^\sigma_\alpha(f)\in Y$. By part (2) of the same proposition, taking $X$ to be the space $\OO(\Omega)$,
	 for $f\in Y$ we have that $\ppi^\sigma_\alpha(j(f))=j(\ppi^\sigma_\alpha(f))$, and from the description of the Fourier components of a holomorphic function in the proof of Theorem~\ref{thm-distn}, we see that 
	$\ppi^\sigma_\alpha(j(f))=a_\alpha(j(f))e_\alpha= a_\alpha(f)e_\alpha$. Therefore $j(\ppi^\sigma_\alpha(f))=a_\alpha(f)e_\alpha$, which contradicts the fact that $\ppi^\sigma_\alpha(f)\in Y$ unless $a_\alpha(f)=0$.
	\end{proof}
This simple observation can be called  the ``principle of missing monomials", since it says that certain monomials cannot occur in the Laurent series of the function $f$.  It can be thought to be the  reason behind several  phenomena associated to holomorphic functions. We consider two examples:

\begin{enumerate}[wide]
\item \textbf{Bergman spaces in Reinhardt domains:}
Let $\Omega$ be a Reinhardt domain  in $\cx^n$ and let $\lambda>0$ be a radial weight on $\Omega$,  i.e., for $z\in \Omega$ we have
\[ \lambda(z_1,\dots, z_n)= \lambda(\abs{z_1},\dots,\abs{z_n}).\]
The\emph{ $L^p$-Bergman space} $A^p(\Omega ,\lambda)$ is defined to be the subspace of the weighted $L^p$-space $L^p(\Omega, \lambda)$ consisting of holomorphic functions, where 
the norm on  the weighted $L^p$-space is given by
\[ \norm{f}_{L^p(\Omega,\lambda)}^p = \int_\Omega \abs{f}^p \lambda dV,\]
where $dV$ is the Lebesgue measure. It is well-known that $A^p(\Omega,\lambda)$ is a closed subspace of the Banach space $L^p(\Omega, \lambda)$ and therefore a Banach space (\cite{durenbergman}). It is also easy to see (using standard facts about $L^p$-spaces) that the natural representation $\sigma$ of $\T^n$ on $L^p(\Omega, \lambda)$ is continuous for 
$1\leq p <\infty$, so it follows that the representation on $A^p(\Omega, \lambda)$ is also continuous. It now follows from Proposition~\ref{prop-missing} that the Laurent series expansion of a function $f\in A^p(\Omega, \lambda)$ consists only of terms with monomials 
$e_\alpha$ such that $e_\alpha\in L^p(\Omega, \lambda)$. The case $\lambda \equiv 1$ of this fact was deduced by a different argument in \cite{ChEdMc}.

\item \textbf{Extension of holomorphic functions smooth up to the boundary:} Let $\Omega\subset \cx^n$ be a  Reinhardt domain such that the origin (which is the center of symmetry) is
on the boundary of $\Omega$. A classic example of this is the \emph{Hartogs triangle} $\{\abs{z_1}<\abs{z_2}<1\}$ in $\cx^2$.  In \cite{sibony}, the following extension theorem was proved: there is 
a complete Reinhardt  domain $V\subset \cx^n$ such that $\Omega\subset V$  and each function in the space $\mathcal{O}(\Omega)\cap \mathcal{C}^\infty(\ol{\Omega})$  of holomorphic functions on
$\Omega$ smooth up to the boundary extends holomorphically to the domain $V$. This was noted for the Hartogs triangle in \cite{sibony1975}, where $V$ is the unit bidisk.

To deduce this from the principle of missing monomials, it suffices to consider the case when $\Omega$ is bounded. 
 We notice that  $\mathcal{O}(\Omega)\cap \mathcal{C}^\infty(\ol{\Omega})$  is a  Fréchet space in its usual topology of 
uniform convergence on $\ol{\Omega}$ with all partial derivatives. A generating family of seminorms for this topology is 
given by the norms $\{p_k\}$ where $p_k(f)=\norm{f}_{C^k(\ol{\Omega})}$. The natural representation $\sigma$ on $\OO(\Omega)$ restricts to a continuous representation
 on $\mathcal{O}(\Omega)\cap \mathcal{C}^\infty(\ol{\Omega})$. Therefore, the principle of missing monomials applies and the only monomials $e_\alpha$ that occur in the Laurent 
 expansion of a function $f\in \mathcal{O}(\Omega)\cap \mathcal{C}^\infty(\ol{\Omega})$  are such that $e_\alpha \in\mathcal{O}(\Omega)\cap \mathcal{C}^\infty(\ol{\Omega})$. For such a multi-index $\alpha$, write the multiindex $\alpha= \beta-\gamma$, where $\beta_j=\max(\alpha_j,0)$ and  $\gamma_j=(-\alpha_j,0)$. Then $\beta, \gamma\in \N^n$ , and we can apply the differential operator $\left(\frac{\partial}{\partial z}\right)^\beta$ to obtain 
 \[ \left(\frac{\partial}{\partial z}\right)^\beta e_\alpha(z)= \left(\frac{\partial}{\partial z}\right)^\beta\frac{e_\beta(z)}{e_\alpha(z)}= \frac{\beta!}{z^\gamma}. \]
 Since $e_\alpha \in \mathcal{C}^\infty(\ol{\Omega})$ this means that $e_\gamma \in \mathcal{C}^\infty(\ol{\Omega})$, which is possible only if $\gamma=0$. Thus $\alpha\in \N^n$, and the Laurent series of  each function $f\in \mathcal{O}(\Omega)\cap \mathcal{C}^\infty(\ol{\Omega})$ is a Taylor series which converges in some complete (log-convex) Reinhardt domain $V$, and this $V$ must strictly contain $\Omega$, since $\Omega$ is not complete.

\end{enumerate}
		
	\section{Classical characterizations of holomorphic functions}\label{sec-morera}

In this section we show how one can avoid the machinery of generalized functions and weak 
derivatives altogether, and still use Fourier methods to prove the basic facts of function theory. We confine ourselves to one complex variable and the simple geometry of the disk.

\subsection{Goursat's characterization} 
The starting point of a traditional account of holomorphic functions of a single variable is 
typically  Goursat's definition ( \cite{goursat}): a function $f:\Omega\to \cx$ on an 
open subset $\Omega\subset \cx$ is holomorphic, if it is \emph{complex differentiable}, i.e., for each point $w\in \Omega$,
the limit 
\begin{equation}
	\label{eq-compder}{\lim_{z\to w}\frac{f(z)-f(w)}{z-w}}
\end{equation}
exists. The result that a holomorphic function in this sense is infinitely many times complex differentiable and even admits a convergent power series representation 
near each point is rightly celebrated as one of the most elegant and surprising in all of mathematics. Unfortunately, we cannot use it as a definition, 
if we want to apply the theory of abstract Fourier expansions as developed in Section~\ref{sec-abstract}.  Denoting by $G(\Omega)$ the collection of holomorphic functions in the sense 
of Goursat in an open set $\Omega\subset \cx$, we notice that the space $G(\Omega)$ does not have 
a nice a priori linear locally convex topology  in which it is complete and such that when $\Omega$ is a disc or an annulus, the natural action of the group $\T$ on the space $G(\Omega)$ is a continuous representation.  Though Goursat's definition
carries the weight of a century of academic tradition, we will start from an alternative definition  which lends itself better to the application of the methods of Section~\ref{sec-abstract}.
We also note that  the characterization of holomorphic functions by complex-differentiability cannot be used for natural generalizations of complex analysis, e.g. quaternionic analysis,   analysis  on  Clifford algebras etc. (see \cite[pp. 87--93]{gilbertmurray}).

\subsection{Morera's definition} 
Let $\Omega$ be an open subset of the complex plane $\cx$, and let $f:\Omega\to \cx$
be a continuous function.  In honor of \cite{morera}, let us say that the function 
$f$ is \emph{holomorphic in the sense of Morera} (\emph{Morera-holomorphic} for 
short) if for each triangle
$T$ contained (with its interior) in $\Omega$, we have the vanishing of the  complex 
line integral of $f$ around the boundary of $T$:
\begin{equation}
	\label{eq-cauchygoursat}\int_{\partial T} f(z)dz=0,
\end{equation}
where $\partial T$ denotes the boundary of $T$, oriented counterclockwise.
For an open set $\Omega\subset\cx$, let us denote by $\om(\Omega)$ the collection of Morera-holomorphic functions on $\Omega$.  It is known by Morera's theorem that a Morera-holomorphic function is Goursat-holomorphic, and one can develop function theory 
starting from Morera's definition (see \cite{heffter, mac-wil}). Notice that the 
a priori regularity of Morera-holomorphic functions (assumed to be only continuous)
 is even less than that assumed for Goursat-holomorphic functions (assumed also to admit the 
 limit \eqref{eq-compder} at each $w$.)

It is immediate from the definition that $\om(\Omega)$ is a closed linear subspace 
of the Fréchet space $\mathcal{C}(\Omega)$ of continuous functions. ``Closed" means that 
the limit of a sequence of Morera-holomorphic functions converging uniformly on 
compact subsets of $\Omega$ is itself Morera-holomorphic, a fact that was already noted 
in \cite{morera1}.  The proof of this crucial fact  starting from the Goursat definition
must pass through a lengthy  development of integral representations, 
so this is definitely a pedagogical advantage of Morera's definition over Goursat's.

The notion of Morera-holomorphicity is local: i.e., $f\in \om(\Omega)$ if and only if 
there is an open cover $\{\Omega_j\}_{j\in J}$ of $\Omega$ such that $ f|_{\Omega_j}\in \om(\Omega_j)$ for each $j$.  One half of this claim is trivial, and for the other half, 
for a triangle $T$ in $\Omega$, we can 
perform repeated barycentric subdivisions till the triangles so formed are each contained in some element of the open cover $\{\Omega_j\}_{j\in J}$. We therefore conclude that 
$\om$ is a \emph{sheaf} of Fr\'{e}chet spaces on $\cx$.

The following local description of Morera-holomorphic functions is  well-known, and a proof can be found in e.g. \cite[pp. 186-189]{remmert}. 
\begin{prop}\label{prop-equivalent}
	Let $\Omega\subset\cx$ be convex. Then the following statements about a continuous function  $f\in \mathcal{C}(\Omega)$ are equivalent:
	\begin{enumerate}[label=(\Alph*)]
		\item $f\in \om(\Omega)$.
		\item $f$ has a holomorphic primitive, i.e., there is an $F$ which is complex-differentiable on $\Omega$ and $F'=f$.
		\item for each piecewise $\mathcal{C}^1$ closed path $\gamma$ in $\Omega$
		we have 
		\begin{equation}
			\label{eq-cauchyc1}\int_\gamma f(z)dz=0.
		\end{equation}
	\end{enumerate}
\end{prop}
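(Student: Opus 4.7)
The plan is to prove the cycle $(C) \Rightarrow (A) \Rightarrow (B) \Rightarrow (C)$. The implication $(C) \Rightarrow (A)$ is immediate: the boundary of any triangle $T \subset \Omega$ (which lies in $\Omega$ together with its interior by convexity) is a piecewise $\mathcal{C}^1$ closed path in $\Omega$, and applying hypothesis $(C)$ to $\gamma = \partial T$ yields \eqref{eq-cauchygoursat}.

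The heart of the argument is $(A) \Rightarrow (B)$. I would fix a basepoint $z_0 \in \Omega$ and define the candidate primitive
\[
F(z) = \int_{[z_0, z]} f(\zeta)\, d\zeta,
\]
where $[z_0, z]$ denotes the straight-line segment, parametrized by $\zeta(t) = z_0 + t(z-z_0)$ for $t \in [0,1]$; convexity of $\Omega$ ensures this segment lies in $\Omega$. To verify $F'(z) = f(z)$, for $z \in \Omega$ and sufficiently small $h \in \cx$ I would consider the triangle $T$ with vertices $z_0$, $z$, $z+h$, which lies in $\Omega$ again by convexity. Hypothesis $(A)$ applied to $T$, combined with the additivity of the line integral over its three edges (traversed with the correct orientation), gives the crucial triangle identity
\[
F(z+h) - F(z) = \int_{[z, z+h]} f(\zeta)\, d\zeta.
\]
The conclusion then follows from the standard estimate
\[
\left\lvert \frac{F(z+h) - F(z)}{h} - f(z) \right\rvert = \left\lvert \frac{1}{h}\int_{[z,z+h]} (f(\zeta) - f(z))\, d\zeta \right\rvert \leq \sup_{\zeta \in [z, z+h]} \lvert f(\zeta) - f(z) \rvert,
\]
whose right-hand side tends to $0$ as $h \to 0$ by continuity of $f$.

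For $(B) \Rightarrow (C)$, note that since $F$ is complex-differentiable with $F' = f$ continuous, the real partial derivatives $\partial F/\partial x = F'$ and $\partial F/\partial y = iF'$ exist and are continuous, so $F$ is of class $\mathcal{C}^1$ viewed as a map $\Omega \to \rl^2$. For any piecewise $\mathcal{C}^1$ closed path $\gamma : [a,b] \to \Omega$, the ordinary chain rule yields $\frac{d}{dt}[F(\gamma(t))] = F'(\gamma(t))\gamma'(t) = f(\gamma(t))\gamma'(t)$ at each point where $\gamma'$ exists, and the fundamental theorem of calculus gives
\[
\int_\gamma f(z)\, dz = \int_a^b f(\gamma(t))\gamma'(t)\, dt = F(\gamma(b)) - F(\gamma(a)) = 0.
\]
The main obstacle is the triangle identity in $(A) \Rightarrow (B)$: it is the single place where the Morera hypothesis enters, and all other steps reduce to standard one-variable calculus.
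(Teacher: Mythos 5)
Your proof is correct and follows exactly the route the paper has in mind: the paper does not prove Proposition~\ref{prop-equivalent} itself but defers to Remmert, and the one ingredient it does recall --- the primitive $F(z)=\int_{[z_0,z]}f(\zeta)\,d\zeta$ built from a basepoint using convexity --- is precisely the construction at the heart of your argument. The cycle $(C)\Rightarrow(A)\Rightarrow(B)\Rightarrow(C)$, the triangle identity giving $F(z+h)-F(z)=\int_{[z,z+h]}f$, and the continuity estimate are all the standard steps, so there is nothing to add.
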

Recall that assuming (A), the primitive $F$ in (B) 
is constructed by fixing $z_0\in \Omega$, and setting
$ \displaystyle{F(z)= \int_{[z_0,z]}f(\zeta)d\zeta}$
where $[z_0,z]$ denotes the line segment from $z_0$ to $z$.
Proposition~\ref{prop-equivalent}  allows us to give examples of Morera-holomorphic functions. Recall from \eqref{eq-ealpha} that for an integer $n$, we use the notation $	e_n(z)=z^n$
for the holomorphic monomials.
\begin{prop} \label{prop-en} If $n\geq 0$ then
	$e_n\in \om(\cx)$ and if $n<0$ then  $e_n\in \om(\cx\setminus\{0\})$.
\end{prop}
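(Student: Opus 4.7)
The approach is to combine Proposition~\ref{prop-equivalent} (on a convex open set, Morera-holomorphicity is equivalent to the existence of a complex-differentiable primitive) with the sheaf property of $\om$ noted just above that proposition. It thus suffices to exhibit, for each point of the stated domain, a convex neighborhood on which $e_n$ admits a complex-differentiable primitive.

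For $n \ge 0$ I would take $F = e_{n+1}/(n+1)$ on all of $\cx$; a direct binomial computation shows $F'(z) = z^n$ at every $z$, and applying Proposition~\ref{prop-equivalent} on the convex set $\cx$ yields $e_n \in \om(\cx)$. For $n \le -2$ the same formula $F = e_{n+1}/(n+1)$ defines a rational function on $\cx \setminus \{0\}$, and an analogous elementary algebraic computation (essentially the difference quotient of $(z+h)^{-k}$, using $(z+h)^{-k}-z^{-k} = -[(z+h)^k - z^k]/(z^k(z+h)^k)$ together with the positive-power difference identity) gives $F'(z) = z^n$ at every $z \ne 0$; covering $\cx \setminus \{0\}$ by convex open disks disjoint from the origin and applying Proposition~\ref{prop-equivalent} on each, the sheaf property then yields $e_n \in \om(\cx \setminus \{0\})$.

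The genuinely delicate case is $n = -1$, since $1/z$ admits no global primitive on $\cx \setminus \{0\}$, so a primitive must be built locally. Fix $z_0 \in \cx \setminus \{0\}$ and let $D$ be the open disk of radius $|z_0|$ about $z_0$; on $D$ we have $|(z-z_0)/z_0| < 1$, and for any fixed $\ell_0 \in \cx$ the power series
\[ L(z) = \ell_0 + \sum_{k=1}^{\infty} \frac{(-1)^{k+1}}{k}\left(\frac{z-z_0}{z_0}\right)^k \]
converges absolutely and uniformly on compact subsets of $D$. The standard termwise-differentiation theorem for complex power series then identifies $L'(z)$ with the geometric series
\[ L'(z) = \frac{1}{z_0}\sum_{k=1}^{\infty} (-1)^{k+1}\left(\frac{z-z_0}{z_0}\right)^{k-1} = \frac{1}{z_0 + (z-z_0)} = \frac{1}{z}, \]
so $L$ is a primitive of $e_{-1}$ on $D$; Proposition~\ref{prop-equivalent} together with the locality of $\om$ then delivers $e_{-1} \in \om(\cx \setminus \{0\})$. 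The main obstacle is precisely the multivaluedness of the logarithm, which blocks a global primitive for $1/z$ and forces this local construction; the other cases reduce to elementary calculus-style verifications with global polynomial or rational primitives.
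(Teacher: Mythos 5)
Your proof is correct and follows essentially the same route as the paper's: exhibit a complex-differentiable primitive and invoke part (B) of Proposition~\ref{prop-equivalent}, working locally (via the sheaf property of $\om$) where no global primitive exists. The only divergence is at $n=-1$: the paper takes the local primitive near $p$ to be a branch of the logarithm, $g_{-1}(z)=\ln\abs{z}+i\arg z$, verified by a direct computation, whereas you build it as the logarithmic power series centered at $z_0$ and differentiate term by term; both are legitimate and Cauchy-free, yours trading the verification that a branch of $\arg$ yields a complex-differentiable function for the elementary termwise-differentiation theorem for power series.
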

\begin{proof} First note that $e_n$ is continuous, on all of $\cx$ if $n\geq0$ and on $\cx\setminus\{0\}$ if
	$n<0$.
	If $g_n(z)= \frac{z^n}{n+1}$ for $n\not=-1$, we can verify from 
	the definition \eqref{eq-compder} that $g_n$ is complex-differentiable and $g_n'=e_n$ so that by part (B) of
	Proposition~\ref{prop-equivalent} the result follows for $n\not=-1$. For $n=-1$, we can construct for each $p\in \cx\setminus\{0\}$, a local primitive of $e_{-1}$ near $p$ by setting
	\[ g_{-1}(z)=\ln\abs{z}+ i \arg z,\]
	where $\arg$ denotes a branch of the argument defined near the point $p$. A direct computation shows that $g_{-1}'=e_{-1}$
	near $p$, so that again we see that $e_{-1}\in \om(\cx\setminus\{0\}).$\end{proof}
\subsection{Products of Morera-holomorphic functions} In the proof of Theorem~\ref{thm-distn}, an important role is played by the fact  that if $U$ is a holomorphic
distribution  (in the sense of \eqref{eq-holdistn})and $f$ is a holomorphic function (i.e. a holomorphic distribution which  is $\mathcal{C}^\infty$, Section~\ref{sec-holfn} above), then the product distribution $fU$  is also a holomorphic distribution. This is an immediate 
consequence of the distributional Lebniz formula \eqref{eq-leibniz}. A similar result,
proved in \cite{mac-wil},
will be needed in order to develop the properties of holomorphic functions
starting from Morera's definition. 
\begin{prop}
	\label{prop-gengoursat}
	Let $f,g\in \om(\Omega)$, and assume that $g$ is locally Lipschitz at each point, i.e. for each $w\in \Omega$ and 
	each compact $K\subset \Omega$ such that $w\in K$, there is an
	$M>0$ such that for $z\in K$ we have
	\begin{equation}\label{eq-mw}
		\abs{g(z)-g(w)}\leq M\abs{z-w}.  
	\end{equation}
	Then the product $fg$ also belongs to $\om(\Omega)$.
\end{prop}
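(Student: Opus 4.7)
My plan is to mimic the classical Cauchy–Goursat argument by barycentric subdivision, but with a linearization of the product $fg$ adapted to our hypothesis. Fix a triangle $T_0\subset\Omega$ (with interior) and set $I(T)=\int_{\partial T}f(z)g(z)\,dz$. Subdivide $T_0$ into four similar triangles via the midpoints of its sides; orienting each counterclockwise, the inner edges cancel so $I(T_0)=\sum_{i=1}^4 I(T_0^{(i)})$, and by the triangle inequality at least one sub-triangle $T_1$ satisfies $|I(T_1)|\geq\tfrac14|I(T_0)|$. Iterating produces a nested sequence $T_0\supset T_1\supset T_2\supset\cdots$ with diameters $d_n=2^{-n}d_0$ and perimeters $L_n=2^{-n}L_0$, shrinking to a single point $w\in T_0\subset\Omega$, and
\begin{equation*}
|I(T_0)|\leq 4^n|I(T_n)|\quad\text{for every }n.
\end{equation*}

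Next I would linearize the integrand about $w$ by writing
\begin{equation*}
f(z)g(z)=f(w)g(z)+\bigl(f(z)-f(w)\bigr)g(w)+\bigl(f(z)-f(w)\bigr)\bigl(g(z)-g(w)\bigr).
\end{equation*}
Integrating the first term over $\partial T_n$ gives $f(w)\int_{\partial T_n}g\,dz=0$ since $g\in\om(\Omega)$. For the second term, using that $f\in\om(\Omega)$ together with $e_0\in\om(\cx)$ (Proposition~\ref{prop-en}) yields $\int_{\partial T_n}(f(z)-f(w))\,dz=0$. So only the cross term survives, and I have to estimate
\begin{equation*}
|I(T_n)|=\Bigl|\int_{\partial T_n}\bigl(f(z)-f(w)\bigr)\bigl(g(z)-g(w)\bigr)\,dz\Bigr|.
\end{equation*}

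Here is where the two hypotheses feed in together. Given $\varepsilon>0$, by continuity of $f$ at $w$ we can choose $n$ so large that $|f(z)-f(w)|<\varepsilon$ for all $z\in T_n$; simultaneously, applying \eqref{eq-mw} to the compact set $K=T_0$ (which contains $w$ and every $T_n$) supplies a constant $M>0$ such that $|g(z)-g(w)|\leq M|z-w|\leq M\,d_n$ for all $z\in T_n$. Combining these bounds with $|\partial T_n|=L_n$,
\begin{equation*}
|I(T_n)|\leq \varepsilon\cdot M d_n\cdot L_n=\varepsilon M d_0 L_0\cdot 4^{-n}.
\end{equation*}
Substituting into $|I(T_0)|\leq 4^n|I(T_n)|$ gives $|I(T_0)|\leq\varepsilon M d_0 L_0$, and letting $\varepsilon\to 0$ forces $I(T_0)=0$. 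Since $T_0\subset\Omega$ was arbitrary, $fg\in\om(\Omega)$.

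The substantive step is the algebraic splitting above: the standard Goursat proof for a complex-differentiable function uses the linear Taylor remainder $f(z)-f(w)-f'(w)(z-w)$, which we cannot write without a derivative. Replacing it with the symmetric decomposition works precisely because each of the ``corrector'' integrals vanishes on its own (using only that $f,g\in\om$ and that constants are in $\om$), leaving a cross term whose smallness is controlled not by differentiability of one factor but by continuity of $f$ against Lipschitz control of $g$; the quadratic factor $d_n\cdot L_n=O(4^{-n})$ is exactly what compensates the $4^n$ amplification from subdivision. The rest is bookkeeping.
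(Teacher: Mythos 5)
Your proof is correct and follows essentially the same route as the paper's: reduce $\int_{\partial T_n}fg\,dz$ to the cross term $\int_{\partial T_n}(f(z)-f(w))(g(z)-g(w))\,dz$ using Morera-holomorphicity of $f$ and $g$ (and of constants), then beat the $4^n$ amplification from Goursat subdivision with the bound $\varepsilon\cdot M\,d_n\cdot L_n=O(4^{-n})$ coming from continuity of $f$ and the Lipschitz hypothesis on $g$. The only cosmetic difference is that the paper packages the subdivision argument as a separate abstract lemma (Lemma~\ref{lem-goursat}) about additive triangle functions with vanishing areolar density, whereas you inline it.
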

The proof is based on a version of  the classical Goursat lemma (\cite{goursat, pringsheim}).
This is of course  the
main ingredient in the standard textbook proof of the Cauchy theorem for triangles for 
Goursat-holomorphic functions.  Recall that two triangles are \emph{similar} if they have 
the same angles. 
\begin{lem}\label{lem-goursat}
	Let $\Omega$ be an open subset of $\cx$ and let $\lambda$ be a complex valued function
	defined on the set of triangles contained in $\Omega$ such that  the following two conditions are satisfied:
	\begin{enumerate}
		\item $\lambda$ is additive in the following sense: if a triangle $\Delta$ is represented as 
		a union of smaller triangles $\Delta=\bigcup_{k=1}^n \Delta_k$  with pairwise disjoint interiors
		then 
		\begin{equation}
			\label{eq-additive}
			\lambda(\Delta)= \sum_{k=1}^n \lambda(\Delta_k).
		\end{equation}
		\item  For each $w\in \Omega$ and each triangle $\Delta_0$, we have
		\begin{equation}\label{eq-areolar1}
		\lim_{\substack{\Delta\downarrow w\\ \Delta\sim \Delta_0 }} \frac{\lambda(\Delta)}{\abs{\Delta}}=0,
		\end{equation}
		where $\abs{\Delta}$ denotes the area of the triangle $\Delta$, and the limit is taken along the
		family of triangles  similar to $\Delta_0$ and containing the point $w$, as these
		triangles shrink to the point $w$. 
	\end{enumerate}
Then $\lambda \equiv 0$. 
\end{lem}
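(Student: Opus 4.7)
The plan is to argue by contradiction via the classical bisection argument of Goursat. Suppose there exists a triangle $\Delta_0 \subset \Omega$ with $\lambda(\Delta_0) \neq 0$. We will construct a nested sequence of triangles $\Delta_0 \supset \Delta_1 \supset \Delta_2 \supset \cdots$, each similar to $\Delta_0$, such that the area $|\Delta_n|$ shrinks like $4^{-n}\,|\Delta_0|$ while $|\lambda(\Delta_n)|$ decays no faster than $4^{-n}\,|\lambda(\Delta_0)|$; the contradiction with hypothesis (2) is then immediate.

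The key geometric step is midpoint subdivision: joining the midpoints of the sides of $\Delta_n$ partitions it into four closed triangles $\Delta_n^{(1)},\dots,\Delta_n^{(4)}$ with pairwise disjoint interiors, each of area $\tfrac14 |\Delta_n|$ and each similar to $\Delta_n$ (the three ``corner'' triangles are translates of a scaling of $\Delta_n$, while the ``central'' triangle is similar by a reflection — all four have the same angles as $\Delta_0$). By the additivity hypothesis (1),
\[
\lambda(\Delta_n) = \sum_{k=1}^4 \lambda(\Delta_n^{(k)}),
\]
so the pigeonhole principle forces at least one $k$ with $|\lambda(\Delta_n^{(k)})| \geq \tfrac14 |\lambda(\Delta_n)|$; define $\Delta_{n+1}$ to be this subtriangle. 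Inductively, $|\Delta_n| = 4^{-n}|\Delta_0|$, each $\Delta_n$ is similar to $\Delta_0$, and $|\lambda(\Delta_n)| \geq 4^{-n}|\lambda(\Delta_0)|$.

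Since $\{\Delta_n\}$ is a decreasing sequence of nonempty compact sets with diameters tending to zero, their intersection consists of a single point $w$; as $w \in \Delta_0 \subset \Omega$, hypothesis (2) applies with reference triangle $\Delta_0$. Dividing the two displayed estimates,
\[
\frac{|\lambda(\Delta_n)|}{|\Delta_n|} \geq \frac{|\lambda(\Delta_0)|}{|\Delta_0|} > 0
\]
for every $n$, while the $\Delta_n$ are similar to $\Delta_0$ and shrink to $w$; this contradicts \eqref{eq-areolar1}. Therefore $\lambda(\Delta_0) = 0$, and since $\Delta_0$ was arbitrary the conclusion follows.

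The only mild subtlety, and what I expect to be the pedantic point to check carefully, is verifying that all four pieces in the midpoint subdivision are genuinely similar to the parent (so that hypothesis (2), which is stated only along triangles similar to the reference $\Delta_0$, can actually be invoked at the limit point $w$). Beyond this, the argument is purely combinatorial and uses no analytic input aside from the two stated hypotheses.
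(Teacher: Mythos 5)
Your proof is correct and is essentially the paper's own argument: the same midpoint subdivision into four similar quarter-triangles, the same pigeonhole selection of a subtriangle carrying at least a quarter of $\abs{\lambda}$, and the same compactness argument producing the limit point $w$ at which hypothesis (2) is applied. The only cosmetic difference is that you phrase it as a contradiction while the paper runs the estimate $\abs{\lambda(T)}\leq \abs{T}\cdot\abs{\lambda(T_k)}/\abs{T_k}$ directly to conclude $\lambda(T)=0$.
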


\begin{proof}	For completeness, we recall the classic argument. Let $T$ be a triangle 
	contained in $\Omega$. We construct a sequence of triangles $\{T_k\}_{k=0}^\infty$  with $T_0=T$ using the following recursive procedure. 	Assuming that $T_k$ has been constructed, we divide $T_k$ into four similar triangles with half the diameter of 
	$T_k$ by three line segments each parallel to a side of $T_k$ and passing through the midpoints of the other two sides.  Denote the four triangles so obtained by $\Delta_j, 1\leq j \leq 4$. Then, by \eqref{eq-additive}, we have
	\[\lambda(T_k)=  \sum_{j=1}^4 \lambda(\Delta_j).\]
	Choose $T_{k+1}$ to be one of $\Delta_j, 1\leq j \leq 4$ such that the value of $\abs{\lambda(T_{k+1})}$ is 
	the largest.  Then, by the triangle inequality we have 
	$ \abs{\lambda(T_k)}\leq 4\abs{\lambda(T_{k+1})}, $ and by induction it follows that 
	\begin{equation}
		\label{eq-lambdat}
	\abs{\lambda(T)}\leq 4^k\abs{\lambda(T_{k})}= 4^k \abs{T_k}\cdot \frac{\abs{\lambda(T_{k})}}{\abs{T_k}}= \abs{T} \cdot\frac{\abs{\lambda(T_{k})}}{\abs{T_k}}, \end{equation}
	where in the last step we have used the fact that $T_{k+1}$ has one-fourth the area of  $T_k$, so $\abs{T_k}=4^{-k}\abs{T_0}= 4^{-k}\abs{T}$. Since the diameters of the $T_k$ 
	go to zero, by compactness,  there is a unique point $w$ in the intersection $\bigcap_{k=0}^\infty T_k$. Since the family $\{T_k\}$ is a subfamily of all the triangles containing $w$, and each $T_k$ is similar to $T_0$, therefore by letting $k\to \infty$ in \eqref{eq-lambdat} and using \eqref{eq-areolar1}
	the result follows. 
  \end{proof}

\begin{proof}[Proof of Proposition~\ref{prop-gengoursat}]
	For a triangle $\Delta\subset \Omega$, define 
	\[ \lambda(\Delta)= \int_{\partial \Delta} f(z)g(z) dz.\]
	To prove the result, we need to show that $\lambda\equiv0$.  Since condition \eqref{eq-additive} of Lemma~\ref{lem-goursat} is obvious, we need to show  \eqref{eq-areolar1} to complete the proof. Let $w\in \Omega$,   let $K$ be a compact neighborhood of $w$ in $\Omega$, and denote by $M$ the Lipschitz constant corresponding
	to this $w$ and this $K$ in \eqref{eq-mw}. Now,
	let $\Delta_0$ be a triangle 
	and let $\Delta$ be a triangle similar to $\Delta_0$ such that $w\in \Delta \subset K$. Then 
	observe that, by the hypothesis of Morera-holomorphicity of $f$ and $g$ we have
	\[ \lambda(\Delta)= \int_{\partial\Delta} \left(f(z)-f(w)\right)\left(g(z)-g(w)\right)dz.\]
	Therefore, denoting the perimeter of the triangle $\Delta$ by $\abs{\partial\Delta}$,
	\begin{align*}
\abs{\lambda(\Delta)}&\leq \sup_{z\in \partial \Delta}\abs{\left(f(z)-f(w)\right)\left(g(z)-g(w)\right)} \abs{\partial \Delta}\\
& \leq M \cdot  \abs{\partial \Delta}\cdot\sup_{z\in \partial \Delta}\left(\abs{f(z)-f(w)} \abs{z-w}\right)\\
&\leq  M \cdot  \abs{\partial \Delta}\cdot \mathrm{diam}(\Delta)\cdot\sup_{z\in \partial \Delta}\abs{f(z)-f(w)} \\
&= M\cdot\abs{\Delta} \cdot \left( \frac{ \abs{\partial \Delta}\cdot \mathrm{diam}(\Delta)}{\abs{\Delta}}\right)\cdot\sup_{z\in \partial \Delta}\abs{f(z)-f(w)} \\
&= M\cdot\abs{\Delta} \cdot \left( \frac{ \abs{\partial \Delta_0}\cdot \mathrm{diam}(\Delta_0)}{\abs{\Delta_0}}\right)\cdot\sup_{z\in \partial \Delta}\abs{f(z)-f(w)},
	\end{align*}
	where in the last step we use the fact that $\Delta$ and $\Delta_0$ are similar, so the quantity in parentheses (which is clearly invariant under dilations) is the same. So we have
	
\[ \frac{\abs{\lambda(\Delta)}}{\abs{\Delta}}\leq C\cdot\sup_{z\in \partial \Delta}\abs{f(z)-f(w)},\]
for a constant $C$ independent of the triangle $\Delta$ as long as $\Delta$ is similar to $\Delta_0$ and $w\in \Delta \subset K$. Letting $\Delta$ shrink to $w$, we have \eqref{eq-areolar1} and the proof is complete.
\end{proof}

\subsection{Fourier expansion of a Morera-holomorphic function}
We will now prove the following analog of Theorem~\ref{thm-distn}. In particular, it shows that holomorphic functions in the sense of Morera are identical to the holomorphic distributions considered in Section~\ref{sec-distn}.
\begin{thm}\label{thm-morera}
	Let $f\in \om(\D)$ where $\D=\{\abs{z}<1\}$ is the open unit disc. Then there is a sequence $\{a_n\}_{n=0}^\infty$ 
	of complex numbers, such that 
	\begin{equation}
		\label{eq-taylor2}f= \sum_{n=0}^\infty a_n e_n,
	\end{equation}
	where $e_n$ is as in \eqref{eq-ealpha}, and  the series on the right
	converges absolutely in   $\mathcal{C}(\D)$ to the function $f$.
\end{thm}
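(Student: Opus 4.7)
The plan is to deploy the abstract Fej\'er machinery of Section~\ref{sec-abstract} directly on $\om(\D)$, in close analogy with the distributional proof of Proposition~\ref{prop-annular}. First I would observe that $\mathcal{C}(\D)$ is Fr\'echet, that $\om(\D)$ is a closed subspace (a uniform-on-compacta limit of Morera-holomorphic functions is Morera-holomorphic, since line integrals depend continuously on the integrand), and hence $\om(\D)$ is itself a complete LCTVS. The rotation representation $\sigma_\lambda f=f\circ \rot_\lambda$ is continuous on $\mathcal{C}(\D)$ by uniform continuity of $f$ on compacta, and the change of variable $w=\lambda z$ in \eqref{eq-cauchygoursat} shows $\sigma_\lambda$ carries $\om(\D)$ to itself. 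Since the seminorms $p_{\overline{B(0,\rho)}}$ ($0<\rho<1$) are $\sigma$-invariant and generate the Fr\'echet topology, Proposition~\ref{prop-continuity} makes $\sigma$ a continuous representation on $\om(\D)$.

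I then form the Fourier components $f_n=\ppi^\sigma_n(f)\in \om(\D)$, which by Proposition~\ref{prop-fourierproperties}(1) satisfy $f_n(\lambda z)=\lambda^n f_n(z)$ for all $\lambda\in\T$. Mimicking Proposition~\ref{prop-annular}, set $h_n:=e_{-n}\cdot f_n$ on $\D\setminus\{0\}$. By Proposition~\ref{prop-en}, $e_{-n}\in\om(\D\setminus\{0\})$, and it is evidently locally Lipschitz there, so Proposition~\ref{prop-gengoursat} gives $h_n\in\om(\D\setminus\{0\})$; the transformation law for $f_n$ at once yields $h_n(\lambda z)=h_n(z)$, so $h_n$ is radial.

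The key lemma is then: every radial $h\in\om(\D\setminus\{0\})$, written $h(z)=\phi(|z|)$, is constant. To prove it, fix $0<r_1<r_2<1$ and a small $\beta>0$, and consider the annular sector $S$ whose boundary consists of two radial segments $L_1=[r_1,r_2]$, $L_2=[r_2 e^{i\beta},r_1 e^{i\beta}]$ and two circular arcs $A_1, A_2$ of radii $r_1, r_2$. For $\beta$ small, the convex hull of $A_1\cup A_2$, whose inner boundary is the chord $[r_1,r_1 e^{i\beta}]$ at distance $r_1\cos(\beta/2)>0$ from the origin, lies in $\D\setminus\{0\}$; a slight open enlargement produces a convex $\Omega\subset\D\setminus\{0\}$ containing $\partial S$, and Proposition~\ref{prop-equivalent}(C) gives $\int_{\partial S}h(z)\,dz=0$. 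Direct evaluation piece by piece (the two arcs contribute $(e^{i\beta}-1)(r_2\phi(r_2)-r_1\phi(r_1))$, the two radial segments contribute $(1-e^{i\beta})\int_{r_1}^{r_2}\phi(s)\,ds$) yields
\begin{equation*}
r_2\phi(r_2)-r_1\phi(r_1)=\int_{r_1}^{r_2}\phi(s)\,ds.
\end{equation*}
Since $\phi$ is continuous, the right side is $C^1$ in $r_2$ with derivative $\phi(r_2)$, so $\psi(r):=r\phi(r)$ is $C^1$ and solves $\psi'(r)=\phi(r)=\psi(r)/r$; consequently $(\psi/r)'\equiv 0$, making $\phi$ constant on $(0,1)$.

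Applying the lemma, $h_n\equiv a_n$ for some $a_n\in\cx$, so $f_n(z)=a_n z^n$ on $\D\setminus\{0\}$. Continuity of $f_n$ at $0$ forces $a_n=0$ when $n<0$, and yields $f_n=a_n e_n$ on $\D$ for $n\geq 0$. To conclude, the Cauchy inequality \eqref{eq-cauchy} applied to the $\sigma$-invariant seminorm $p_{\overline{B(0,\rho)}}$ gives $|a_n|\rho^n\leq p_{\overline{B(0,\rho)}}(f)$; for any compact $K\subset\D$ one picks $\rho'<\rho<1$ with $K\subset\overline{B(0,\rho')}$, obtaining $p_K(a_n e_n)\leq p_{\overline{B(0,\rho)}}(f)(\rho'/\rho)^n$, which is summable in $n$. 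Thus $\sum_{n\geq 0}a_n e_n$ converges absolutely in $\mathcal{C}(\D)$, and Corollary~\ref{cor-cesaro} identifies its sum with $f$. The main obstacle is precisely the core lemma: in Proposition~\ref{prop-annular} the ``radial plus holomorphic'' condition was converted to ``all partials vanish'' through distributional Leibniz and the polar form of $\bar\partial$, but without distributions one must extract constancy from an integrated Cauchy-type identity, and it is the annular-sector computation above that does this job.
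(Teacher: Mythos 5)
Your proposal is correct and follows essentially the same route as the paper: the rotation representation on $\mathcal{C}(\D)$ restricted to the closed subspace $\om(\D)$, the identification of $\ppi^\sigma_n(f)$ as $a_ne_n$ via Proposition~\ref{prop-gengoursat} and the key lemma that radial Morera-holomorphic functions on $\D\setminus\{0\}$ are constant (proved, as in the paper, by integrating over an annular sector to obtain $r_2\phi(r_2)-r_1\phi(r_1)=\int_{r_1}^{r_2}\phi$ and differentiating), followed by the Cauchy inequalities and Corollary~\ref{cor-cesaro}. The only differences are cosmetic (your sector spans angles $[0,\beta]$ with a convex-hull justification, the paper's spans $[\alpha,\beta]\subset[0,\pi)$ inside the upper half disc), and your handling of the convexity needed for Proposition~\ref{prop-equivalent}(C) is if anything slightly more careful than the paper's.
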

	Let $\sigma$ be  the natural representation of $\T$ on $\mathcal{C}(\D)$ given by 
	\begin{equation}\label{eq-sigmacd}
		 \sigma_\lambda(f)(z)=f(\lambda z), \quad \lambda\in \T, z\in \D.
	\end{equation}

\begin{prop}\label{prop-rep}
	The space $\om(\D)$ is invariant under $\sigma$ and the resulting representation of
	$\T$ on $\om(\D)$ is continuous.
\end{prop}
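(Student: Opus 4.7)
The plan is to handle invariance of $\om(\D)$ by a direct change of variables in the Morera integral, and then verify the two hypotheses of Proposition~\ref{prop-continuity} for the restricted representation.

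For invariance, let $f\in \om(\D)$ and $\lambda\in \T$. Given a triangle $\Delta\subset \D$, the rotated set $\lambda\Delta=\rot_\lambda(\Delta)$ is again a triangle in $\D$ (since $\D$ is rotation invariant). Parametrizing $\partial \Delta$ by $z(t)$ and setting $w=\lambda z$, we obtain
\[
\int_{\partial\Delta} (\sigma_\lambda f)(z)\,dz = \int_{\partial\Delta} f(\lambda z)\,dz = \lambda^{-1}\int_{\partial(\lambda\Delta)} f(w)\,dw = 0,
\]
where the last equality uses Morera-holomorphicity of $f$ on $\D$ applied to the triangle $\lambda\Delta$. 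Hence $\sigma_\lambda f\in \om(\D)$.

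To apply Proposition~\ref{prop-continuity}, recall that $\om(\D)$ carries the subspace topology from $\mathcal{C}(\D)$, which is generated by the seminorms $p_K(g)=\sup_K\abs{g}$ as $K$ ranges over compact subsets of $\D$. For each such $K$, set
\[
\widetilde{K}=\{\lambda z:\lambda\in \T,\, z\in K\}=\bigcup_{\lambda\in\T}\rot_\lambda(K),
\]
which is compact (as a continuous image of the compact set $\T\times K$) and contained in $\D$ (since $\D$ is Reinhardt). The seminorm $p_{\widetilde K}$ is $\sigma$-invariant, because $\rot_\mu(\widetilde K)=\widetilde K$ for every $\mu\in\T$, and clearly $p_K\leq p_{\widetilde K}$, so the family $\{p_{\widetilde K}\}$ still generates the topology of $\om(\D)$. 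This verifies hypothesis (a) of Proposition~\ref{prop-continuity}.

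For hypothesis (b), fix $f\in \om(\D)$ and a compact $K\subset \D$. Then $\widetilde{K}$ is compact in $\D$ and $f$ is uniformly continuous on $\widetilde{K}$. Given $\epsilon>0$, choose $\delta>0$ with $\abs{f(w)-f(w')}<\epsilon$ whenever $w,w'\in \widetilde K$ and $\abs{w-w'}<\delta$. Let $R=\sup_{z\in K}\abs{z}$ (finite, since $K$ is compact), and suppose $\lambda\in\T$ satisfies $\abs{\lambda-1}<\delta/(R+1)$. Then for $z\in K$ the points $z$ and $\lambda z$ both lie in $\widetilde K$ and $\abs{\lambda z-z}=\abs{z}\abs{\lambda-1}<\delta$, so
\[
p_K(\sigma_\lambda f-f)=\sup_{z\in K}\abs{f(\lambda z)-f(z)}<\epsilon.
\]
Thus $\sigma_\lambda f\to f$ in $\mathcal{C}(\D)$ as $\lambda\to 1$ in $\T$, proving (b). Proposition~\ref{prop-continuity} then yields continuity of the representation. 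The only step that requires any care is the construction of the invariant seminorms via the rotation-saturation $\widetilde K$, and there is no serious obstacle; everything else is routine.
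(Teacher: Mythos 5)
Your proposal is correct and follows essentially the same route as the paper: invariance by the change of variables $w=\lambda z$ in the Morera integral, then verification of conditions (a) and (b) of Proposition~\ref{prop-continuity} via rotation-invariant sup-seminorms and uniform continuity on compacts. The only cosmetic difference is that the paper works directly with the already-invariant seminorms $p_r(f)=\sup_{\abs{z}\leq r}\abs{f(z)}$ on closed disks, whereas you saturate an arbitrary compact $K$ under rotation; both yield the same generating family of invariant seminorms.
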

\begin{proof}
	Let $\lambda\in \T$, let $T$ be a triangle in $\D$. Notice that
	\[ \lambda^{-1}T=\{ \lambda^{-1}z:z\in T\}\]
	is itself a triangle, 	and we have 
	\[ \int_{\partial T}(\sigma_\lambda f)(z)dz =\int_{\partial T} f(\lambda z)dz =\lambda^{-1}\int_{\partial (\lambda^{-1}T)}f(w)dw=0.\]
	It follows that $\om(\D)$ is invariant under $\sigma$.

	It suffices to show that the representation $\sigma$ is continuous on $\mathcal{C}(\D)$.
	For $0<r<1$, let $p_r$ be the seminorm on $\mathcal{C}(\D)$ given by 
		\begin{equation}\label{eq-pr}
		p_r(f)= \sup_{\abs{z}\leq r} \abs{f(z)}.
	\end{equation}
	It is clear that $p_r(\sigma_\lambda(f))= p_r(f)$ for each $0<r<1, \lambda\in\T $ and
	 $f\in \mathcal{C}(\D)$. Also $f_j\to f$ in $\mathcal{C}(\D)$ if and only if 	for each $r$, we have $p_r(f-f_j)\to 0$, so the family $\{p_r\}$ is a $\sigma$-invariant family of seminorms that 
	 generates the topology of $\mathcal{C}(\D)$. Further, given $f\in \mathcal{C}(\D)$, by uniform continuity,
	  for each $0<r<1$, we have
	  \[ p_r(\sigma_\lambda(f)-f)= \sup_{\abs{z}\leq r} \abs{f(\lambda z)-f(z)}\to 0\quad \text{ as } \lambda\to 1,\]
	  so that $\lim_{\lambda\to 1} \sigma_\lambda(f)=f$ in the space $\mathcal{C}(\D)$. Therefore 
	  both conditions of Proposition~\ref{prop-continuity} are satisfied, and the representation 
	 $\sigma$ is continuous.  
\end{proof}

In view of the above, the machinery of Section~\ref{sec-abstract} applies. We now compute the 
Fourier components \eqref{eq-fouriermode}.
\begin{prop}
	\label{prop-fourier} For $f\in \om(\D)$, and with $\sigma$ the natural representation \eqref{eq-sigmacd}, the Fourier components of $f$ are of the form:
	\[ \ppi^\sigma_n(f)= \begin{cases}a_ne_n&\text{ if } n\geq 0\\
		0 & \text{ if } n<0,
	\end{cases}
\]
	where $a_n\in \cx$ and $e_n(z)=z^n$.
\end{prop}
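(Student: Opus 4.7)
The plan is to apply Proposition~\ref{prop-fourierproperties}, noting that $\om(\D)$ is a closed $\sigma$-invariant subspace of the complete LCTVS $\mathcal{C}(\D)$ on which $\sigma$ restricts to a continuous representation (Proposition~\ref{prop-rep}). This yields that $g := \ppi^\sigma_n(f)$ lies in $\om(\D)$ and in the $n$-th Fourier mode, i.e.\ $g(\mu z) = \mu^n g(z)$ for all $\mu \in \T$, $z \in \D$. It then suffices to show that any such $g \in \om(\D)$ equals $a_n e_n$ for some $a_n \in \cx$ when $n \geq 0$, and vanishes identically when $n < 0$. The crucial step, which I treat as the main point, is the following lemma: \emph{every continuous $\T$-invariant function in $\om(\D \setminus \{0\})$ is constant.} Granting the lemma, the two cases are handled symmetrically. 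For $n \geq 0$, the monomial $e_{-n}$ lies in $\om(\D \setminus \{0\})$ by Proposition~\ref{prop-en} and is locally Lipschitz there, so by Proposition~\ref{prop-gengoursat} the product $h := g\cdot e_{-n}$ lies in $\om(\D\setminus\{0\})$; the transformation law makes $h$ $\T$-invariant, the lemma forces $h \equiv a_n$, and hence $g = a_n e_n$ on $\D \setminus \{0\}$, extending by continuity to $\D$. For $n < 0$, the monomial $e_{|n|}$ lies in $\om(\cx)$ and is locally Lipschitz, so $h := g \cdot e_{|n|} \in \om(\D)$ is again $\T$-invariant; the lemma forces $h$ constant on $\D \setminus \{0\}$, and since $h(0) = 0$ this constant is zero, giving $g \equiv 0$.

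To prove the lemma, write $h(z) = H(|z|)$ for continuous $H : (0,1) \to \cx$. Fix $A \in (0, 1)$ and $B_{\max} > 0$ with $\sqrt{A^2 + B_{\max}^2} < 1$, and choose $\epsilon > 0$ small so that the convex open rectangle
\[ R := \bigl\{z \in \cx : A - \epsilon < \Re z < \sqrt{A^2+B_{\max}^2} + \epsilon,\ |\Im z| < B_{\max} + \epsilon\bigr\} \]
lies in $\D \setminus \{0\}$. For each $B \in (0, B_{\max})$, $R$ contains both the vertical chord $[A - iB, A + iB]$ and the arc of $|z| = \sqrt{A^2+B^2}$ from $A - iB$ to $A+iB$ passing through the positive real point $\sqrt{A^2+B^2}$. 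By the equivalence (A)$\Leftrightarrow$(C) of Proposition~\ref{prop-equivalent}, the integrals of $h$ along these two paths agree. On the chord, parametrising $z = A + is$, the integrand $H(\sqrt{A^2+s^2})$ is even in $s$, so the integral equals $2i\int_0^B H(\sqrt{A^2+s^2})\,ds$; on the arc, $h$ takes the constant value $H(\sqrt{A^2+B^2})$ and the integral equals $2iB\cdot H(\sqrt{A^2+B^2})$. Cancelling $2i$ yields
\[ \int_0^B H\bigl(\sqrt{A^2+s^2}\bigr)\,ds \ =\ B\, H\bigl(\sqrt{A^2+B^2}\bigr), \qquad 0 < B < B_{\max}. \]
The left-hand side $G(B)$ is $C^1$ with $G'(B) = H(\sqrt{A^2+B^2})$, and the identity $G(B)/B = H(\sqrt{A^2+B^2})$ together with $(G/B)' = (BG' - G)/B^2 = 0$ forces $H(\sqrt{A^2+B^2})$ to be independent of $B$; equivalently, $H$ is constant on $(A, \sqrt{A^2+B_{\max}^2})$. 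For any $r_0 \in (0, 1)$ the parameters $A$ and $B_{\max}$ can be chosen so that this interval is an open neighbourhood of $r_0$, so $H$ is locally constant on $(0,1)$, hence constant by connectedness.

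The main obstacle is the lemma itself, which demands the constancy of a radially symmetric Morera-holomorphic function without recourse to the Cauchy integral formula or to any consequence such as $C^\infty$-smoothness of Goursat-holomorphic functions; both would be circular in the context of this section. The only classical tool available is the local existence of primitives on convex sets (Proposition~\ref{prop-equivalent}), and the argument extracts mileage from $\T$-invariance by the device of comparing a straight chord (along which $|z|$ varies) with a concentric circular arc joining the same endpoints (along which $|z|$ is constant), thereby converting the Morera condition into a one-variable integral equation whose only continuous solutions are constants. Once the lemma is in hand, the remainder of the proof depends only on the product rule of Proposition~\ref{prop-gengoursat} and the explicit Morera-holomorphicity of the monomials from Proposition~\ref{prop-en}.
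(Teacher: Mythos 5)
Your proof is correct, and its skeleton is exactly the paper's: identify $\ppi^\sigma_n(f)$ as an element of the $n$-th Fourier mode of $\om(\D)$, multiply by $e_{-n}$ via Proposition~\ref{prop-gengoursat} to produce a radial element of $\om(\D\setminus\{0\})$, and invoke a constancy lemma for radial Morera-holomorphic functions (the paper's Lemma~\ref{lem-radial2}), with continuity at the origin killing the negative modes. The genuine difference is in how you prove that lemma. The paper integrates around an annular sector $\{\rho\leq r\leq R,\ \alpha\leq\theta\leq\beta\}$ sitting inside a convex half-disc, obtaining the identity $\int_\rho^R u(r)\,dr = Ru(R)-\rho u(\rho)$, then bootstraps $u$ to $\mathcal{C}^1$ and differentiates. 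You instead compare a vertical chord with the concentric circular arc joining its endpoints inside a convex rectangle, obtaining $\int_0^B H(\sqrt{A^2+s^2})\,ds = B\,H(\sqrt{A^2+B^2})$ and running the same bootstrap. Both contours exploit the same two ingredients (radiality reduces the arc integral to an endpoint difference, and Proposition~\ref{prop-equivalent}(C) on a convex set makes the closed-path integral vanish), and both yield local constancy followed by connectedness; neither is more general or shorter than the other. One small slip in your setup: the condition $\sqrt{A^2+B_{\max}^2}<1$ does not by itself guarantee that the rectangle $R$ can be squeezed into $\D$, since its far corner has modulus at least $\sqrt{A^2+2B_{\max}^2}$ (for instance $A=0.1$, $B_{\max}=0.9$ fails even with $\epsilon=0$). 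Because your final step only needs $B_{\max}$ small enough that $(A,\sqrt{A^2+B_{\max}^2})$ is a neighbourhood of a prescribed $r_0$, this is harmless: just add the requirement that the closed box $[A,\sqrt{A^2+B_{\max}^2}]\times[-B_{\max},B_{\max}]$ lie in $\D\setminus\{0\}$ before choosing $\epsilon$, which costs nothing.
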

The proof will use the following lemma:
\begin{lem}\label{lem-radial2} A radial function in $\om(\D\setminus \{0\})$ is constant. 
\end{lem}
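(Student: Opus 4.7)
The plan is to reduce the lemma to a contour-deformation argument that uses the Morera-holomorphicity tools of Propositions~\ref{prop-en} and \ref{prop-gengoursat} in place of the distributional machinery invoked for the analogous statement in the proof of Proposition~\ref{prop-annular}. First, radiality yields a continuous function $g:(0,1)\to\cx$ with $f(z)=g(\abs{z})$, and parametrizing the circle $C_r=\{\abs{z}=r\}$ by $z=re^{i\theta}$ gives
\[
\int_{C_r}\frac{f(z)}{z}\,dz=\int_0^{2\pi}\frac{g(r)}{re^{i\theta}}\cdot ire^{i\theta}\,d\theta=2\pi i\,g(r).
\]
Hence the lemma reduces to showing that this integral is independent of $r\in(0,1)$.

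Since $e_{-1}\in\om(\cx\setminus\{0\})$ by Proposition~\ref{prop-en} and $e_{-1}$ is visibly locally Lipschitz on $\cx\setminus\{0\}$, Proposition~\ref{prop-gengoursat} applied to the product $f\cdot e_{-1}$ yields $h:=f\cdot e_{-1}\in\om(\D\setminus\{0\})$. For $0<r_1<r_2<1$ and $n\geq 3$, I would inscribe regular $n$-gons $P_n^{(1)},P_n^{(2)}$ into $C_{r_1},C_{r_2}$ with vertices placed at the common angles $2\pi k/n$, and join corresponding vertices by radial segments. The resulting closed polygonal annulus is a union (with pairwise disjoint interiors) of $n$ congruent trapezoids contained in $\D\setminus\{0\}$; bisecting each trapezoid along a diagonal produces $2n$ triangles $T_1,\dots,T_{2n}$, each contained in $\D\setminus\{0\}$ by a direct geometric check (using $r_2<1$ and $r_1\cos(\pi/n)>0$). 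Applying the Morera identity $\int_{\partial T_j} h\,dz=0$ to each triangle and summing, the interior edges cancel pairwise, leaving
\[
\int_{\partial P_n^{(2)}} h\,dz = \int_{\partial P_n^{(1)}} h\,dz,
\]
with both polygons oriented counterclockwise.

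To finish, I would let $n\to\infty$. Since $h$ is uniformly continuous on the compact annulus $\{r_1\leq\abs{z}\leq r_2\}$ and the inscribed polygonal contours $\partial P_n^{(i)}$ converge to $C_{r_i}$ uniformly in parametrization with arclengths tending to $2\pi r_i$, a standard estimate gives $\int_{\partial P_n^{(i)}} h\,dz\to \int_{C_{r_i}} h\,dz$. Combined with the previous step, this yields $2\pi i\,g(r_1)=2\pi i\,g(r_2)$, so $g$ is constant and hence $f$ is constant on $\D\setminus\{0\}$. The step requiring the most care is this final passage to the limit: one must bound the difference between the polygon and the circle integrals by the product of the arclength and the modulus of continuity of $h$ at scale $O(1/n)$. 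The triangulation step itself is routine once the geometric picture is in place.
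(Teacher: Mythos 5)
Your proof is correct, but it follows a genuinely different route from the paper's. The paper never integrates over full circles: it fixes angles $0\leq\alpha<\beta<\pi$ and a radius $\rho$, and integrates $f$ itself over the boundary of the annular sector $S(R)=\{re^{i\theta}:\rho\leq r\leq R,\ \alpha\leq\theta\leq\beta\}$, which sits inside the convex upper half-disc, so that Proposition~\ref{prop-equivalent}(C) applies directly to this piecewise $\mathcal{C}^1$ closed path. The four boundary integrals collapse to the scalar identity $\int_\rho^R u(r)\,dr=Ru(R)-\rho u(\rho)$ for $u(r)=f(r)$, from which $u\in\mathcal{C}^1$ by bootstrapping and $u'\equiv 0$ by differentiation. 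Your argument instead integrates $h=f\cdot e_{-1}$ over the circles $C_r$ and proves $r$-independence of $\int_{C_r}h\,dz=2\pi i\,g(r)$ by polygonal approximation and triangulation of the region between two inscribed $n$-gons, followed by a limit in $n$. This is forced on you because the full circle lies in no convex subset of $\D\setminus\{0\}$, so Proposition~\ref{prop-equivalent}(C) cannot be invoked for it directly; your triangulation and uniform-continuity limit correctly substitute for that, and your appeal to Proposition~\ref{prop-gengoursat} (with $e_{-1}$ locally Lipschitz on $\D\setminus\{0\}$) to get $h\in\om(\D\setminus\{0\})$ is legitimate. What each approach buys: the paper's is shorter and more elementary, needing neither the product lemma nor any limiting approximation of contours, and it cleverly exploits that the sector identity is independent of $\alpha,\beta$; yours stays closer to the classical ``circle means'' picture and, as a bonus, establishes the contour-deformation principle $\int_{C_{r_1}}h=\int_{C_{r_2}}h$ for arbitrary Morera-holomorphic $h$ on the punctured disc, which is of independent use. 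Both are complete proofs; just note that the paper defers Proposition~\ref{prop-gengoursat} to the later computation of the Fourier components (Proposition~\ref{prop-fourier}), whereas you need it already here.
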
 
\begin{proof} Let  $f\in \om(\D\setminus \{0\})$ be radial, and 
	define the  complex valued continuous function 
	$u$ on the interval $(0,1)$ by restriction, $u(r)=f(r)$, so that we have
	$f(z)=u(\abs{z})$ by the radiality of $f$. To prove the theorem, it suffices to show that  $u$ is a constant  .

	Fix $0\leq \alpha<\beta <\pi$ and $0<\rho<1$. For $R$ in the interval $(\rho,1)$ consider the 
	curvilinear quadrilateral defined by 
	\begin{equation}
		\label{eq-S}S(R)=\{re^{i\theta}: \rho\leq r \leq R, \alpha\leq \theta \leq \beta\},
	\end{equation}
	and notice that $S(R)$ lies in the upper half disc, which is convex.
	The region $S(R)$ is  bounded by the two circular arcs
	\[ AB= \{\rho e^{i\theta} : \alpha \leq \theta\leq \beta \},\quad
	CD= \{Re^{i\theta} : \alpha \leq \theta\leq \beta \},\]
	along with the two radial line segments
	\[ AD=\{re^{i\alpha}: \rho\leq r \leq R\},\quad BC=\{re^{i\beta}: \rho\leq r \leq R\}.\]
	\begin{center}
		\vspace{5mm}
		\begin{tikzpicture}[scale=1.25]
			\draw (2.81907,1.02606)-- (4.69846, 1.71010);
			\node [below] at (2.81907,1.02606) {$A$};
			\node [below right] at (4.69846, 1.71010) {$D$};
			\draw (0.52094, 2.95442)-- (0.86824, 4.92403);
			\node [above] at (0.86824, 4.92403) {$C$};
			\node [left] at (0.52094, 2.95442) {$B$};
			\draw (2.81907,1.02606) arc [radius=3, start angle=20, end angle=80];
			\draw (4.69846, 1.71010) arc [radius=5, start angle=20, end angle=80];
			\node at (2.57115,3.06417) {$S(R)$};
		\end{tikzpicture}
	\end{center}
	
	Orient the boundary $\partial S(R)$ counterclockwise. We can  write
	\[ \int_{\partial S(R)}f(z)dz =\int_{AD}+\int_{DC}+\int_{CB}+\int_{BA}=0, \]
	where the vanishing of the integral follows from part (c) of Proposition~\ref{prop-equivalent} above. 
	Parametrizing $AD$ by $z=re^{i\alpha}$ where $\rho \leq r \leq R$ , and using $dz=e^{i\alpha} dr$ 
	we get 
	\[ \int_{AD} f(z)dz =\int_{\rho}^R f(re^{i\alpha})e^{i\alpha}dr=e^{i\alpha}\int_{\rho}^R u(r)dr. \]
	Similarly, 
	\[ \int_{CB}f(z)dz=-e^{i\beta}\int_{\rho}^Ru(r)dr.\] 
	Now, parametrizing 
	$DC$ by $z=Re^{i\theta},  \alpha \leq \theta\leq \beta $, we have $dz=R ie^{i\theta}d\theta$, so 
	\[\int_{DC}f(z)dz =\int_{\theta=\alpha}^\beta f(R e^{i\theta})\cdot R i e^{i\theta}d\theta= Ru({R}) \int_{\alpha}^\beta ie^{i\theta}d\theta=Ru(R)(e^{i\beta}-e^{i\alpha}). \]
	Similarly,
	\[ \int_{BA} f(z)dz=\rho u(\rho)(e^{i\beta}-e^{i\alpha}).\]
	
	Therefore, adding the four integrals we have
	\[ \int_{\partial S}f(z)dz=\left({\rho}u({\rho})-Ru(R)+\int_{\rho}^R u(r)dr\right)(e^{i\alpha}-e^{i\beta})=0,\]
	so that we have the relation
	\begin{equation}
		\label{eq-id1}\int_{\rho}^R u(r)dr= Ru(R)- {\rho}u({\rho}),
	\end{equation}
	which can be written as
	\[u(R)= \frac{1}{R}\left(  \int_{\rho}^R u(r)dr+{\rho}u({\rho})\right). \]
	Since $u$ is continuous, the right hand side is a function of $R$ which is continuously 
	differentiable on $(\rho,1)$. Thus $u\in \mathcal{C}^1 (\rho,1)$.
	Differentiating both sides of \eqref{eq-id1} with respect to $R$
	we have $u(R)=Ru'(R)+u(R)$ so that 
	we have $u'(R)=0$ for $R\in (\rho,1)$. But $0<\rho<1$ can be chosen arbitrarily, so it follows that $u'\equiv 0$
	on $(0,1)$ and the result follows.
\end{proof}

\begin{proof}[Proof of Proposition~\ref{prop-fourier}]

	
	Let $n\in\Z$ and for simplicity of notation, let $f_n=\ppi^\sigma_n(f)$.
	Then by Proposition~\ref{prop-fourierproperties}, we see that $f_n$ lies in the Fourier mode
	$[\om(\D)]_n^\sigma$ so that $f_n\in \om(\D)$ and for $\lambda\in T, z\in \D$ we have
	$f_n(\lambda z)= \lambda^n f_n(z)$. Since clearly $f_n|_{\D\setminus\{0\}}\in \om(\D\setminus \{0\})$, it follows by Proposition~\ref{prop-gengoursat} that the function $h_n$ on 
	$\D\setminus\{0\}$ defined by
	\[ h_n=e_{-n}\cdot f_n\]
	lies in $\om(\D\setminus \{0\})$. Further for $\lambda\in T, z\in \D$ we have
	\[h_n(\lambda z)=e_n(\lambda z)f_n(\lambda z)= \lambda^{-n}e_{-n}(z)\cdot \lambda^n f_n(z)= h_n(z),\]
	so $h_n$ is radial, and hence by Lemma~\ref{lem-radial2}, $h_n$ is a constant, which we call $a_n$. Therefore, on $\D\setminus \{0\}$, we have $f_n=a_ne_n$, so the product $a_ne_n$ extends for each $n$ to a continuous function on $\D$. If $n<0$, this is possible only if $a_n=0$, and this completes the proof. 
\end{proof}

\subsection{Conclusion of the proof of Theorem~\ref{thm-morera}}
We will first show that the series $\sum_{n=0}^\infty a_ne_n$  converges absolutely in the 
space $\mathcal{C}(\D)$. It suffices to show that for each $0<r<1$, we have
\[ \sum_{n=0}^\infty p_r(a_ne_n)<\infty,\]
where $p_r$ is as in \eqref{eq-pr}. Fix an $r$ with $0<r<1$ and let $\rho$ be such that 
$r<\rho <1$. Applying the Cauchy estimate \eqref{eq-cauchy} to the seminorm $p_\rho$ we see that for each $n\in \Z$:
\[ p_\rho(\ppi^\sigma_n f)\leq p_\rho(f)\]
Thanks to Proposition~\ref{prop-fourier} we have for each $n\geq 0$,
\[ p_r(\ppi^\sigma_n f)= \sup_{\abs{z}\leq r} \abs{a_nz^n}= \abs{a_n}r^n \leq \left(\frac{r}{\rho}\right)^n \cdot\abs{a_n}\rho^n\leq \left(\frac{r}{\rho}\right)^n\cdot p_\rho(f).\]
Therefore we have
\[ \sum_{n=-\infty}^\infty{ p_r(\ppi^\sigma_n f)} = \sum_{n=0}^\infty{ p_r(\ppi^\sigma_n f)} \leq \left(\sum_{n=0}^\infty\left(\frac{r}{\rho}\right)^n \right)\cdot p_\rho(f)= \frac{1}{1-\frac{r}{\rho}} \cdot p_\rho(f)<\infty.\]
This proves that the series $\sum_{n=0}^{\infty} a_n e_n$ converges absolutely in $\mathcal{C}(\D)$. Let $g$ be its sum. Since the partial sums are all in $\om(\D)$ by Proposition~\ref{prop-en}, we see that $g \in \om(\D)$. However, it follows from Corollary \ref{cor-cesaro} that the sum of the series is $f$, therefore $f =g$, and the proof of the theorem is complete.

\subsection{Pompeiu's characterization of holomorphic functions}
Goursat's definition is non-quantitative, since it is framed in terms of the 
 \emph{existence} of the limit \eqref{eq-compder}, and  does not provide a  way to measure the degree of non-holomorphicity of a function. For example, if $\epsilon$ is  small and 
 nonzero,
all it says  about the functions $f(z)=z+\epsilon \ol{z}$ and $g(z)=\ol{z}+\epsilon {z}$ is that both are non-holomorphic.   Morera's definition of a holomorphic function does not have this shortcoming, as the integral $\int_{\partial T}f(z)dz$ gives a measure of 
the amount of non-holomorphicity of $f$ on the triangle $T$. 

It is possible to normalize and localize this measure of non-holomorphicity,
as was realized by Pompeiu (see \cite{pompeiu,mitrea}).  The quantity 
\begin{equation}
	\label{eq-average}\frac{1}{\abs{T}}\int_{\partial T} f(z)dz
\end{equation}
is a numerical measure of the ``average density  of non-holomorphicity" of a continuous function $f$ on a triangle (or other region with piecewise smooth boundary) $T$, where 
$\abs{T}$ denotes the area of  $T$.  To localize this, we can consider,
for a point $w\in \Omega$, and a continuous function $f:\Omega\to \cx$, the following limit 
(called  the ``areolar derivative" by Pompieu) as 
a measure of the degree of non-holomorphicity of $f$ at the point $w$:
\begin{equation}
	\label{eq-pompeiu}\lim_{T\downarrow w} \frac{1}{\abs{T}}\int_{\partial T} f(z)dz
\end{equation}
where the limit is taken over the family of triangles containing the point $w$ and contained in $\Omega$, as these
triangles shrink to the point $w$. Let us say that a function is \emph{Pompeiu-holomorphic} if the limit \eqref{eq-pompeiu}
exists at each $w\in \Omega$, and is equal to zero. The following simple and well-known observations 
clarify the meaning of this notion:
\begin{prop}\begin{enumerate}[wide, label=(\alph*)]
		\item A continuous function $f:\Omega\to \cx$ is Pompieu-holomorphic if and only if it is Morera-holomorphic.
		\item If the function $f$ is in $\mathcal{ C}^1(\Omega)$, then the limit \eqref{eq-pompeiu} exists, 
		and is equal to $\displaystyle{2i \frac{\partial f}{\partial \ol{z}}(w)}$.
	\end{enumerate}
	
\end{prop}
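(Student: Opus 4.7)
The plan is that both parts reduce to facts we already have in hand: part (a) follows from the classical Goursat-style subdivision already packaged as Lemma~\ref{lem-goursat}, while part (b) is the complex form of Green's theorem combined with the continuity of $\dbar f$.

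For part (a), the direction ``Morera-holomorphic $\Rightarrow$ Pompeiu-holomorphic'' is immediate: if $f\in \om(\Omega)$ then $\int_{\partial T}f(z)dz=0$ for every triangle $T\subset \Omega$, so the quotient in \eqref{eq-pompeiu} is identically zero, whence its limit is zero. For the converse, I would set
\[ \lambda(T)=\int_{\partial T} f(z)dz \]
for every triangle $T\subset \Omega$ and verify the hypotheses of Lemma~\ref{lem-goursat}. Additivity \eqref{eq-additive} follows from the orientation-cancellation of the shared interior edges when a triangle is cut into smaller triangles (this uses only the continuity of $f$, so that the contour integrals are well defined). The areolar hypothesis \eqref{eq-areolar1} is weaker than the Pompeiu-holomorphicity of $f$: the latter gives that $\lambda(T)/|T|\to 0$ as $T$ shrinks to $w$ through \emph{all} triangles containing $w$, and in particular through the subfamily of those similar to a fixed $\Delta_0$. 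Lemma~\ref{lem-goursat} then yields $\lambda\equiv 0$, i.e.\ $f\in \om(\Omega)$.

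For part (b), since $f\in \mathcal{C}^1(\Omega)$, I would invoke Green's theorem on the triangle $T$ applied to the $1$-form $f\, dz=f\,dx+if\,dy$. A direct computation gives $d(f\,dz)=2i\,\dbar f\,dx\wedge dy$, so that Stokes' theorem yields
\[ \int_{\partial T} f(z)\,dz \;=\; 2i\int_T \dbar f\,dA, \]
where $dA$ denotes planar Lebesgue measure. Dividing by $|T|$ and using the continuity of $\dbar f$ at $w$, the mean-value of $\dbar f$ over $T$ converges to $\dbar f(w)$ as $T\downarrow w$ (by uniform continuity on a compact neighborhood of $w$, or Lebesgue differentiation), giving
\[ \lim_{T\downarrow w}\frac{1}{|T|}\int_{\partial T} f(z)\,dz \;=\; 2i\,\dbar f(w). \]

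The proof is essentially routine given Lemma~\ref{lem-goursat}; the main subtlety I foresee is only bookkeeping in part (a), namely checking that the notion of limit used in Pompeiu's definition is at least as strong as the limit-along-similar-triangles required by Lemma~\ref{lem-goursat}, and ensuring that additivity of $\lambda$ is justified for a merely continuous $f$ (for which contour integrals are still perfectly well defined). No complex-analytic machinery beyond Stokes' theorem for a $\mathcal{C}^1$ form on a triangle is required in part (b).
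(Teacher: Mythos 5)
Your proposal is correct and follows essentially the same route as the paper: part (a) is handled by applying Lemma~\ref{lem-goursat} to $\lambda(T)=\int_{\partial T}f(z)\,dz$ (with the easy direction being immediate from the definition), and part (b) by Stokes' theorem applied to $f\,dz$ followed by continuity of $\dbar f$ at $w$. The extra care you take in checking that the Pompeiu limit over all triangles implies the limit condition along similar triangles, and that additivity holds for merely continuous $f$, is sound bookkeeping that the paper leaves implicit.
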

\begin{proof} For part (a), if  $ f$ is Morera-holomorphic, then the quantity \eqref{eq-average} vanishes for each triangle $T$, so
	the limit \eqref{eq-pompeiu} vanishes. Conversely, suppose that the limit \eqref{eq-pompeiu} vanishes at each point $w\in \Omega$, and let $T$ be a triangle 
in $\Omega$.  In Lemma~\ref{lem-goursat}, if we take
$\lambda(\Delta)=\int_{\partial \Delta}f(z)dz$, then the additivity condition \eqref{eq-additive} is clear, and the limit condition \eqref{eq-areolar1} holds by hypothesis. 
The result follows. 

For part (b), using Stoke's theorem
\begin{align*}\frac{1}{\abs{T}}\int_{\partial T} f(z)dz&=\frac{1}{\abs{T}}\int_{\partial T} f(z)dx+i f(z)dy= \frac{1}{\abs{T}}\int_T \left(i \frac{\partial f}{\partial x}(z)- 
	\frac{\partial f}{\partial y}(z)\right)dx\wedge dy\\&= 2i \int_T\frac{\partial f}{\partial \ol{z}}dx\wedge dy. \end{align*}

Since the last integral has a continuous integrand, we may take the limit as the triangle $T$ shrinks to the point $w$ to obtain:
\[ \lim_{T\downarrow w} \frac{1}{\abs{T}}\int_{\partial T} f(z)dz= 2i \frac{\partial f}{\partial \ol{z}}(w).\]
\end{proof}


\bibliographystyle{alpha}
\bibliography{power}
\end{document}